\newtheorem{lemma}{Lemma}[section]
\newtheorem{theorem}[lemma]{Theorem}
\newtheorem{proposition}[lemma]{Proposition}
\newtheorem{prop}[lemma]{Proposition}
\newtheorem{cor}[lemma]{Corollary}
\newtheorem{conj}[lemma]{Conjecture}
\newtheorem{claim*}{Claim}
\newtheorem{thm}[lemma]{Theorem}
\theoremstyle{definition}
\newtheorem{remark}[lemma]{Remark}
\newtheorem{definition}[lemma]{Definition}
\newcommand{\PP}{{\mathbb P}}
\newcommand{\C}{{\mathbb C}}
\newcommand{\F}{{\mathbb F}}
\newcommand{\Q}{{\mathbb Q}}
\newcommand{\Z}{{\mathbb Z}}
\newcommand{\NN}{{\mathbb N}}
\newcommand{\Qbar}{{\overline{\Q}}}
\newcommand{\Zhat}{{\hat{\Z}}}
\newcommand{\kbar}{{\overline{k}}}
\newcommand{\kk}{{\mathbf k}}
\newcommand{\calE}{{\mathcal E}}
\newcommand{\frakm}{{\mathfrak m}}
\newcommand{\mm}{{\mathfrak m}}
\DeclareMathOperator{\Sym}{Sym}
\DeclareMathOperator{\lcm}{lcm}
\DeclareMathOperator{\im}{im}
\DeclareMathOperator{\Aut}{Aut}
\DeclareMathOperator{\Gal}{Gal}
\DeclareMathOperator{\divv}{div}
\DeclareMathOperator{\Pic}{Pic}
\DeclareMathOperator{\Jac}{Jac}
\DeclareMathOperator{\PGL}{PGL}
\DeclareMathOperator{\Mat}{M}
\DeclareMathOperator{\SL}{SL}
\DeclareMathOperator{\PSL}{PSL}
\DeclareMathOperator{\GL}{GL}
\DeclareMathOperator{\Gl}{GL}
\DeclareMathOperator{\gon}{gon}
\DeclareMathOperator{\Supp}{Supp}
\newcommand{\isom}{\cong}
\newcommand{\defPisolated}{\PP^1\textup{\defi{-isolated}}}
\newcommand{\Pisolated}{\PP^1\textup{-isolated}}
\DeclareMathOperator{\defAVisolated}{\defi{AV-isolated}}
\DeclareMathOperator{\AVisolated}{{AV-isolated}}
\numberwithin{equation}{section}
\numberwithin{table}{section}
\newcommand{\defi}[1]{\textsf{#1}} 
\newcommand{\BB}[1]{B^1_{#1}}
\newcommand{\Snoell}{S-\{\ell\}}
\title{On the level of modular curves that give rise to isolated $j$-invariants}
\author{Abbey Bourdon}
\address{Wake Forest University, Department of Mathematics and Statistics, Winston-Salem, NC 27109, USA}
\email{bourdoam@wfu.edu}
\urladdr{http://users.wfu.edu/bourdoam/}
\author{\"Ozlem Ejder}
\address{Colorado State University, Department of Mathematics, Fort Collins, CO 80524, USA}
\email{ejder@math.colostate.edu}
\urladdr{https://sites.google.com/site/ozheidi/Home}
\author{Yuan Liu}
\address{University of Michigan, Department of Mathematics, Ann Arbor, MI 48109, USA}
\email{yyyliu@umich.edu}
\urladdr{https://sites.google.com/view/yuanliu-homepage}
\author{Frances Odumodu}
\address{Institut de Math{\'e}matiques de Bordeaux, Universit{\'e}  de Bordeaux, 
351 cours de la Lib{\'e}ration,
33405 Talence, cedex France; and \newline
Dipartimento di Matematica Pura ed Applicata, Universit\`a degli Studi di Padova,
Via Trieste 63, 35121, Padova, Italy}
\email{francesodumodu@gmail.com}
\author{Bianca Viray}
\address{University of Washington, Department of Mathematics, Box 354350, Seattle, WA 98195, USA}
\email{bviray@uw.edu}
\urladdr{http://math.washington.edu/~bviray}
\begin{document}

  \begin{abstract}
    We say a closed point $x$ on a curve $C$ is sporadic if $C$ has only finitely many closed points of degree at most $\deg(x)$ and {that $x$ is isolated if it is not in a family of effective degree $d$ {divisors} parametrized by $\PP^1$ or a positive rank abelian variety (see Section~\ref{sec:MapsOfCurves} for more precise definitions and a proof that sporadic points are isolated).}  Motivated by well-known classification problems concerning rational torsion of elliptic curves, we study sporadic {and isolated} points on the modular curves $X_1(N)$. In particular, we show that any non-cuspidal non-CM sporadic, {respectively isolated,} point $x \in X_1(N)$ maps down to a sporadic, {respectively isolated,} point on a modular curve $X_1(d)$, where $d$ is bounded by a constant depending only on $j(x)$. Conditionally, we show that $d$ is bounded by a constant depending only on the degree of $\Q(j(x))$, so in particular there are only finitely many $j$-invariants of bounded degree that give rise to sporadic {or isolated} points.  
	 \end{abstract}

	\maketitle
	
\section{Introduction}

Let $E$ be an elliptic curve over a number field $k$.  It is well-known that the torsion subgroup $E(k)_{\textup{tors}}$ is a finite subgroup of $(\Q/\Z)^2$.  In 1996, Merel \cite{Merel}, building on work of Mazur~\cite{Mazur} and Kamienny~\cite{Kamienny92}, proved the landmark uniform boundedness theorem: that for any positive integer $d$, there exists a constant $B = B(d)$ such that for all number fields $k$ of degree at most $d$ and all elliptic curves $E/k$, 
\[
  \#E(k)_{\textup{tors}} \leq B(d).
\]
Merel's theorem can equivalently be phrased as a statement about closed points on modular curves: that for any positive integer $d$, there exists a constant $B' = B'(d)$ such that for $n> B'$, the modular curve $X_1(n)/\Q$ has no non-cuspidal degree $d$ points.

Around the same time as Merel's work, Frey~\cite{Frey-InfinitelyManyDegreed} observed that Faltings's theorem implies that an arbitrary curve $C$ over a number field $k$ can have infinitely {many points of degree at most $d$} if and only if these infinitely many points are parametrized by $\PP^1_k$ or a positive rank subabelian variety of $\Jac(C)$.\footnote{{While Frey assumes that $C$ has a $k$-point, an inspection of the proof reveals that this is needed only to obtain a $k$-morphism $\Sym^d(C) \to \Jac(C)$.  Since the existence of a degree $d$ point also guarantees the existence of a suitable such morphism, the hypothesis on the existence of a rational point can be removed.}}  From this, Frey deduced that if a curve $C/k$ has infinitely many degree $d$-points, then the $k$-gonality of the curve\footnote{The $k$-gonality of a curve $C$ is the minimal degree of a $k$-rational map $\phi\colon C \to \PP^1_k$.} must be at most $2d$.  Frey's criterion combined with Abramovich's lower bound on the gonality of modular curves~\cite{Abramovich-Gonality} immediately shows that there exists a (computable!) constant $B'' = B''(d)$ such that for $n> B''$, the modular curve $X_1(n)/\Q$ has only finitely many degree $d$ points, {or in other words, that for $n> B''$ all degree $d$ points on $X_1(n)$ are \defi{sporadic}.\footnote{A closed point $x$ on a curve $C$ is sporadic if $C$ has only finitely many points of degree at most $\deg(x)$.}}  {Thus, the strength of the uniform boundedness theorem is in controlling the existence of sporadic points of bounded degree on $X_1(n)$ {as $n$ tends to infinity}.  }

In this paper, we study sporadic points {and, more generally, \defi{isolated}\footnote{A closed point $x$ on a curve $C$ is isolated if it is not contained in a family of effective degree $d$ divisors parametrized by $\PP^1$ or a positive rank abelian variety.  See Definition~\ref{def:Isolated} for more details.} points} of arbitrary degree, focusing particularly on such points corresponding to non-CM elliptic curves. We prove that non-CM non-cuspidal sporadic, {respectively isolated, }points on $X_1(n)$ map to sporadic, {respectively isolated,} points on $X_1(d)$, for $d$ some bounded divisor of $n$. 
\begin{theorem}\label{thm:UnconditionalMain}
  Fix a non-CM elliptic curve $E$ over $k$, and let $m$ be an integer divisible by $2,3$ and all primes $\ell$ where the $\ell$-adic Galois representation of $E$ is not surjective. Let $M = M(E, m)$ be the level of the $m$-adic Galois representation of $E$ and let $f$ denote the natural map $X_1(n) \to X_1(\gcd(n,M))$.
  If $x\in X_1(n)$ is  sporadic, {respectively isolated}, with $j(x) = j(E)$, then $f(x) \in X_1(\gcd(n,M))$ is  sporadic, {respectively isolated}.
\end{theorem}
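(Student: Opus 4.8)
The plan is to deduce both statements from the single geometric fact that $f^{-1}(f(x)) = \{x\}$ and $f$ is unramified at $x$; equivalently, that $f^{*}[f(x)] = [x]$ as divisors on $X_1(n)$, and in particular $\deg(x) = \deg(f)\cdot\deg(f(x))$. Granting this, the isolated case follows by contraposition. If $f(x)$ is not isolated then $[f(x)]$ lies in the image of a nonconstant morphism $\phi\colon T \to \Sym^{e}(X_1(\gcd(n,M)))$, with $e = \deg(f(x))$ and $T$ either $\PP^1$ or a positive rank abelian variety; say $\phi(t_0) = [f(x)]$. Pullback of divisors along the finite flat map $f$ induces a morphism $f^{*}\colon \Sym^{e}(X_1(\gcd(n,M))) \to \Sym^{\deg(f)e}(X_1(n)) = \Sym^{\deg(x)}(X_1(n))$ (the last equality by the identity above) which is injective on points, since $f_{*}f^{*} = \deg(f)\cdot\mathrm{id}$; hence $f^{*}\circ\phi$ is again nonconstant and $(f^{*}\circ\phi)(t_0) = f^{*}[f(x)] = [x]$, so $x$ is not isolated. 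The sporadic case is similar: were $f(x)$ not sporadic, there would be infinitely many closed points $y$ on $X_1(\gcd(n,M))$ with $\deg(y) \le \deg(f(x))$, and choosing a point of $f^{-1}(y)$ above each would yield infinitely many closed points of $X_1(n)$ of degree at most $\deg(f)\cdot\deg(f(x)) = \deg(x)$, contradicting that $x$ is sporadic. (It is the \emph{equality} $\deg(f)\deg(f(x)) = \deg(x)$, not merely the inequality $\deg(x) \le \deg(f)\deg(f(x))$, that makes the sporadic argument go.)

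Thus the real content is proving $f^{-1}(f(x)) = \{x\}$ with $f$ unramified at $x$, and this is where the hypotheses on $m$ and $M$ enter. Unramifiedness is easy: as $x$ is non-cuspidal and $j(x) = j(E) \notin \{0,1728\}$ (since $E$ is non-CM), $f$ is étale at $x$ by the standard description of the ramification of these degeneracy maps. For the rest, write $n = n_1 n_2$ with $n_1$ supported on the primes dividing $m$ and $\gcd(n_2,m) = 1$, so $\gcd(n,M) = \gcd(n_1,M) =: d_0$, and pass to the moduli description over $\Qbar$: the closed points of $X_1(n)$ lying over $f(x)$ correspond to the orbits, under $\rho_{E,n}(G_{\kappa(f(x))})$ together with the $\pm1$-action, of the set of $P \in E$ of exact order $n$ with $(n/d_0)P$ equal to the order-$d_0$ point underlying $f(x)$; so it suffices to prove this action is transitive. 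The decisive point is that, because $M$ is the level of the $m$-adic representation, $\rho_{E,n_1}(G_k)$ is the full preimage of $\rho_{E,d_0}(G_k)$ under $\GL_2(\Z/n_1) \to \GL_2(\Z/d_0)$; this uses that $\GL_2(\Z/\lcm(n_1,M))$ is the fiber product of $\GL_2(\Z/n_1)$ and $\GL_2(\Z/M)$ over $\GL_2(\Z/d_0)$, so no divisibility $M \mid n_1$ is needed. It follows that the subgroup of $\rho_{E,n}(G_k)$ acting trivially modulo $d_0$ — which is contained in $\rho_{E,n}(G_{\kappa(f(x))})$, since such automorphisms fix $E[d_0]$ and hence $f(x)$ — surjects onto the whole congruence kernel $\ker\bigl(\GL_2(\Z/n_1) \to \GL_2(\Z/d_0)\bigr)$ on the $n_1$-part, and an elementary computation shows this kernel already acts transitively on the order-$n_1$ lifts of a primitive vector. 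The complementary $n_2$-part is controlled using that the $\ell$-adic representation is surjective for each prime $\ell \mid n_2$ and that there is no entanglement among these primes, or between $E[n_2]$ and $E[n_1]$ — which is exactly what the hypotheses that $2,3$ and every prime of non-surjective reduction divide $m$ guarantee — so that $\rho_{E,n}(G_k) = \rho_{E,n_1}(G_k) \times \rho_{E,n_2}(G_k)$ and the two transitivity statements combine.

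I expect the transitivity statement — and within it the entanglement bookkeeping, i.e. checking that the $n_2$-part of the Galois image is the full product $\prod_{\ell \mid n_2}\GL_2(\Z/\ell^{v_\ell(n)})$ and is linearly disjoint over $k$ from the field cut out by the $n_1$-part — to be the main obstacle. A secondary technical point is making the moduli description of $f^{-1}(f(x))$ precise over $\Qbar$: one must track quadratic and higher twists and the $\pm1$-action, since $X_1(N)$ classifies pairs $(E',P)$ only up to isomorphism, in order to conclude that a single Galois orbit of torsion points corresponds to a single closed point. Neither affects the formal deduction of the first paragraph, which goes through verbatim once $f^{-1}(f(x)) = \{x\}$ is established.
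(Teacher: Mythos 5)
Your overall strategy tracks the paper's: reduce to the degree formula $\deg(x) = \deg(f)\deg(f(x))$ --- equivalently, that the scheme-theoretic fiber $f^{-1}(f(x))$ over $\kk(f(x))$ is a single reduced point --- and then push forward. This is precisely the paper's decomposition into Theorem~\ref{thm:PushingForwardSporadicIsolated} and Theorem~\ref{thm:DegreeOfMapFixedjInvariant}; your first paragraph essentially reinvents the former, your reduction to transitivity of a Galois action is Lemma~\ref{lem:LargeGaloisImageDegreeOfMap}, and your fiber-product argument for the $n_1$-part is Proposition~\ref{prop:FixedSupportAndLevel}. Up to that point you and the paper are on the same path.

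The gap is in the $n_2$-part. The claim that the hypotheses guarantee ``no entanglement among these primes, or between $E[n_2]$ and $E[n_1]$,'' so that $\rho_{E,n}(G_k) = \rho_{E,n_1}(G_k) \times \rho_{E,n_2}(G_k)$, is false in general. Even when $\rho_{E,\ell^\infty}$ is surjective for every $\ell \mid n_2$ (e.g., for a Serre curve over $\Q$), the adelic image has index at least $2$ in $\GL_2(\Zhat)$, coming from $\Q(\sqrt{\Delta_E})\subset \Q(E[2])$ and $\Q(\sqrt{\Delta_E})\subset \Q(\mu_m)\subset\Q(E[m])$ for some odd $m$; if a prime of $\Supp(n_2)$ divides the squarefree part of $\Delta_E$, this is exactly an entanglement between $E[n_1]$ (which contains $E[2]$) and $E[n_2]$, so the product decomposition fails. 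What the paper proves in its place --- and this is the main technical content of Section~\ref{sec:AdelicGroups} --- is the strictly weaker statement that for $\ell\notin S_E$ the kernel of $\im\rho_{E,a\ell^s}\to\im\rho_{E,a}$ contains $\SL_2(\Z/\ell^s\Z)$ (Proposition~\ref{prop:surjectiveSL}). That is still enough for the transitivity you want, because $\SL_2(\Z/\ell^s\Z)$ is already transitive on primitive vectors of $(\Z/\ell^s\Z)^2$; and, crucially, it tolerates the determinant-level entanglement that actually occurs. Proving the $\SL_2$-containment is not free: it needs the subquotient classification that $\PSL_2(\F_\ell)$ for $\ell>5$ (Cojocaru--Kani) and $\PGL_2(\F_5)$ (Lemma~\ref{lem:Subquo}, which the paper points out is new for $\ell=5$) cannot appear as subquotients of $\GL_2(\Z/n\Z)$ unless $\ell\mid n$ --- which is also the reason $S_E$ demands $\rho_{E,5^\infty}=\GL_2(\Z_5)$ rather than merely $\supset\SL_2(\Z_5)$. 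To repair your proof, replace the product decomposition with this $\SL_2$-containment and the prime-by-prime counting argument of Proposition~\ref{prop:EliminatingSurjectivePrimes}.
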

{For many elliptic curves, we may take both $m$ and $M$ to be quite small.  For instance, let $\calE$ be the set of elliptic curves over $\Q$ where the $\ell$-adic Galois representation is surjective for all $\ell> 3$ and where the $6$-adic Galois representation has level dividing $24$.  Note that $\calE$ contains all Serre curves~\cite[Proof of Prop. 22]{Serre-OpenImage} (that is, elliptic curves over $\Q$ whose adelic image is of index 2 in $\GL_2(\Zhat)$, which is as large as possible) and hence contains almost all elliptic curves over $\Q$ when counted according to height~\cite[Theorem 4]{Jones10}.  For $E \in \calE$, we may apply Theorem~\ref{thm:UnconditionalMain} with $m = 6$ and $M|24$.

The curve $X_1(24)$ has infinitely many quartic points, but no rational or quadratic points, nor cubic points corresponding to elliptic curves over $\Q$~\cites{Mazur, KenkuMomoseQuadratic, Morrow}. Therefore $X_1(24)$ has no sporadic points with $\Q$-rational $j$-invariant.  For $M$ a proper divisor of $24$, the curves $X_1(M)$ have genus $0$, and so also have no sporadic points. Hence Theorem~\ref{thm:UnconditionalMain} yields the following corollary.
\begin{cor}
    For all $n$, there are no sporadic points on $X_1(n)$ corresponding to elliptic curves in $\calE$. In particular, there are no sporadic points corresponding to Serre curves.
\end{cor}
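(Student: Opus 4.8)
The plan is to apply Theorem~\ref{thm:UnconditionalMain} to the curves in $\calE$ with the explicit choice $m=6$, and then to invoke the known classification of low-degree points on $X_1(d)$ for the divisors $d$ of $24$.

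First I would fix $E\in\calE$ and record that the hypotheses of Theorem~\ref{thm:UnconditionalMain} hold with $m=6$: since $E\in\calE$ has surjective $\ell$-adic Galois image for every prime $\ell>3$, the primes of non-surjectivity lie in $\{2,3\}$, so $m=6$ is divisible by $2$, $3$, and all of them; and by the definition of $\calE$ the level $M=M(E,6)$ of the $6$-adic representation divides $24$. Now suppose $x\in X_1(n)$ is sporadic with $j(x)=j(E)$ for some $E\in\calE$. Such an $x$ is non-cuspidal (as $j(x)\neq\infty$) and non-CM (a curve in $\calE$ cannot have CM, since a CM elliptic curve has non-surjective $\ell$-adic image for every prime $\ell>3$), so Theorem~\ref{thm:UnconditionalMain} produces a sporadic point $f(x)\in X_1(d)$ with $d:=\gcd(n,M)\mid 24$ and $j(f(x))=j(E)\in\Q$.

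Next I would rule out each possible value of $d$. If $d$ is a proper divisor of $24$ then $d\le 12$, so $X_1(d)$ has genus $0$ and, having a rational cusp, satisfies $X_1(d)\cong\PP^1_\Q$; hence $X_1(d)$ has infinitely many closed points of every degree and cannot carry a sporadic point. If $d=24$, then $f(x)$ is a sporadic closed point of $X_1(24)$ with rational $j$-invariant; since $X_1(24)$ has no non-cuspidal points of degree $1$ or $2$ and no cubic point corresponding to an elliptic curve over $\Q$ \cites{Mazur, KenkuMomoseQuadratic, Morrow}, but does have infinitely many quartic points, the degree of $f(x)$ must be at least $4$, whence $X_1(24)$ has infinitely many closed points of degree at most $\deg(f(x))$ and $f(x)$ is not sporadic. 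Either way we reach a contradiction, so no $X_1(n)$ carries a sporadic point with $j$-invariant equal to that of a curve in $\calE$; the statement for Serre curves then follows because $\calE$ contains every Serre curve, as recalled above using \cite{Serre-OpenImage}.

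I do not expect any genuine analytic or geometric difficulty here. The one place to be careful is the bookkeeping around Theorem~\ref{thm:UnconditionalMain} — that the level of the target curve is $\gcd(n,M)$, a divisor of $24$, rather than $M$ or $n$ — together with confirming that the cited torsion results really do cover all of degrees $1$, $2$, and $3$ on $X_1(24)$ for points of rational $j$-invariant, so that the infinitude of quartic points (equivalently, the gonality bound) disposes of every remaining degree.
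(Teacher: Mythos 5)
Your proof is correct and follows essentially the same route as the paper's: apply Theorem~\ref{thm:UnconditionalMain} with $m=6$ and $M\mid 24$, observe that $X_1(d)\cong\PP^1_\Q$ for proper divisors $d$ of $24$, and use the absence of low-degree non-cuspidal points on $X_1(24)$ with rational $j$-invariant together with its infinitude of quartic points. The extra verifications you spell out (that $E\in\calE$ is automatically non-CM, that $x$ is non-cuspidal, and the gonality/quartic-point bookkeeping) are all sound and are exactly the facts the paper leaves implicit.
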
}
{In addition to giving strong control on sporadic points over a fixed $j$-invariant, we are also able to use Theorem~\ref{thm:UnconditionalMain} to derive a uniform version that is 
conditional on a folklore conjecture motivated by a question of Serre.}
\begin{conj}[Uniformity Conjecture]\label{conj:Uniformity}
  Fix a number field $k$.  There exists a constant $C = C(k)$ such that for {all non-CM} elliptic curves $E/k$, the mod-$\ell$ Galois representation of $E$ is surjective for all $\ell > C$.
\end{conj}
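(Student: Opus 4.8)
The Uniformity Conjecture is a famous open problem, so what follows is a sketch of the standard line of attack together with the point at which it currently stalls, rather than a complete argument.

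First I would reduce to a question about rational points on modular curves via the group theory of $\GL_2(\F_\ell)$. After enlarging $C(k)$ so that the mod-$\ell$ cyclotomic character $\det\circ\overline{\rho}_{E,\ell}\colon G_k \to \F_\ell^\times$ is surjective --- which only excludes the finitely many primes $\ell$ for which $k$ meets $\Q(\zeta_\ell)$ nontrivially --- Serre's classification of subgroups of $\GL_2(\F_\ell)$ (\cite{Serre-OpenImage}) shows that if $\overline{\rho}_{E,\ell}$ is not surjective then its image lies in a Borel subgroup, in the normalizer of a split Cartan subgroup, in the normalizer of a non-split Cartan subgroup, or in an \emph{exceptional} subgroup (one whose image in $\PGL_2(\F_\ell)$ is $A_4$, $S_4$, or $A_5$). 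The exceptional case is immediate: such an image has order at most $120$, so by Serre's bounds it forces $\ell$ to be bounded in terms of $[k:\Q]$ alone and can be absorbed into $C(k)$.

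Second, each surviving case says that $E$ gives a non-cuspidal, non-CM $k$-rational point on one of the modular curves $X_0(\ell)$ (Borel), $X_{\mathrm{sp}}^+(\ell)$ (normalizer of split Cartan), or $X_{\mathrm{ns}}^+(\ell)$ (normalizer of non-split Cartan), and the goal becomes to show that none of these curves has such a point once $\ell$ exceeds a bound depending only on $k$. Over $k = \Q$ the Borel case is Mazur's theorem ($\ell \le 37$) and the split Cartan case is a theorem of Bilu--Parent and Bilu--Parent--Rebolledo. For a general number field one would instead feed Abramovich's linear lower bound on the gonality of modular curves (\cite{Abramovich-Gonality}) into Frey's criterion (recalled in the introduction, \cite{Frey-InfinitelyManyDegreed}): the $k$-gonality of $X_0(\ell)$, $X_{\mathrm{sp}}^+(\ell)$, and $X_{\mathrm{ns}}^+(\ell)$ grows linearly in $\ell$, so for $\ell$ large relative to $[k:\Q]$ each of these curves has only finitely many points of degree $[k:\Q]$. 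The remaining subtlety is that gonality bounds do not see sporadic points, only infinite families --- precisely the phenomenon controlled by Theorem~\ref{thm:UnconditionalMain} --- so one would want an effective, $k$-uniform strengthening of that theorem to close the gap.

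The real obstacle is the normalizer of non-split Cartan case: bounding $X_{\mathrm{ns}}^+(\ell)(k)$ \emph{uniformly in} $\ell$. Unlike $X_{\mathrm{sp}}^+(\ell)$, these curves carry no convenient intermediate quotient, their Jacobians are not known to have controllable Mordell--Weil rank, and even over $\Q$ the set $X_{\mathrm{ns}}^+(\ell)(\Q)$ has only been determined for a handful of small $\ell$ (most recently $\ell = 13$, via quadratic Chabauty). I expect that a complete proof would require either a method to provably compute $X_{\mathrm{ns}}^+(\ell)(k)$ for all $\ell$ at once --- for instance a Chabauty--Kim argument with bounds uniform in the level, or an unconditional constraint on the Mordell--Weil rank of a well-chosen isogeny factor of $\Jac(X_{\mathrm{ns}}^+(\ell))$ --- or an altogether different input, such as a height bound on $j$-invariants with non-surjective $\overline{\rho}_{E,\ell}$ obtained by transcendence or Diophantine-approximation methods. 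In short, the group-theoretic reduction and the large-$\ell$ gonality input are classical and robust; the genuine difficulty is concentrated in the arithmetic of the non-split Cartan modular curves.
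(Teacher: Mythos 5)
You have not been asked to prove a theorem here: the statement is Conjecture~\ref{conj:Uniformity}, which the paper does not prove and does not claim to prove. It is assumed as a hypothesis (in Theorem~\ref{thm:Main} and its corollary), and the paper's only commentary on its status is Remark~\ref{rmk:ConjForQ}, recalling Serre's question and the $C=37$ conjectures of Zywina and Sutherland over $\Q$. So there is no proof in the paper against which to compare your argument, and your decision to present a reduction-plus-obstruction survey rather than a purported proof is exactly the right call.

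As a survey, what you wrote is accurate and matches the standard picture: after discarding finitely many $\ell$ to make the determinant surjective, Serre's classification puts a non-surjective image inside a Borel, the normalizer of a split or non-split Cartan, or an exceptional subgroup; the exceptional case is eliminated for large $\ell$ (one small correction: the projective image is $A_4$, $S_4$, or $A_5$, hence of order at most $60$, not $120$, and the elimination for $\ell$ large relative to $[k:\Q]$ really comes from Serre's analysis of the image of tame inertia at primes above $\ell$ rather than a pure counting bound); the Borel and split Cartan cases are theorems over $\Q$ (Mazur, Bilu--Parent--Rebolledo); and you correctly flag that Abramovich plus Frey only yields finiteness of degree-$[k:\Q]$ points for large $\ell$, not emptiness, which is precisely the sporadic/isolated phenomenon this paper is organized around. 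Your identification of $X_{\mathrm{ns}}^+(\ell)$ as the genuine bottleneck is also the accepted consensus. None of this constitutes a proof, but no proof is expected, and nothing in your sketch misrepresents the difficulty.
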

\begin{conj}[Strong Uniformity Conjecture]\label{conj:StrongUniformity}
  Fix a positive integer $d$.  There exists a constant $C = C(d)$ such that for all degree $d$ number fields $k$ and all {non-CM} elliptic curves $E/k$, the mod-$\ell$ Galois representation of $E$ is surjective for all $\ell > C$.
\end{conj}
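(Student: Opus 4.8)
The Strong Uniformity Conjecture is a quantitative form of Serre's uniformity problem over degree-$d$ fields: it is known for $d = 1$ by the combined work of Mazur (Borel case), Bilu--Parent and Bilu--Parent--Rebolledo (split Cartan case), and Le Fourn--Lemos together with the quadratic Chabauty computation of Balakrishnan--Dogra--M\"uller--Tuitman--Vonk on the ``cursed curve'' $X_{\mathrm{ns}}^+(13)$ (non-split Cartan case), but it is open for $d \geq 2$. Since Conjecture~\ref{conj:Uniformity} is the special case where $k$ is fixed and $d = [k:\Q]$, it suffices to treat Conjecture~\ref{conj:StrongUniformity}. The natural line of attack is as follows.

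\textbf{Step 1: reduction to modular curves.} Fix $d$, and suppose $E$ is a non-CM elliptic curve over a number field $k$ of degree at most $d$ whose mod-$\ell$ Galois representation $\overline{\rho}$ is not surjective onto $\GL_2(\F_\ell)$. Because the Weil pairing forces $\det\circ\overline{\rho}$ to be surjective, $\overline{\rho}$ is surjective as soon as its image contains $\SL_2(\F_\ell)$; hence by Dickson's classification of subgroups of $\PGL_2(\F_\ell)$, the image of $\overline{\rho}$ is, after conjugation, contained in a Borel subgroup, the normalizer of a split or non-split Cartan subgroup, or an exceptional subgroup with projective image $A_4$, $S_4$, or $A_5$. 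Serre's treatment of the exceptional cases shows they are irrelevant for $\ell$ larger than an absolute bound. Thus for such $\ell$, non-surjectivity produces a non-cuspidal $k$-rational point --- hence a closed point of degree at most $d$ --- on one of $X_0(\ell)$, $X_{\mathrm{s}}^+(\ell)$ (the curve for the normalizer of a split Cartan), or $X_{\mathrm{ns}}^+(\ell)$ (the curve for the normalizer of a non-split Cartan). So it is enough to produce $C = C(d)$ such that for $\ell > C$ none of these three curves has a non-cuspidal, non-CM point of degree at most $d$.

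\textbf{Step 2: finiteness from gonality.} By Abramovich's lower bound, the gonality over $\overline{\Q}$, hence over every number field, of each of $X_0(\ell)$, $X_{\mathrm{s}}^+(\ell)$, and $X_{\mathrm{ns}}^+(\ell)$ grows at least linearly in $\ell$, so once it exceeds $2d$ Frey's criterion (via Faltings) shows that each of these curves has only finitely many --- indeed an effectively bounded number of --- closed points of degree at most $d$. This reduces Conjecture~\ref{conj:StrongUniformity} to ruling out, for all sufficiently large $\ell$, that any point on this finite list is simultaneously non-cuspidal and non-CM.

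\textbf{Step 3: eliminating the remaining points uniformly in $k$, and the main obstacle.} For $X_0(\ell)$ one would adapt Mazur's Eisenstein-descent argument: the cuspidal divisor generates a large torsion subgroup of $J_0(\ell)$, and one shows via Kolyvagin--Logachev that the winding quotient has Mordell--Weil rank zero over $k$, so that a formal-immersion argument forces every degree-$\leq d$ point to be cuspidal or CM; for $X_{\mathrm{s}}^+(\ell)$ and $X_{\mathrm{ns}}^+(\ell)$ one uses the analogous formalism of Momose and Bilu--Parent--Rebolledo, supplemented by $p$-adic (quadratic Chabauty) methods for the finitely many small levels. The essential difficulty --- and the reason the conjecture is open for $d \geq 2$ --- is that all of these inputs must hold \emph{uniformly over all number fields $k$ of degree at most $d$}: one needs a bound depending only on $d$ for the Mordell--Weil rank of the winding quotient over such $k$, control of its torsion, and control of the reduction of the hypothetical points at auxiliary primes. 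Over $\Q$ this program took decades, and no current technique makes the rank-zero (or Chabauty) input uniform over all degree-$d$ fields for any $d \geq 2$; supplying such an argument is precisely the content of the conjecture. A softer, purely geometric route --- combining the linear growth of gonality with Merel's uniform torsion bound and Frey--Mazur-type height estimates --- can in principle bound the exceptional points for each fixed $\ell$, but converting this into a single bound $C(d)$ valid for all large $\ell$ simultaneously again seems to require a genuinely new idea.
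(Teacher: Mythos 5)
The statement you were asked about is a \emph{conjecture}, not a theorem: the paper does not prove Conjecture~\ref{conj:StrongUniformity} (or even Conjecture~\ref{conj:Uniformity}); it only assumes them as hypotheses in Theorem~\ref{thm:Main}. So there is no proof in the paper to compare against, and your proposal --- rightly --- is not a proof either. You sketch the standard reduction (Dickson's classification, passage to $X_0(\ell)$, $X_{\mathrm{s}}^+(\ell)$, $X_{\mathrm{ns}}^+(\ell)$, gonality growth plus Frey--Faltings, then Mazur/Momose/Bilu--Parent-style formal-immersion and rank-zero arguments) and then concede that the decisive input --- uniformity of the rank-zero or Chabauty step over all degree-$d$ fields --- is unavailable. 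That concession is correct and is exactly why the statement remains open; but it means the proposal establishes nothing beyond a survey of a plausible attack, and it should not be presented as progress toward a proof.

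One factual correction: you assert the conjecture ``is known for $d=1$.'' It is not. Even over $\Q$, Serre's uniformity question is open in the normalizer-of-non-split-Cartan case for general $\ell$; the resolution of $X_{\mathrm{ns}}^+(13)$ by quadratic Chabauty handles a single level, and the results of Mazur, Bilu--Parent--Rebolledo, and Lemos/Le~Fourn--Lemos cover the Borel, split Cartan, and certain mixed cases only. The paper itself records this in Remark~\ref{rmk:ConjForQ}: the value $C=37$ over $\Q$ is a \emph{conjecture} of Zywina and Sutherland, not a theorem. So the $d=1$ case cannot be invoked as settled, and your Step~3 obstacle applies already there for the non-split Cartan curves, not only for $d\geq 2$.
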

\begin{remark}\label{rmk:ConjForQ}
  {Conjecture~\ref{conj:Uniformity} when $k = \Q$, or equivalently Conjecture~\ref{conj:StrongUniformity} when $d=1$, is the case originally considered by Serre~\cite[\S4.3]{Serre-OpenImage}.  In this case, Serre asked whether $C$ could be taken to be $37$~\cite[p.399]{Serre-37}. The choice $C=37$ has been formally conjectured by Zywina~\cite[Conj. 1.12]{Zywina-PossibleImages} and Sutherland~\cite[Conj. 1.1]{Sutherland-ComputingGaloisImages}.}
\end{remark}

\begin{theorem}\label{thm:Main}
Assume Conjecture~\ref{conj:Uniformity}.  Then for any number field $k$, there exists a positive integer $M=M(k)$ such that if $x \in X_1(n)$ is a non-cuspidal, non-CM sporadic, {respectively isolated,} point with $j(x) \in k$, then $\pi(x) \in X_1(\gcd(n,M))$ is a sporadic, {respectively isolated,} point. Moreover, if the stronger Conjecture~\ref{conj:StrongUniformity} holds, then $M$ depends only on $[k:\Q]$.
\end{theorem}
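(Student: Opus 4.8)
The plan is to deduce Theorem~\ref{thm:Main} from Theorem~\ref{thm:UnconditionalMain} by showing that, under Conjecture~\ref{conj:Uniformity}, the auxiliary integer $m$ and the resulting level $M(E,m)$ in that theorem can be chosen uniformly in the non-CM elliptic curve. So, given $x\in X_1(n)$ with $j(x)\in k$ non-cuspidal and non-CM, we may assume $j(x)\neq 0,1728$, pick an elliptic curve $E/k$ with $j(E)=j(x)$, and first pin down an admissible $m$: for a prime $\ell\geq 5$, Serre's theorem~\cite{Serre-OpenImage} shows the $\ell$-adic representation of $E$ is surjective as soon as its mod-$\ell$ reduction is, so granting Conjecture~\ref{conj:Uniformity} the set of primes where the $\ell$-adic representation of $E$ fails to be surjective lies in the fixed finite set $\{\ell:\ell\leq\max(C(k),3)\}$. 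Thus $m_0=m_0(k):=\prod_{\ell\leq\max(C(k),3)}\ell$ is an admissible choice of $m$ in Theorem~\ref{thm:UnconditionalMain} for \emph{every} non-CM $E/k$, and under Conjecture~\ref{conj:StrongUniformity} it depends only on $[k:\Q]$.

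\emph{Step 2: a uniform bound on the level.} This is the crux. We claim there is an integer $M=M(k)$ with $M(E,m_0)\mid M(k)$ for all non-CM $E/k$. Since $M(E,m_0)$ is supported on the fixed set of primes dividing $m_0$, it suffices to bound, uniformly in $E$, the $\ell$-adic level of $E$ for each $\ell\mid m_0$ together with the entanglement among these finitely many primes. At a prime $\ell\geq 5$ where $\overline{\rho}_{E,\ell}$ is surjective the $\ell$-adic level is $1$, again by Serre; at the finitely many remaining primes $\ell\leq\max(C(k),3)$ one bounds the $\ell$-adic level using the known description of the $\ell$-adic images of elliptic curves (for $k=\Q$ and $\ell=2,3$ the classifications of Rouse--Zureick-Brown and Rouse--Sutherland--Zureick-Brown; in general the corresponding boundedness results); and entanglement among a fixed finite set of primes contributes a bounded factor. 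Combining these gives $M(E,m_0)\mid M(k)$ with $M(k)$ depending only on $k$, and only on $[k:\Q]$ under Conjecture~\ref{conj:StrongUniformity}.

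\emph{Step 3: transfer and conclude.} By Theorem~\ref{thm:UnconditionalMain} applied to $E$ with $m=m_0$, the image of $x$ under $X_1(n)\to X_1(\gcd(n,M(E,m_0)))$ is sporadic, respectively isolated. Since $M(E,m_0)\mid M(k)$ we have $\gcd(n,M(E,m_0))\mid\gcd(n,M(k))$, so this map factors through $\pi\colon X_1(n)\to X_1(\gcd(n,M(k)))$. Using that $x$ is sporadic, respectively isolated, on $X_1(n)$, that its further image on $X_1(\gcd(n,M(E,m_0)))$ is sporadic, respectively isolated, and the functoriality of these notions under maps of modular curves established in Section~\ref{sec:MapsOfCurves}, we conclude that the intermediate point $\pi(x)\in X_1(\gcd(n,M(k)))$ is sporadic, respectively isolated. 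Taking $M=M(k)$ proves Theorem~\ref{thm:Main}, and under Conjecture~\ref{conj:StrongUniformity} this $M$ depends only on $[k:\Q]$.

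The main obstacle is Step 2: Conjecture~\ref{conj:Uniformity} identifies \emph{which} primes can be bad and bounds their number uniformly, but not the depth in the $\ell$-adic tower at which surjectivity can fail at those finitely many small primes. Bounding that depth — the $\ell$-adic levels at the small primes, together with the entanglement among them — is exactly where one must appeal to explicit classification results, and is the technical heart of the reduction; Step 3 is then routine given Section~\ref{sec:MapsOfCurves}.
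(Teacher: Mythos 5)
Your overall architecture matches the paper's: use Conjecture~\ref{conj:Uniformity} to get a uniform finite set of ``bad'' primes, then bound the level of the resulting $m$-adic representation uniformly in $E$, and finally push sporadic/isolated points down to the bounded level. Step~1 is correct and is exactly what the paper does. The problem is Step~2, which you correctly identify as the crux, but which you leave as an assertion. This is where the real content of the proof lives, and the tools you gesture at would not suffice. In particular, citing the Rouse--Zureick-Brown and Rouse--Sutherland--Zureick-Brown classifications of $\ell$-adic images gives bounds only over $\Q$ and only for $\ell=2,3$; you acknowledge one would need ``the corresponding boundedness results'' in general, but this is precisely the statement that needs proving, and the conjectures you are allowed to assume (Conjectures~\ref{conj:Uniformity}, \ref{conj:StrongUniformity}) say nothing about the depth of the $\ell$-adic tower at the finitely many bad primes, nor about entanglement among them.

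What the paper actually does for Step~2 is Proposition~\ref{prop:UniformLevelFiniteSetPrimes}, whose ingredients are different from what you invoke. Uniform control over the $\ell$-adic level for a fixed $\ell$ and fixed degree $d$ comes \emph{unconditionally} from the Cadoret--Tamagawa uniform open image theorem (Theorem~\ref{thm:UniformIndex}), which bounds the index $[\GL_2(\Z_\ell):\im\rho_{E,\ell^\infty}]$ purely in terms of $d$ and $\ell$; the passage from a bounded index to a bounded level requires the Lang--Trotter lemma (Proposition~\ref{prop:MaximalKernelGivesLevel}). Bounding the entanglement across the finitely many primes of $S$ is then handled via Goursat's lemma and Proposition~\ref{prop:levelforcomp}, which is not a ``bounded factor'' that comes for free but a real argument. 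Without this machinery your Step~2 is a gap.

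A smaller issue in Step~3: you apply Theorem~\ref{thm:UnconditionalMain} at the exact level $M(E,m_0)$, and then claim the intermediate image $\pi(x)\in X_1(\gcd(n,M(k)))$ is sporadic/isolated ``by functoriality''. Theorem~\ref{thm:PushingForwardSporadicIsolated} requires the degree hypothesis $\deg(x)=\deg(\pi)\deg(\pi(x))$, which you do not check for $\pi$. It does hold (multiplicativity of degrees plus the trivial inequalities $\deg(y)\leq\deg(h(y))\deg(h)$ force equality at every intermediate stage once it holds for the composite), but this needs to be said. Cleaner still is to bypass the detour entirely and apply Corollary~\ref{cor:SporadicPtsFixedjInvariant} directly with $M=M(k)$, which is the paper's route: that corollary only requires $\im\rho_{E,\mm_S^\infty}=\pi^{-1}(\im\rho_{E,M})$, a condition satisfied by \emph{any} multiple of the level with support in $S$, not only the minimal level demanded by the statement of Theorem~\ref{thm:UnconditionalMain}.
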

We call a point in $j\in X_1(1)\isom \PP^1$ an {isolated} $j$-invariant if it is the image of an {isolated} point on $X_1(n)$, for some positive integer $n$ . Since {any curve only has finitely many isolated points (see Theorem~\ref{thm:FiniteIsolated}\eqref{case:FinitelyManyIsolated})} and there are only finitely many CM $j$-invariants of bounded degree, we immediately obtain the following corollary.
\begin{cor}Fix a number field $k$.\hfill
  \begin{enumerate}[leftmargin=*]
    \item[a)]  Assume Conjecture~\ref{conj:Uniformity}. There are finitely many $k$-rational isolated $j$-invariants. 
    \item[b)] Assume Conjecture \ref{conj:StrongUniformity}. There are finitely many isolated $j$-invariants of bounded degree.
  \end{enumerate}
\end{cor}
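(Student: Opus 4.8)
The plan is to derive both parts of the corollary from Theorem~\ref{thm:Main}, the finiteness of isolated points on a fixed curve (Theorem~\ref{thm:FiniteIsolated}\eqref{case:FinitelyManyIsolated}), and the classical finiteness of CM $j$-invariants of bounded degree. First I would separate off two harmless cases. The cusp of $X_1(1) \isom \PP^1$ is the single point $j = \infty$, so it contributes at most one cuspidal isolated $j$-invariant. A CM $j$-invariant of degree $d'$ over $\Q$ has complex multiplication by an imaginary quadratic order of class number $d'$, and there are only finitely many imaginary quadratic orders of class number at most any fixed bound; hence there are only finitely many CM isolated $j$-invariants of bounded degree, in particular only finitely many $k$-rational ones. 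It therefore remains to bound the non-cuspidal, non-CM isolated $j$-invariants, and since such a $j$ lies in $\PP^1\bigl(\Q(j)\bigr)$ it suffices to work with a number field $k$ whose degree is controlled (by $[k:\Q]$ in part (a), by the uniform bound in part (b)).

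For part (a), assume Conjecture~\ref{conj:Uniformity} and let $M = M(k)$ be the integer supplied by Theorem~\ref{thm:Main}. If $j \in k$ is a non-cuspidal, non-CM isolated $j$-invariant, then by definition there exist $n$ and an isolated point $x \in X_1(n)$ with $j(x) = j$; this $x$ is non-cuspidal and non-CM, so Theorem~\ref{thm:Main} shows $\pi(x) \in X_1(\gcd(n,M))$ is isolated. Since $\gcd(n,M) \mid M$, the point $\pi(x)$ is an isolated point of one of the finitely many curves $X_1(e)$ with $e \mid M$, each of which has only finitely many isolated points by Theorem~\ref{thm:FiniteIsolated}\eqref{case:FinitelyManyIsolated}. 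As the natural maps $X_1(n) \to X_1(\gcd(n,M)) \to X_1(1)$ compose to the $j$-map, the image of $\pi(x)$ in $X_1(1)$ equals $j$. Thus every such $j$ lies in the union over $e \mid M$ of the images in $X_1(1)$ of the finitely many isolated points of $X_1(e)$, which is a finite set. Together with the cuspidal and CM contributions, this proves (a).

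For part (b), assume the stronger Conjecture~\ref{conj:StrongUniformity}, so that the integer in Theorem~\ref{thm:Main} may be taken to be a function $M'$ of $[k:\Q]$ alone; put $M = \prod_{d'=1}^{d} M'(d')$. For each non-cuspidal, non-CM isolated $j$-invariant $j$ of degree at most $d$, apply the argument of the previous paragraph with $k = \Q(j)$: then $\pi(x) \in X_1(\gcd(n, M'(\deg j)))$ is isolated, and $\gcd(n, M'(\deg j)) \mid M$, so $\pi(x)$ is again an isolated point of some $X_1(e)$ with $e \mid M$, and $j$ is its image in $X_1(1)$. Adding the finitely many cuspidal and CM $j$-invariants of degree at most $d$ gives (b).

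Since Theorem~\ref{thm:Main} and Theorem~\ref{thm:FiniteIsolated} carry all the substance, the corollary itself is a bookkeeping argument with no real obstacle; the only external input is the finiteness of imaginary quadratic orders of bounded class number, which I would cite rather than prove, and the one point to state carefully is the compatibility of the degeneracy maps $X_1(n) \to X_1(m)$ (for $m \mid n$) with the $j$-map, which is what guarantees that controlling the isolated points at bounded level genuinely controls the isolated $j$-invariants.
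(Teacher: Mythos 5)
Your proposal is correct and takes essentially the same approach as the paper: the paper presents this corollary with a one-sentence justification citing Theorem~\ref{thm:Main}, Theorem~\ref{thm:FiniteIsolated}\eqref{case:FinitelyManyIsolated}, and the finiteness of CM $j$-invariants of bounded degree, and your argument simply fleshes out those same steps (reduce via Theorem~\ref{thm:Main} to finitely many levels $e \mid M$, then collect the finitely many isolated points on each $X_1(e)$, plus the finitely many CM and cuspidal $j$-invariants). The only minor addition you make beyond the paper is spelling out the cuspidal case and the compatibility of the degeneracy maps with the $j$-map, both of which are correct and harmless.
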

The integer $M$ in Theorem \ref{thm:Main} depends on the constant $C(k)$ or $C(d)$ from Conjecture~\ref{conj:Uniformity} or Conjecture~\ref{conj:StrongUniformity}, respectively, and also depends on a uniform bound for the level of the $\ell$-adic Galois representation  
for all $\ell\leq C(k)$, respectively $C(d)$.  The existence of this latter bound depends on Faltings's Theorem and as such is ineffective.  However, in the case when $k = \Q$, it is possible to make a reasonable guess for $M$.  This is discussed more in Section~\ref{sec:RationaljInvariant}.

\subsection{Prior work}\label{subsec:PriorWork}
CM elliptic curves provide a natural class of examples of sporadic points due to fundamental constraints on the image of the associated Galois representation. Indeed, Clark, Cook, Rice, and Stankewicz show that there exist sporadic points corresponding to CM elliptic curves on $X_1(\ell)$ for all sufficiently large primes $\ell$~\cite{CCS}.  Sutherland has extended this argument to include composite integers~\cite{Sutherland-Survey}.

In the non-CM case, all known results on sporadic points have arisen from explicit versions of Merel's theorem for low degree.  For instance, in studying cubic points on $\cup_{n\in \NN}X_1(n)$, Najman identified two degree $3$ sporadic points on $X_1(21)$ all corresponding to the same non-CM elliptic curve with rational $j$-invariant~\cite{Najman}. 
Derickx, Etropolski, van Hoeij, Morrow, and Zureick-Brown {are currently classifying all degree $3$ non-cuspidal non-CM sporadic points on $X_1(n)$, and preliminary results suggest that these examples of Najman are the only examples} \cite{DEvHZB}. Work of van Hoeij~\cite{vanHoeij}, Derickx--van Hoeij \cite{DerickxVanHoeij}, and Derickx--Sutherland~\cite{DerickxSutherland} show that there are additional sporadic points, e.g., degree 5 points on $X_1(28)$ and $X_1(30)$ and a degree $6$ point on $X_1(37)$.

{There are also examples of isolated points that are not sporadic. Derickx and van Hoeij \cite{DerickxVanHoeij} have shown $X_1(25)$ has a (nonempty) finite collection of points of degree $d=6$ and $d=7$. These points are isolated by Theorem \ref{thm:FiniteIsolated}, but not sporadic since the $\Q$-gonality of $X_1(25)$ is 5. In general, having infinitely points on a curve of degree $d$ does not preclude the existence of isolated points of degree $d$: see \cites{Siksek, Bruin-Najman, Gunther-Morrow, Box} for some examples}.

\subsection{Outline}
We set notation and review relevant background in Section~\ref{sec:Background}. In Section~\ref{sec:AdelicGroups} we record results about subgroups of $\GL_2(\Zhat)$ that will be useful in later proofs; in particular, Proposition~\ref{prop:levelforcomp} is useful for determining the level of an $m$-adic Galois representation from information about the $\ell$-adic representations.  {In Section~\ref{sec:MapsOfCurves}, we prove a general criterion for the images of sporadic or isolated points to remain sporadic or isolated (Theorem~\ref{thm:PushingForwardSporadicIsolated}); this result is likely of independent interest.}  In Section~\ref{sec:FixedjInvariant}, we study isolated points on modular curves over a fixed non-CM $j$-invariant and prove Theorem~\ref{thm:UnconditionalMain}. This is then used in Section~\ref{sec:Uniform} to prove Theorem~\ref{thm:Main}. 

Theorem~\ref{thm:Main} implies that, assuming Conjecture~\ref{conj:StrongUniformity}, there are finitely many isolated $j$-invariants of bounded degree.  This raises two interesting questions:
  \begin{enumerate}
    \item Are there finitely many isolated \emph{points} lying over $j$-invariants of bounded degree, or can there be infinitely many isolated points over a single $j$-invariant? \label{question:LiftingSporadic}
    \item In the case of degree $1$, when there is strong evidence for Conjecture~\ref{conj:StrongUniformity}, can we come up with a candidate list for the rational isolated $j$-invariants?\label{question:RationalSporadic}
  \end{enumerate}
Question~\ref{question:LiftingSporadic} is the focus of Section~\ref{sec:LiftingSporadic}, where we show that any CM $j$-invariant has infinitely many isolated (and in fact, sporadic!) points lying over it.  Section~\ref{sec:RationaljInvariant} focuses on Question~\ref{question:RationalSporadic}; there we provide a candidate list of levels from which the rational isolated $j$-invariants can be found.

\section*{Acknowledgements}
  This project was started at the Women in Numbers 4 conference, which was held at the Banff International Research Station.  We thank BIRS for the excellent working conditions and the organizers, Jennifer Balakrishnan, Chantal David, Michelle Manes, and the last author, for their support.  We also thank the other funders of the conference: the Clay Mathematics Institute, Microsoft Research, the National Science Foundation, the Number Theory Foundation, and the Pacific Institute for Mathematics Sciences.  

  We thank Jeremy Rouse, Drew Sutherland, and David Zureick-Brown for helpful conversations.   {We also thank Nigel Boston, Pete L. Clark, Lo\"{i}c Merel, Filip Najman, Paul Pollack, and the anonymous referees for comments on an earlier draft.  }The third author was partially supported by NSF grant DMS-1652116 and DMS-1301690 and the last author was partially supported by NSF CAREER grant DMS-1553459.

\section{Background and notation}\label{sec:Background}

    \subsection{Conventions}
        Throughout, $k$ denotes a number field, $\Qbar$ denotes a fixed algebraic closure of $\Q$, and $\Gal_k$ denotes the absolute Galois group $\Gal(\Qbar/k)$.

        We use $\ell$ to denote a prime number and $\Z_{\ell}$ to denote the $\ell$-adic integers.  For any positive integer $m$, we write $\Supp(m)$ for the set of prime divisors of $m$ and write $\Z_m := \prod_{\ell\in\Supp(m)}\Z_{\ell}$.   We use $S$ to denote a set of primes, typically finite; when $S$ is finite, we write $\frakm_S := \prod_{\ell\in S}\ell.$

        For any subgroup $G$ of $\GL_2(\Zhat)$ and any positive integer $n$, we write $G_n$ and $G_{n^{\infty}}$, respectively for the images of $G$ under the projections 
        \[
            \GL_2(\Zhat)\to \GL_2(\Z/n\Z)
            \quad\textup{and}\quad
            \GL_2(\Zhat) \to \GL_2(\Z_n).
        \]
        In addition, for any positive integer $m$ relatively prime to $n$ we write $G_{n\cdot m^{\infty}}$ for the image of $G$ under the projection
        \[
            \GL_2(\Zhat) \to \GL_2(\Z/n\Z)\times \GL_2(\Z_m).   
        \]
        Throughout, we will abuse notation and use $\pi$ to denote any natural projection map among the groups $G, G_{n^{\infty}},$ and $G_n$.
        
        {By curve we mean a projective nonsingular geometrically integral $1$-dimensional scheme over a field.  For a curve $C$, we write $\kk(C)$ for the function field of $C$ and $\Pic_C$ for the Picard scheme of $C$.  For any non-negative integer $d$, we write $\Pic^d_C$ for the connected component of $\Pic_C$ consisting of divisor classes of degree $d$ and $\Sym^d C$ for the $d^{th}$ symmetric product of $C$, i.e., $C^d/S_d$.}  If $C$ is defined over the field $K$, we use $\gon_K(C)$ to denote the $K$-gonality of $C$, which is the minimum degree of a dominant morphism $C \rightarrow \mathbb{P}^1_K$. If $x$ is a closed point of $C$, we denote the residue field of $x$ by $\kk(x)$ and define the degree of $x$ to be the degree of the residue field $\kk(x)$ over $K$. A point $x$ on a curve $C/K$ is \defi{sporadic} if there are only finitely many points $y\in C$ with $\deg(y)\leq \deg(x)$.  {We also consider other related properties of a closed point on a curve: \defi{isolated}, $\defPisolated$, and $\defAVisolated$; these terms are defined in Section~\ref{sec:MapsOfCurves}.}
        
        We use $E$ to denote an elliptic curve, i.e., a curve of genus $1$ with a specified point $O$. {Unless stated otherwise,} we will consider only elliptic curves defined over {number fields}. We say that an elliptic curve $E$ over a field $K$ has complex multiplication, or {CM}, if the geometric endomorphism ring is strictly larger than $\mathbb{Z}$.  Given an elliptic curve $E$ over a number field $k$,  an affine model of $E$ is given by a short Weierstrasss equation $y^2=x^3+Ax+B$ for some $A,B \in k$.  Then the $j$-invariant of $E$ is  $j(E):=1728\frac{4A^3}{4A^3+27B^2}$ and uniquely determines the geometric isomorphism class of $E$.  For a positive integer $n$, we write $E[n]$ for the subgroup of $E$ consisting of points of order at most $n$.

    \subsection{Modular Curves} 
        For a positive integer $n$, let
        \[
            \Gamma_1(n) \coloneqq
            \left\{{\left(\begin{smallmatrix}a & b \\ c & d \end{smallmatrix}\right)} \in \SL_2(\Z) :  c \equiv 0 \pmod{n}, \, a \equiv d \equiv 1 \pmod{n}\right\}. 
        \]
        The group $\Gamma_1(n)$ acts on the upper half plane $\mathbb{H}$ via linear fractional transformations, and the points of the Riemann surface 
        \[
            Y_1(n) \coloneqq \mathbb{H}/\Gamma_1(n)
        \]
        correspond to $\mathbb{C}$-isomorphism classes of elliptic curves with a distinguished point of order $n$. That is, a point in $Y_1(n)$ corresponds to an equivalence class of pairs $[(E,P)]$, where $E$ is an elliptic curve over $\C$ and $P\in E$ is a point of order $n$, and where
        $(E,P) \sim (E',P')$ if there exists an isomorphism $\varphi\colon E \rightarrow E'$ such that $\varphi(P)=P'$. By adjoining a finite number of cusps to $Y_1(n)$, we obtain the smooth projective curve $X_1(n)$. {Concretely, we may define the extended upper half plane $\mathbb{H}^* \coloneqq \mathbb{H}\cup \Q \cup \{\infty\}$. Then $X_1(n)$ corresponds to the extended quotient $\mathbb{H}^*/\Gamma_1(n)$.}  In fact, we may view $X_1(n)$ as an algebraic curve defined over $\Q$ (see \cite[Section 7.7]{DiamondShurman} or~\cite{DeligneRapoport} for more details).

        \subsubsection{Degrees of non-cuspidal algebraic points}
        If $x=[(E,P)]\in X_1(n)(\Qbar)$ is a non-cuspidal point, then the moduli definition implies that
        $\deg(x)=[\Q(j(E), \mathfrak{h}(P)):\Q],$
        where $\mathfrak{h}: E \rightarrow E/\Aut(E) \cong \mathbb{P}^1$ is a Weber function for $E$. 
        From this we deduce the following lemma:
        \begin{lemma}\label{lem:degree}
            Let $E$ be a non-CM elliptic curve defined over the number field $k=\Q(j(E))$, let $P\in E$ be a point of order $n$, and let $x = [(E,P)]\in X_1(n)$. Then
            \[
                \deg(x)=c_x[k(P):\Q],
            \]
            where $c_x=1/2$ if {$2P \neq O$ and} there exists $\sigma \in \Gal_k$ such that {$\sigma(P)=-P$} and $c_x=1$ otherwise.
        \end{lemma}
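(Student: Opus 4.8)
The plan is to combine the moduli-theoretic formula for $\deg(x)$ recalled just above with the structure of the automorphism group of a non-CM elliptic curve, and then phrase everything in terms of stabilizers in $\Gal_k$. Since $j(E)\in k$, the formula $\deg(x)=[\Q(j(E),\mathfrak{h}(P)):\Q]$ reads $\deg(x)=[k(\mathfrak{h}(P)):\Q]$. Because $E$ is non-CM we have $j(E)\notin\{0,1728\}$, so $\Aut_{\kbar}(E)=\{\pm 1\}$; this group is already defined over $k$, hence $E/\Aut(E)$ is a genus-zero $k$-curve with a rational point (the image of $O$) and therefore $\isom\PP^1_k$, so the Weber function $\mathfrak{h}\colon E\to\PP^1_k$ may be taken to be a $k$-morphism — concretely $\mathfrak{h}(x,y)=x$ for a Weierstrass model $y^2=x^3+Ax+B$ with $A,B\in k$. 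In particular $\mathfrak{h}$ is $\Gal_k$-equivariant and, for $Q,Q'\in E(\kbar)$, one has $\mathfrak{h}(Q)=\mathfrak{h}(Q')$ if and only if $Q'=\pm Q$.

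Next I would pass to Galois groups. Set $N=\{\sigma\in\Gal_k:\sigma(P)=P\}$ and $H=\{\sigma\in\Gal_k:\sigma(P)\in\{P,-P\}\}$. By the previous paragraph these are exactly the stabilizers in $\Gal_k$ of $P$ and of $\mathfrak{h}(P)$, so they are $\Gal(\Qbar/k(P))$ and $\Gal(\Qbar/k(\mathfrak{h}(P)))$ respectively; thus $N\leq H$, $[H:N]\in\{1,2\}$, and $[k(P):k(\mathfrak{h}(P))]=[H:N]$. Finally I would split into cases. If $2P=O$ then $-P=P$, so $H=N$ and $\deg(x)=[k(\mathfrak{h}(P)):\Q]=[k(P):\Q]$, i.e.\ $c_x=1$. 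If $2P\neq O$, then $H\setminus N=\{\sigma\in\Gal_k:\sigma(P)=-P\}$, so $[H:N]=2$ exactly when some $\sigma\in\Gal_k$ satisfies $\sigma(P)=-P$, and $[H:N]=1$ otherwise; in either case $\deg(x)=[k(\mathfrak{h}(P)):\Q]=\tfrac{1}{[H:N]}[k(P):\Q]$, which yields $c_x=1/2$ in the first subcase and $c_x=1$ in the second. This matches the stated formula.

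There is no serious obstacle here: the only points needing care are the descent assertion that the Weber function can be chosen over $k$ (equivalently, that $[-1]$ generates the full geometric automorphism group and is $k$-rational, which uses $j(E)\neq 0,1728$), and the bookkeeping in the edge case $2P=O$, where the condition ``$\sigma(P)=-P$'' is automatically satisfied and so must not be allowed to force $c_x=1/2$. The genuinely substantive ingredient, namely the identity $\deg(x)=[\Q(j(E),\mathfrak{h}(P)):\Q]$, is imported from the moduli description stated immediately before the lemma.
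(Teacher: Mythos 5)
Your proposal is correct and follows essentially the same approach as the paper: both reduce to computing $[k(P):k(\mathfrak{h}(P))]$ via the observation that $\sigma\in\Gal_k$ fixes $\mathfrak{h}(P)$ if and only if $\sigma(P)=\pm P$, then split cases on whether some $\sigma$ sends $P$ to $-P$. Your write-up is slightly more explicit than the paper's about the $2P=O$ edge case and about why the Weber function descends to $k$, but these are just details the paper leaves implicit.
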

        \begin{proof}
            Let $E$ be a non-CM elliptic curve defined over $k=\Q(j(E))$ and let $\mathfrak{h}$ be a Weber function for $E$. If $\sigma \in \Gal_{k(\mathfrak{h}(P))}$, then $\sigma(P)=\xi(P)$ for some $\xi \in \Aut(E)$.
            Thus in the case where $\Aut(E)=\{\pm 1\}$, 
            \[
                [k(P):k( \mathfrak{h}(P))]=1 \text{ or } 2.
            \]
            If there exists $\sigma \in \Gal_k$ such that $\sigma(P)=-P$, then $[k(P):k( \mathfrak{h}(P))]=2$ and $c_x=1/2$. Otherwise $k(P)=k( \mathfrak{h}(P))$ and $c_x=1$.
        \end{proof}

        \subsubsection{Maps between modular curves}
            \begin{proposition}\label{prop:Degree}
                For positive integers $a$ and $b$, there is a natural $\Q$-rational map $f\colon X_1(ab) \rightarrow X_1(a)$ with
                \[
                    \deg(f)=
                    c_{f}\cdot b^2 \prod_{p \mid b,\, p \nmid a}
                    \left(1-\frac{1}{p^2}\right),
                \]
                where $c_{f}=1/2$ if $a \leq 2$ and $ab>2$ and $c_{f}=1$ otherwise. 
            \end{proposition}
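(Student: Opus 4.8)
The plan is to pull the computation back to $\C$, where $X_1(n)$ is the quotient $\mathbb{H}^*/\overline{\Gamma}_1(n)$, and interpret the degree as an index of congruence subgroups. First I would pin down $f$: it is the map $[(E,P)]\mapsto[(E,bP)]$, which is well defined since a point of order $ab$ is sent to a point of order $a$, and it is a morphism of coarse moduli schemes over $\Q$. Under the normalization in which $\tau\in\mathbb{H}$ corresponds to $(\C/(\Z+\Z\tau),\tfrac1n)$, the base change $f_\C$ is exactly the analytic covering $\mathbb{H}^*/\Gamma_1(ab)\to\mathbb{H}^*/\Gamma_1(a)$ induced by the inclusion $\Gamma_1(ab)\subseteq\Gamma_1(a)$ (this uses the standard algebraic model of $X_1(n)/\Q$ as in \cite[\S7.7]{DiamondShurman} or \cite{DeligneRapoport}). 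Since the degree of a finite morphism of curves is unchanged under $\Q\hookrightarrow\C$, it suffices to compute the degree of this covering. Writing $\overline{\Gamma}_1(n)$ for the image of $\Gamma_1(n)$ in $\PSL_2(\Z)$ and using that a generic point of $\mathbb{H}$ has trivial stabilizer in $\PSL_2(\Z)$, one gets $\deg(f)=[\overline{\Gamma}_1(a):\overline{\Gamma}_1(ab)]$.

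Next I would convert this $\PSL_2$-index into an $\SL_2$-index. From $\overline{\Gamma}_1(n)=\Gamma_1(n)\{\pm I\}/\{\pm I\}$ we have $\deg(f)=[\Gamma_1(a)\{\pm I\}:\Gamma_1(ab)\{\pm I\}]$, and expanding along the tower $\Gamma_1(ab)\le\Gamma_1(a)\le\Gamma_1(a)\{\pm I\}$ gives
\[
\deg(f)=[\Gamma_1(a):\Gamma_1(ab)]\cdot\frac{[\Gamma_1(a)\{\pm I\}:\Gamma_1(a)]}{[\Gamma_1(ab)\{\pm I\}:\Gamma_1(ab)]}.
\]
Since $-I\in\Gamma_1(n)$ if and only if $n\le 2$, the factor $[\Gamma_1(n)\{\pm I\}:\Gamma_1(n)]$ is $1$ for $n\le2$ and $2$ for $n\ge3$; checking the three cases $a\ge3$, $a\le2<ab$, and $ab\le2$ shows the correction ratio equals precisely the constant $c_f$ of the statement.

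Finally I would compute $[\Gamma_1(a):\Gamma_1(ab)]=[\SL_2(\Z):\Gamma_1(ab)]/[\SL_2(\Z):\Gamma_1(a)]$. Using that $\Gamma_1(n)$ is the preimage in $\SL_2(\Z)$ of the order-$n$ subgroup $\left\{\left(\begin{smallmatrix}1&*\\0&1\end{smallmatrix}\right)\right\}\subseteq\SL_2(\Z/n\Z)$ together with $\#\SL_2(\Z/n\Z)=n^3\prod_{p\mid n}(1-p^{-2})$, one gets $[\SL_2(\Z):\Gamma_1(n)]=n^2\prod_{p\mid n}(1-p^{-2})$, hence
\[
[\Gamma_1(a):\Gamma_1(ab)]=b^2\prod_{\substack{p\mid b\\ p\nmid a}}\left(1-\frac1{p^2}\right),
\]
and multiplying by $c_f$ yields the claimed formula. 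The two index formulas are routine; the only step that genuinely needs care is the $\pm I$ bookkeeping, i.e.\ remembering that over $\C$ the modular curve is a $\PSL_2$-quotient, so the relevant index is $[\overline{\Gamma}_1(a):\overline{\Gamma}_1(ab)]$ rather than $[\Gamma_1(a):\Gamma_1(ab)]$ — this is exactly what produces $c_f$ and its dependence on whether $a\le2$ and $ab\le2$.
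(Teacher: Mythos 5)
Your proof is correct and follows essentially the same route as the paper: identify $f$ via the moduli interpretation $[(E,P)]\mapsto[(E,bP)]$ (which gives $\Q$-rationality), pass to the analytic picture $\mathbb{H}^*/\Gamma_1(n)$, and reduce to the index $[\Gamma_1(a):\Gamma_1(ab)]$ with a $\pm I$ correction. The only difference is that the paper simply cites the degree formula with its $\pm I$ case distinction from Diamond--Shurman \cite[p.~66]{DiamondShurman}, whereas you re-derive it from scratch by working with $\overline{\Gamma}_1(n)\subseteq\PSL_2(\Z)$ and then converting to an $\SL_2$-index via the factor $[\Gamma_1(a)\{\pm I\}:\Gamma_1(a)]/[\Gamma_1(ab)\{\pm I\}:\Gamma_1(ab)]$; the case check ($a\geq 3$; $a\leq 2<ab$; $ab\leq 2$) correctly recovers $c_f$, and the index formula $[\SL_2(\Z):\Gamma_1(n)]=n^2\prod_{p\mid n}(1-p^{-2})$ gives the product. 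So your argument is a slightly more self-contained version of the same proof.
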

            \begin{proof} 
                Since $\Gamma_1(ab) \subset \Gamma_1(a)$, we have a natural  map $X_1(ab) \rightarrow X_1(a)$ that complex analytically is induced by $\Gamma_1(ab)\tau \mapsto \Gamma_1(a)\tau$ for {$\tau \in \mathbb{H}^*$}. On non-cuspidal points, this map has the moduli interpretation $[(E,P)] \mapsto [(E,bP)]$, which shows that it is $\Q$-rational. Since $-I\in \Gamma_1(n)$ if and only if $n|2$, the degree computation then follows from the formula~\cite[p.66] {DiamondShurman}, which states
                \[
                    \deg(f)=
                    \begin{cases}
                        [\Gamma_1(a):\Gamma_1(ab)]/2 & \textup{if } -I \in \Gamma_1(a)\textup{ and } -I \not\in \Gamma_1(ab)\\
                        [\Gamma_1(a):\Gamma_1(ab)] & \textup{otherwise.}\hfill
                    \end{cases}\qedhere
                \]
            \end{proof}

    \subsection{Galois Representations of Elliptic Curves}\label{sec:Serre}
        Let $E$ be an elliptic curve over a number field $k$. Let $n$ be a positive integer.  After fixing two generators for $E(\kbar)[n]$, we obtain a Galois representation
        \[
            \rho_{E, n} \colon \Gal_k \to \GL_2(\Z/n\Z).
        \]
        {Note that the conjugacy class of the image of $\rho_{E, n}$ is independent of the choice of generators.}
        After choosing compatible generators for each $n$, we obtain a Galois representation
        \[
            \rho_E \colon \Gal_k \to \GL_2(\Zhat) \isom \prod_{\ell}\GL_2(\Z_{\ell}),
        \]
        which agrees with $\rho_{E,n}$ after reduction modulo $n$.  For any positive integer $n$ we also define
        \[
            \rho_{E, n^{\infty}} \colon \Gal_k \to \GL_2(\Z_{n})
        \]
        to be the composition of $\rho_E$ with the projection onto the $\ell$-adic factors for $\ell|n$.  Note that $\rho_{E,n^{\infty}}$ depends only on the support of $n$.  We refer to $\rho_{E, n}$, $\rho_{E, n^{\infty}},$ and $\rho_{E}$ as the mod $n$ Galois representation of $E$, the $n$-adic Galois representation of $E$, and the adelic Galois representation of $E$, respectively.
        
        If $E{/k}$ does not have complex multiplication, then Serre's Open Image Theorem \cite{Serre-OpenImage} states that $\rho_{E}(\Gal_k)$ is open---and hence of finite index---in $\GL_2(\Zhat)$. Since the kernels of the natural projection maps $\GL_2(\Zhat) \rightarrow \GL_2(\Z/n\Z)$ form a fundamental system of open neighborhoods of the identity in $\GL_2(\Zhat)$~\cite[Lemma 2.1.1]{RibesZalesskii}, it follows that for any open subgroup $G$ of $\GL_2(\Zhat)$ there exists  $m\in \Z^+$ such that $G= \pi^{-1} (G \mod m)$. Thus Serre's Open Image Theorem can be rephrased in the following way: for any non-CM elliptic curve $E/k$, there exists a positive integer {$M$} such that
        \[
            \im \rho_{E} = \pi^{-1}(\im \rho_{E, M}).  
        \]
       {We call the smallest such $M$ the level and denote it $M_E$.}  {Similarly, for any finite set of primes $S$, we let $M_E(S)$ be the least positive integer such that $\im \rho_{E, \mm_S^\infty} = \pi^{-1}(\im\rho_{E, M_E(S)})$ and we say that $M_E(S)$ is the level of the $\mm_S$-adic Galois representation. }

        We also define
        {\begin{equation}\label{eq:DefnOfSE}
            S_E = S_{E/k} := \left\{2, 3\right\} \cup \left\{ \ell : \rho_{E, \ell^{\infty}}(\Gal_k) \not\supset \SL_2(\Z_{\ell}) \right\} \cup \left\{5, \textup{ if }\rho_{E, 5^{\infty}}(\Gal_k) \neq  \GL_2(\Z_5)\right\};
        \end{equation}}
        by Serre's Open Image Theorem, this is a finite set.

        For any elliptic curve $E/\Q$ with discriminant $\Delta_E$,\footnote{{While the discriminant depends on a Weierstrass model, the class of $\Delta_E\in \Q^{\times}/\Q^{\times2}$ is independent of the choice of model.  Since we are concerned only with $\Delta_E$ mod squares, we allow ourselves this abuse of notation.}} Serre observed that the field $\Q(\sqrt{\Delta_E})$ is contained in the $2$-division field $\Q(E[2])$ as well as a cyclotomic field $\Q(\mu_n)$ for some $n$, which in turn is contained in the $n$-division field $\Q(E[n])$.  Thus if $\ell>2$ is a prime that divides the squarefree part of $\Delta_E$, then $2\ell$ must divide the level $M_E$ {(see~\cite[Proof of Prop. 22]{Serre-OpenImage} for more details)}. In particular, the level of an elliptic curve can be arbitrarily large. In contrast, for a fixed prime $\ell$, the level of the $\ell$-adic Galois representation is bounded depending only on the degree of the field of definition. 
        \begin{theorem}[{\cite[Theorem 1.1]{CadoretTamagawa}, see also \cite[Theorem 2.3]{ClarkPollack}}]\label{thm:UniformIndex}
            Let $d$ be a positive integer and let $\ell$ be a prime number. There exists a constant {$C=C(d,\ell)$} such that for all number fields $k$ of degree $d$ and all non-CM elliptic curves $E{/k}$,
            \[
                [\GL_2(\Z_{\ell}):\im\rho_{E,\ell^{\infty}}]<C.
            \]
        \end{theorem}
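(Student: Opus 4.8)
Since this theorem is quoted from \cite{CadoretTamagawa} (see also \cite{ClarkPollack}), a proof here amounts to summarizing the strategy there: make two reductions, then feed a tower of modular curves into Faltings's theorem, the crux being to promote a level-by-level finiteness statement into a uniform bound. First I would dispose of the determinant: since $\det\rho_{E,\ell^\infty}$ is the $\ell$-adic cyclotomic character, its image in $\Z_\ell^\times$ is $\Gal(k(\mu_{\ell^\infty})/k)$, of index at most $[k\cap\Q(\mu_{\ell^\infty}):\Q]\leq d$, so it suffices to bound the index of $\rho_{E,\ell^\infty}(\Gal_k)\cap\SL_2(\Z_\ell)$ in $\SL_2(\Z_\ell)$. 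By the standard lifting lemma --- a closed subgroup of $\SL_2(\Z_\ell)$ whose reduction modulo $\ell^{n_0}$ is all of $\SL_2(\Z/\ell^{n_0}\Z)$ must be all of $\SL_2(\Z_\ell)$, where $n_0=1$ for $\ell\geq 5$ and is a small explicit constant for $\ell\in\{2,3\}$ --- this reduces to producing $N=N(d,\ell)$ such that whenever the mod-$\ell^N$ image of $E$ does not contain $\SL_2(\Z/\ell^N\Z)$, already its mod-$\ell^{n_0}$ image does not; that is, a uniform level bound within the ``small image'' regime.

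Next I would stratify that regime. If $\rho_{E,\ell}(\Gal_k)$ does not contain $\SL_2(\F_\ell)$, then by Serre's classification of the subgroups of $\GL_2(\F_\ell)$ it lies in a Borel subgroup, in the normalizer of a split or nonsplit Cartan subgroup, or (for finitely many $\ell$, and finitely many conjugacy classes, handled similarly) in the preimage of an exceptional subgroup. In the first three cases, if the mod-$\ell^n$ image has the corresponding large-index shape, then $E$ gives rise to a point of degree at most $d$ on a modular curve $X_H$ squeezed between $X(\ell^n)$ and one of $X_0(\ell^n)$, $X^+_{\mathrm{sp}}(\ell^n)$, $X^+_{\mathrm{ns}}(\ell^n)$. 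By Abramovich's gonality bound~\cite{Abramovich-Gonality} the gonalities of these curves grow with $n$, so by Frey's criterion together with Faltings's theorem (the Frey--Faltings statement recalled in the introduction), for $n$ large each such curve has only finitely many points of degree at most $d$, the non-cuspidal ones lying over a finite set of $\Qbar$-points.

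The hard part is that ``finitely many bad points at each level $n$'' does not by itself bound the level: a priori a new bad point could appear at every level, leaving room for a sequence $E_i/k_i$ with $[k_i:\Q]\leq d$ whose $\ell$-adic index tends to infinity. The remedy --- the genuinely new input of \cite{CadoretTamagawa} --- is a limiting argument inside the tower $\cdots\to X_{H_{n+1}}\to X_{H_n}\to\cdots$: the set of degree-$\leq d$ points of $X_{H_n}$ that lift to degree-$\leq d$ points at all higher levels must stabilize, since an elliptic curve that lifts arbitrarily high would have $\ell$-adic image inside $\bigcap_n H_n$, a subgroup of infinite index, contradicting Serre's open image theorem. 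Propagating the Faltings finiteness uniformly down the tower --- so that the stabilization level depends only on $d$, $\ell$, and the (boundedly many) towers in play --- is precisely the technical core of \cite{CadoretTamagawa}, and for that step I would simply cite \cite{CadoretTamagawa, ClarkPollack}. Assembling the finitely many cases then yields $C=C(d,\ell)$.
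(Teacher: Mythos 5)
The paper does not prove this theorem; it quotes it from \cite{CadoretTamagawa} and \cite{ClarkPollack}, so there is no internal proof to compare against, and deferring to those references for the technical core is appropriate. Your summary of the strategy is broadly accurate: dispose of the determinant via cyclotomy, invoke the $\SL_2(\Z_\ell)$ lifting lemma, stratify by the shape of the mod-$\ell$ image, and feed the resulting modular towers into Abramovich's gonality bound together with Frey--Faltings. Two imprecisions are worth flagging. First, the reduction you state --- producing $N$ so that whenever the mod-$\ell^N$ image fails to contain $\SL_2(\Z/\ell^N\Z)$, the mod-$\ell^{n_0}$ image already fails to contain $\SL_2(\Z/\ell^{n_0}\Z)$ --- is an immediate consequence of the lifting lemma for \emph{any} $N\geq n_0$ and carries no uniformity in $d$; what you actually need (and what your parenthetical ``uniform level bound'' correctly names) is an $N(d,\ell)$ bounding the \emph{level} of $\im\rho_{E,\ell^{\infty}}$, i.e., an $N$ past which the image is the full preimage of its mod-$\ell^N$ reduction. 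Second, the stabilization heuristic you give --- a single elliptic curve lifting through every level of a tower would have $\ell$-adic image inside a non-open subgroup, contradicting Serre's open image theorem --- rules out one persistent curve but does not by itself preclude a sequence of \emph{distinct} non-CM curves $E_i/k_i$, each dropping out of the tower at a different level $n_i\to\infty$, with index tending to infinity. Ruling that out uniformly is precisely the content of \cite{CadoretTamagawa}, and since you explicitly punt to it the treatment is acceptable for a cited result; just be careful that the appeal to Serre's open image theorem is not mistaken for the step that closes the argument.
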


\section{Subgroups of $\GL_2(\Zhat)$}\label{sec:AdelicGroups}

    The proofs in this paper involve a detailed study of the mod-$n$, $\ell$-adic and adelic Galois representations associated to elliptic curves.  As such, we use a number of properties of closed subgroups of $\GL_2(\Zhat)$ and subgroups of $\GL(\Z/n\Z)$ that we record here.  Throughout $G$ denotes a subgroup of $\GL_2(\Zhat)$.

    In Section~\ref{sec:Goursat}, we state Goursat's lemma.  In Section~\ref{sec:Kernels} we show that {if $\ell=5$ and $G_5 = \GL_2(\Z/5\Z)$ or if $\ell>5$ is a prime such that $G_{\ell} \supset \SL_2(\Z/\ell\Z)$}, then for any integer $n$ relatively prime to $\ell$, the kernel of the projection $G_{\ell^sn}\to G_n$ is large, in particular, it contains $\SL_2(\Z/\ell^s\Z)$.  This proof relies on a classification of subquotients of $\GL_2(\Z/n\Z)$: that $\GL_2(\Z/n\Z)$ can contain a subquotient isomorphic to {$\PGL_2(\Z/5\Z)$ or $\PSL_2(\Z/\ell\Z)$ for $\ell>5$ only if $5 | n$ or $\ell | n$ respectively}.  This result is known in the case $\ell> 5$ (see \cite[Appendix, Corollary 11]{CojocaruKani}), but we are not aware of a reference in the case $\ell = 5$.  In Section~\ref{sec:LevelIndex} we review results of Lang and Trotter that show that the level of a finite index subgroup of $\Gl_2(\Z_{\ell})$ can be bounded by its index.  Finally in Section~\ref{sec:madiclevel} we show how to obtain the $m$-adic level of a group from information of its $\ell$-adic components.

    \subsection{Goursat's Lemma}\label{sec:Goursat}
    \begin{lemma}[{Goursat's Lemma, see e.g., \cite[pg75]{Lang-algebra} or \cite{Goursat89}}]\label{lem:Goursat}
        Let $G, G'$ be groups and let $H$ be a subgroup of $G \times G'$ such that the two projection maps 
        \[
            \rho \colon H \to G \quad \textup{ and } \quad \rho' \colon H \to G'
        \] 
        are surjective. Let $N \coloneqq \ker(\rho)$ and $N' \coloneqq \ker (\rho')$; one can identify $N$ as a normal subgroup of $G'$ and $N'$ as a normal subgroup of $G$. Then the image of $H$ in $G/N' \times G'/N$ is the graph of an isomorphism
        \[ 
            G/N' \simeq G'/N.
        \]
    \end{lemma}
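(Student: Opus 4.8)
The plan is to construct the claimed isomorphism $G/N' \simeq G'/N$ directly and then observe that its graph is exactly the image of $H$. The first order of business is to pin down the two normality claims, since the statement identifies $N$ with a subgroup of $G'$ and $N'$ with a subgroup of $G$ via $N = \{g' \in G' : (e,g') \in H\}$ and $N' = \{g \in G : (g,e) \in H\}$. To see $N' \trianglelefteq G$, take any $g \in G$; by surjectivity of $\rho$ there is $g'$ with $(g,g') \in H$, and then for $n \in N'$ one has $(g,g')(n,e)(g,g')^{-1} = (gng^{-1},e) \in H$, so $gng^{-1} \in N'$. The argument for $N \trianglelefteq G'$ is symmetric.

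Next I would define $\varphi \colon G/N' \to G'/N$ by sending $gN'$ to $g'N$, where $g'$ is any element with $(g,g') \in H$ (such a $g'$ exists since $\rho$ is surjective). The substantive step is checking that $\varphi$ is well defined: if $(g,g_1'), (g,g_2') \in H$ then $(e, g_1'(g_2')^{-1}) \in H$, so $g_1'(g_2')^{-1} \in N$ and the value is independent of the chosen lift; and if $g_1 N' = g_2 N'$, writing $g_1 g_2^{-1} = m \in N'$ and choosing $(g_i, g_i') \in H$, the identity $(m,e)^{-1}(g_1 g_2^{-1}, g_1'(g_2')^{-1}) = (e, g_1'(g_2')^{-1}) \in H$ gives $g_1' N = g_2' N$. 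Once well-definedness is established, $\varphi$ is a homomorphism because $H$ is a subgroup, it is surjective because $\rho'$ is surjective, and it is injective because $\varphi(gN') = N$ forces some $(g,g') \in H$ with $g' \in N$, whence $(g,e) = (g,g')(e,g')^{-1} \in H$ and $g \in N'$.

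Finally I would read off the conclusion: the image of $H$ in $G/N' \times G'/N$ is precisely the set of pairs $(gN', g'N)$ with $(g,g') \in H$, which by the definition of $\varphi$ are exactly the pairs $(gN', \varphi(gN'))$; that is, the image is the graph of the isomorphism $\varphi$. The only mildly delicate point -- the main obstacle, such as it is -- is keeping the two incarnations of $N$ and $N'$ (as subgroups of $G \times G'$ versus as subgroups of $G'$ and $G$) straight while verifying well-definedness; everything else is a routine element-chasing argument.
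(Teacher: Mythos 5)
Your proof is correct and is essentially the standard textbook argument (the one you would find in the cited references); the paper itself simply quotes the lemma and cites Lang and Goursat without reproducing a proof. All the element-chasing checks out: the normality of $N'$ in $G$ via conjugation by a lift $(g,g') \in H$, both halves of well-definedness of $\varphi$, the homomorphism and bijectivity properties, and the final identification of the image of $H$ with the graph.
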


    \subsection{Kernels of reduction maps}\label{sec:Kernels}
    \begin{prop}\label{prop:surjectiveSL}
        {Let $\ell \geq 5$ be a prime.  Assume that $G_{\ell} \supset \SL_2(\Z/\ell\Z)$ when $\ell>5$ and $G_{\ell} = \GL_2(\Z/\ell\Z)$ when $\ell=5$. } Then 
        $\SL_2(\Z/\ell^s \Z) \subset \ker (G_{\ell^s n} \to G_n)$ for any positive integer $n$ with $\ell\nmid n$.
    \end{prop}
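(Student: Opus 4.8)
The plan is to reduce the statement to a question about subgroups of $\SL_2(\Z/\ell^s\Z)$ and then use the well-known structure of the congruence subgroup filtration there. Write $H := \ker(G_{\ell^s n} \to G_n)$, viewed as a normal subgroup of $G_{\ell^s n} \subset \GL_2(\Z/\ell^s\Z) \times \GL_2(\Z/n\Z)$; concretely $H$ consists of the elements of $G_{\ell^s n}$ whose second coordinate is trivial, so $H$ sits inside $\SL_2(\Z/\ell^s\Z) \times \{I\}$ and we may regard $H$ as a normal subgroup of $G_{\ell^s}$ (its image under the first projection). By the Goursat setup (Lemma~\ref{lem:Goursat}), applied to $G_{\ell^s n} \subset G_{\ell^s} \times G_n$, the quotient $G_{\ell^s}/H$ is isomorphic to a quotient of $G_n$, hence is a subquotient of $\GL_2(\Z/n\Z)$. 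So it suffices to show: a normal subgroup $H \trianglelefteq G_{\ell^s}$ with $G_{\ell^s}/H$ a subquotient of $\GL_2(\Z/n\Z)$ must contain $\SL_2(\Z/\ell^s\Z)$, given the hypothesis on $G_\ell$.

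First I would dispose of the case $s=1$ directly. Under the hypothesis, $G_\ell$ contains $\SL_2(\Z/\ell\Z)$, and $\SL_2(\Z/\ell\Z)$ for $\ell \geq 5$ is perfect with $\PSL_2(\Z/\ell\Z)$ simple nonabelian. If $H \cap \SL_2(\Z/\ell\Z)$ were a proper subgroup, then its image in $\PSL_2(\Z/\ell\Z)$ is a proper normal subgroup, so trivial (when $\ell > 5$) — forcing $H \cap \SL_2 \subseteq \{\pm I\}$ and hence $\SL_2(\Z/\ell\Z)$ (or $\PSL_2$, when $\ell=5$ one uses $G_5 = \GL_2$ and the corresponding statement for $\PGL_2(\Z/5\Z)$) to be a subquotient of $\GL_2(\Z/n\Z)$. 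But that is exactly what the classification of subquotients described in Section~\ref{sec:Kernels} rules out when $\ell \nmid n$: $\GL_2(\Z/n\Z)$ has no subquotient isomorphic to $\PSL_2(\Z/\ell\Z)$ (resp. $\PGL_2(\Z/5\Z)$). This contradiction forces $H \supseteq \SL_2(\Z/\ell\Z)$.

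For general $s$ I would induct on $s$, using the congruence filtration of $\SL_2(\Z/\ell^s\Z)$. Let $K_i = \ker(\SL_2(\Z/\ell^s\Z) \to \SL_2(\Z/\ell^i\Z))$. By the $s=1$ argument (applied to the image of $G_{\ell^s n}$ mod $\ell$) together with the surjectivity of $G_{\ell^s} \to G_\ell$, one gets $H \cdot K_1 \supseteq \SL_2(\Z/\ell^s\Z)$; equivalently $H$ surjects onto $\SL_2(\Z/\ell\Z)$. Now the successive quotients $K_i/K_{i+1}$ are abelian (isomorphic to a subgroup of $\mathfrak{sl}_2(\F_\ell)$), and the key commutator fact is that for $\ell \geq 5$ the action of $\SL_2(\Z/\ell\Z)$ on each $K_i/K_{i+1} \cong \mathfrak{sl}_2(\F_\ell)$ (adjoint representation) has no nonzero invariants and, more to the point, $[\SL_2, K_i] \equiv K_{i+1} \pmod{K_{i+2}}$ — i.e. the whole graded piece is hit by commutators with $\SL_2$. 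Since $H$ is normal in $G_{\ell^s} \supseteq$ (a group surjecting onto $\SL_2(\Z/\ell\Z)$ and containing $K_1$ up to $H$), one shows inductively that $H \supseteq K_i$ for all $i$: if $H \supseteq K_{i+1}$ and $H$ contains enough of $\SL_2$ to generate via commutators, then $[H, K_i] \equiv K_{i+1}$ modulo $K_{i+2} \subseteq H$ gives $K_i \subseteq H$. Combining $H \supseteq K_1$ with $H$ surjecting onto $\SL_2(\Z/\ell\Z)$ yields $H \supseteq \SL_2(\Z/\ell^s\Z)$, which is the claim.

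The main obstacle I anticipate is the precise commutator bookkeeping in the filtration step: one needs the fact that for $\ell \geq 5$ the $\SL_2(\F_\ell)$-module $\mathfrak{sl}_2(\F_\ell)$ under the adjoint action is such that commutators with a subgroup surjecting onto $\SL_2(\F_\ell)$ recover the entire graded piece $K_i/K_{i+1}$ — this is where the hypothesis $\ell \geq 5$ (excluding the small primes where $\SL_2$ is solvable or $\PSL_2$ fails to be simple, and where the adjoint module can have unexpected submodules or invariants) is genuinely used. The cleanest route is probably to invoke directly, or re-derive, the standard statement that a closed subgroup of $\SL_2(\Z_\ell)$ surjecting onto $\SL_2(\Z/\ell\Z)$ is all of $\SL_2(\Z_\ell)$ for $\ell \geq 5$ (a lemma of Serre), and transport it through the finite quotient; combined with the subquotient classification for the mod-$\ell$ obstruction, this makes the argument go through.
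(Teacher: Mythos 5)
Your overall route is the same as the paper's: set up Goursat's lemma for $G_{\ell^s n} \subset G_{\ell^s}\times G_n$, handle $s=1$ by showing the kernel of $\varpi_1\colon G_{\ell n}\to G_n$ must contain $\SL_2(\Z/\ell\Z)$ (using the classification of subquotients of $\GL_2(\Z/n\Z)$ to rule out the alternative), and then lift to $s>1$. The paper does exactly this; the difference is in which intermediate facts are invoked (the paper cites Artin's theorem~\cite[Thm.~4.9]{Artin57} for the $s=1$ step and \cite[Appendix, Lemma~12]{CojocaruKani} for the lifting step). A few of your details need attention.

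First, the case $\ell=5$ of the $s=1$ step has a genuine gap. Your simplicity argument shows $H\cap\SL_2(\Z/5\Z)\subseteq\{\pm I\}$, which only gives $\PSL_2(\Z/5\Z)$ (or $\SL_2(\Z/5\Z)$) as a subquotient of $\GL_2(\Z/n\Z)$. That is \emph{not} a contradiction: as the paper notes before Lemma~\ref{lem:Subquo}, $\PSL_2(\Z/5\Z)\cong A_5$ can occur as a subquotient of $\GL_2(\Z/n\Z)$ with $5\nmid n$ (e.g.\ inside $\GL_2(\Z/11\Z)$). What is needed is the stronger claim that $\PGL_2(\Z/5\Z)$ is a subquotient, which in turn requires $H\subseteq Z(\GL_2(\Z/5\Z))$, not merely $H\cap\SL_2\subseteq\{\pm I\}$. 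Getting from the latter to the former is exactly the content of Artin's theorem (a normal subgroup of $\GL_2(\F_\ell)$ not containing $\SL_2$ lies in the center); one can re-derive it from $[H,\SL_2]\subseteq H\cap\SL_2\subseteq\{\pm I\}$ together with the fact that $\PSL_2(\Z/5\Z)$ has trivial centralizer in $\PGL_2(\Z/5\Z)$, but your write-up skips this step, and it is the crux of why $\ell=5$ is treated separately.

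Second, the assertion that ``$H$ sits inside $\SL_2(\Z/\ell^s\Z)\times\{I\}$'' is false: an element $(g,I)$ of $\ker(G_{\ell^s n}\to G_n)$ need not have $\det g=1$, since $G$ is an arbitrary subgroup of $\GL_2(\Zhat)$ (and even in the Galois-image setting, killing the $n$-component of the cyclotomic character does not kill the $\ell^s$-component). This propagates into your lifting step: when you write ``Combining $H\supseteq K_1$ with $H$ surjecting onto $\SL_2(\Z/\ell\Z)$ yields $H\supseteq\SL_2(\Z/\ell^s\Z)$,'' the lift $h\in H$ of $\bar g\in\SL_2(\Z/\ell\Z)$ need not satisfy $\det h=1$, so $h^{-1}g$ lands in $\ker(\GL_2(\Z/\ell^s\Z)\to\GL_2(\Z/\ell\Z))$ rather than in $K_1$. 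The standard fix is to pass to commutators first ($[H,H]\subseteq\SL_2$ and still surjects onto $\SL_2(\Z/\ell\Z)$ by perfectness), which is essentially the lemma you mention at the end. Relatedly, the displayed commutator claim $[\SL_2,K_i]\equiv K_{i+1}\pmod{K_{i+2}}$ is off by a grading shift: the relevant fact is that conjugation-commutators with (lifts of) $\SL_2(\Z/\ell\Z)$ fill up $K_i/K_{i+1}\cong\mathfrak{sl}_2(\F_\ell)$, i.e.\ $[\SL_2,K_i]K_{i+1}=K_i$, which is what makes the adjoint action argument close. None of this is fatal -- your closing remark that one should just invoke the standard ``surjection mod $\ell$ implies equality'' lemma for $\SL_2(\Z_\ell)$, $\ell\geq 5$, is the right instinct and is exactly the role played by \cite[Appendix, Lemma~12]{CojocaruKani} in the paper -- but as written the $\ell=5$ subcase and the lifting step both have holes that must be filled.
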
 

    {For $\ell>5$, a key ingredient in the proof is a classification result that implies that $\PSL_2(\Z/\ell\Z)$ cannot appear as a subquotient of $G_n$ \cite[Appendix, Corollary 11]{CojocaruKani}.  This is false when $\ell = 5$ (for instance, there is a subquotient of $\GL_2(\Z/11\Z)$ that is isomorphic to $\PSL_2(\Z/5\Z)$).  However, we prove that $\PGL_2(\Z/5\Z)$ cannot be isomorphic to a subquotient of $G_n$ unless $5|n$.}

    \begin{lemma}\label{lem:Subquo}
        {Let $n$ be a positive integer.} If $\GL_2(\Z/n\Z)$ has a subquotient that is isomorphic to {$\PGL_2(\Z/5\Z)$}, then {$5\mid n$}.
	\end{lemma}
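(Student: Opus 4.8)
The plan is to reduce to the case of prime power modulus and then argue directly with the known classification of subgroups of $\GL_2$ over small fields. First I would use the Chinese Remainder Theorem decomposition $\GL_2(\Z/n\Z) \isom \prod_{p^a \| n} \GL_2(\Z/p^a\Z)$. Any subquotient of a finite direct product is a subquotient of a direct product of subquotients of the factors, so by Goursat's lemma (Lemma \ref{lem:Goursat}) applied inductively, a simple group like $\PSL_2(\Z/5\Z)$ (which is the unique nonabelian composition factor of $\PGL_2(\Z/5\Z) \isom S_5$, since $\PSL_2(\Z/5\Z) \isom A_5$) must arise as a composition factor of some single $\GL_2(\Z/p^a\Z)$. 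More precisely, I want to show $\PGL_2(\Z/5\Z)$ itself — not just $A_5$ — appears only when $5 \mid n$, so I would instead argue that if $\PGL_2(\Z/5\Z)$ is a subquotient of $\prod \GL_2(\Z/p^a\Z)$, then projecting the relevant subgroup to each factor and again invoking Goursat forces $\PGL_2(\Z/5\Z)$ (being generated by an index-$2$ normal subgroup which is simple) to be a subquotient of a single $\GL_2(\Z/p^a\Z)$.

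Next I would reduce from prime power $p^a$ to the prime $p$. The kernel of $\GL_2(\Z/p^a\Z) \to \GL_2(\Z/p\Z)$ is a $p$-group, so any composition factor of $\GL_2(\Z/p^a\Z)$ is either $\Z/p$ or a composition factor of $\GL_2(\Z/p\Z)$. Since $A_5$ is nonabelian simple, if $p \neq 5$ the factor $A_5$ (hence $\PGL_2(\Z/5\Z) = S_5$) must come from $\GL_2(\Z/p\Z)$. To handle $\PGL_2(\Z/5\Z)$ and not merely $A_5$, I would track the level structure: the subquotient $\PGL_2(\Z/5\Z)$ of $\GL_2(\Z/p^a\Z)$ pulls back to a subquotient of $\GL_2(\Z/p\Z)$ once we quotient out the $p$-part, because an extension of $A_5$ by a $p$-group with $p \neq 5$ splits off $A_5$ on a subgroup, and conjugating the $S_5 = \PGL_2(\Z/5\Z)$ structure through — here I would cite that $A_5$ has trivial Schur multiplier contribution mod $p$-groups and that $\operatorname{Out}(A_5) = \Z/2$ is realized. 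So it suffices to show $\PGL_2(\Z/5\Z) \isom S_5$ is not a subquotient of $\GL_2(\Z/p\Z)$ for any prime $p \neq 5$.

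Finally, for the prime case: the subgroups of $\GL_2(\F_p)$ are classified (Dickson's theorem), so I would check that no subgroup of $\GL_2(\F_p)$ for $p \neq 5$ has a quotient isomorphic to $S_5$. The relevant constraint is on the projective image in $\PGL_2(\F_p)$: a subgroup of $\PGL_2(\F_p)$ is cyclic, dihedral, a subgroup of the normalizer of a Cartan, or has projective image among $A_4, S_4, A_5, \PSL_2(\F_p), \PGL_2(\F_p)$. Since $S_5$ is not isomorphic to $A_4, S_4,$ or $A_5$, the only way to get $S_5$ as a quotient of a subgroup $H \leq \GL_2(\F_p)$ is for the image of $H$ in $\PGL_2(\F_p)$ to contain $\PSL_2(\F_p)$; but then $|\PSL_2(\F_p)| = p(p^2-1)/\gcd(2,p-1)$ divides $|S_5| \cdot (\text{abelian scalar part})$ forces $p \in \{5\}$ after a short divisibility check (noting $A_5 \isom \PSL_2(\F_5)$ is the only projective special linear group that is a composition factor of $S_5$, and $\PSL_2(\F_4) \isom \PSL_2(\F_5) \isom A_5$). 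I expect the main obstacle to be the bookkeeping in the second paragraph: cleanly upgrading the statement from ``$A_5$ is not a composition factor'' (which is the easy consequence of Dickson) to ``$\PGL_2(\Z/5\Z) = S_5$ is not a subquotient,'' ensuring the outer automorphism and the passage through $p$-group kernels and CRT factors are handled without gaps. A clean way to do this is to note that any subquotient $S_5$ of $\GL_2(\Z/n\Z)$ has a normal subgroup $A_5$, which as argued is a composition factor of some $\GL_2(\Z/p\Z)$ with $p \neq 5$ impossible, so already $A_5$ cannot appear, a fortiori neither can $S_5$ — which sidesteps the outer-automorphism issue entirely.
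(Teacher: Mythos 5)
Your final ``clean way''---deduce the lemma from the claim that $A_5$ cannot appear as a subquotient, so a fortiori $S_5 = \PGL_2(\Z/5\Z)$ cannot---is false, and the error traces to a conflation of \emph{subquotient} with \emph{composition factor} that also infects the first two paragraphs. You assert that a simple subquotient of $\GL_2(\Z/n\Z)$ must be a composition factor of some $\GL_2(\Z/p^a\Z)$, but a simple subquotient of a group need not be a composition factor of it: $A_5 \cong \PSL_2(\Z/5\Z)$ is a subgroup of the simple group $\PSL_2(\F_{11})$, hence a subquotient of $\GL_2(\Z/11\Z)$, yet it is not a composition factor of $\GL_2(\Z/11\Z)$ (whose unique nonabelian composition factor is $\PSL_2(\F_{11})$). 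The paper even flags this example explicitly before the statement of the lemma: $\PSL_2(\Z/5\Z)$ \emph{is} a subquotient of $\GL_2(\Z/11\Z)$, which is the entire reason the $\ell=5$ case cannot simply be cited from Cojocaru--Kani and requires the statement about $\PGL_2(\Z/5\Z)$ rather than $\PSL_2(\Z/5\Z)$. If your last sentence were correct it would ``prove'' that $A_5$ is never a subquotient of $\GL_2(\Z/n\Z)$ with $5 \nmid n$, which is false.

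The real content of the lemma is precisely the passage from $A_5$ (which can occur as a subquotient for $p\ne 5$) to the overgroup $S_5$ (which cannot), and this upgrade must be carried through the CRT decomposition; it cannot be discarded at the end. The paper does this by first projecting to $\prod_p \PGL_2(\Z/p\Z)$ (killing the solvable kernel, which $\PGL_2(\Z/5\Z)$ cannot meet nontrivially) and then running a genuine induction on the set of primes using the structure of normal subgroups of $\PSL_2(\Z/5\Z)\times\PSL_2(\Z/5\Z)$ to force $\PGL_2(\Z/5\Z)$ to land in a single factor; your first-paragraph sketch gestures at such a reduction but does not supply it, and Schur--Zassenhaus in your second paragraph also fails for $p=2,3$ since $2,3 \mid |A_5|$. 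Your treatment of the single-prime case in the third paragraph is fine and is a legitimate alternative to the paper's argument via Suzuki's theorem---counting composition factors of a subgroup of $\GL_2(\F_p)$ with quotient $S_5$ does force $p=5$---but only after the reduction to a single prime has been honestly established for $S_5$ itself.
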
 
    \begin{remark}
        Throughout the proof, we freely use the isomorphism $\PGL_2(\Z/5\Z)\isom S_5$ and $\PSL_2(\Z/5\Z)\isom A_5$ to deduce information about subgroups and subquotients contained in these groups.
    \end{remark}
    \begin{proof}
        The lemma is a straightforward consequence of the following $3$ claims (Claim~\eqref{Claim:InductionPGL} is applied to the set $T = \Supp(n)$).
        \begin{enumerate}
            \item The projection
            \[
                \GL_2(\Z/n\Z) \to \prod_{{p \in\Supp(n)}}\PGL_2(\Z/p\Z) 
            \]
            is an injection when restricted to any subquotient isomorphic to $\PGL_2(\Z/5\Z)$.\label{claim:ProjectiontoProdPGL}
            \item Let $\emptyset\neq S\subsetneq T$ be finite sets of primes.  If $\prod_{p\in T}\PGL_2(\Z/p\Z)$ has a subquotient isomorphic to $\PGL_2(\Z/5\Z)$ then so does at least one of
            \[
                \prod_{p\in S}\PGL_2(\Z/p\Z) \quad\textup{or}\quad
                \prod_{p\in T-S}\PGL_2(\Z/p\Z).
            \]
            Hence, by induction, if $\prod_{p\in T}\PGL_2(\Z/p\Z)$ has a subquotient isomorphic to $\PGL_2(\Z/5\Z)$ then $\PGL_2(\Z/p\Z)$ has a subquotient isomorphic to $\PGL_2(\Z/5\Z)$ for some $p\in T$.\label{Claim:InductionPGL}
            \item If $p$ is a prime and $\PGL_2(\Z/p\Z)$ has a subquotient isomorphic to $\PGL_2(\Z/5\Z)$, then $p=5$.\label{Claim:IsomorphicPGLs}
        \end{enumerate}
        
        \textbf{Proof of Claim~\ref{claim:ProjectiontoProdPGL}:} Let $N\vartriangleleft G < \GL_2(\Z/n\Z)$ be subgroups and let $\pi$ denote the surjective map
        \[
            \pi\colon\GL_2(\Z/n\Z) \to \prod_{p \in \Supp(n)}\PGL_2(\Z/p\Z).  
        \]
        Using the isomorphism theorems, we obtain the following
        \begin{equation}\label{eq:IsomorphismTheorems}
            \frac{\pi(G)}{\pi(N)}\isom \frac{G/(G\cap \ker \pi)}{N/(N\cap\ker \pi)}
            \isom\frac{G}{N\cdot(G\cap\ker\pi)}\isom
            \frac{G/N}{(G\cap \ker\pi)/(N\cap \ker\pi)}.
        \end{equation}
        For each prime $p$, the kernel of $\GL_2(\Z/{p}^m\Z)\to \GL_2(\Z/p\Z)$ is a $p$-group and the kernel of $\GL_2(\Z/p\Z)\to \PGL_2(\Z/p\Z)$ is a cyclic group, so $\ker \pi$ is a direct product of solvable groups. Hence $\ker \pi$ is solvable and so is $(G\cap\ker\pi)/(N\cap \ker \pi)$ for any $N \vartriangleleft G < \GL_2(\Z/n\Z)$. Since the only solvable normal subgroup of $\PGL_2(\Z/5\Z)$ is the trivial group, if $G/N \isom \PGL_2(\Z/5\Z)$, then $\pi(G)/\pi(N)\isom G/N$.

        \textbf{Proof of Claim~\ref{Claim:InductionPGL}:} Let $N\vartriangleleft G< \prod_{p\in T}\PGL_2(\Z/p\Z)$ be subgroups such that $G/N\isom \PGL_2(\Z/5\Z)$.  Let $H$ be the normal subgroup of $G$ containing $N$ such that $H/N\isom\PSL_2(\Z/5\Z)$.  Consider the following two maps
        \[
            \pi_S \colon \prod_{p\in T}\PGL_2(\Z/p\Z)\to \prod_{p\in S}\PGL_2(\Z/p\Z)
            \quad \textup{and}\quad
            \pi_{S^c} \colon \prod_{p\in T}\PGL_2(\Z/p\Z)\to \prod_{p\in T-S}\PGL_2(\Z/p\Z).  
        \]
        Since the only quotient of $\PGL_2(\Z/5\Z)$ that contains a subgroup isomorphic to $\PSL_2(\Z/5\Z)$ is $\PGL_2(\Z/5\Z)$ itself, by (\ref{eq:IsomorphismTheorems}) it suffices to show that either $\pi_S(H)/\pi_S(N)$ or $\pi_{S^c}(H)/\pi_{S^c}(N)$ is isomorphic to $\PSL_2(\Z/5\Z)$.  Furthermore, since $\PSL_2(\Z/5\Z)$ is simple, it suffices to rule out the case where $\pi_S(H)= \pi_S(N)$ and $\pi_{S^c}(H)= \pi_{S^c}(N)$, which by the isomorphism theorems are equivalent, respectively, to the conditions that
        \[
            \frac{H\cap \ker\pi_S}{N\cap \ker\pi_S}\isom \PSL_2(\Z/5\Z)
            \quad \textup{and} \quad
            \frac{H\cap \ker\pi_{S^c}}{N\cap \ker\pi_{S^c}}\isom \PSL_2(\Z/5\Z).
        \]
        Let
        \begin{align*}
            H_S := & (H\cap \ker \pi_S)\cdot(H\cap \ker\pi_{S^c}) \isom (H\cap \ker \pi_S)\times (H\cap \ker\pi_{S^c}),\\
            N_S := & (N\cap \ker \pi_S)\cdot(N\cap \ker\pi_{S^c}) \isom (N\cap \ker \pi_S)\times (N\cap \ker\pi_{S^c}).
        \end{align*}
        Assume by way of contradiction that $H_S/N_S\isom \frac{H\cap \ker\pi_S}{N\cap \ker\pi_S} \times \frac{H\cap \ker\pi_{S^c}}{N\cap \ker\pi_{S^c}} \isom \left(\PSL_2(\Z/5\Z)\right)^2$, and consider the normal subgroup $(H_S\cap N)/N_S$.  The isomorphism theorems yield an inclusion
        \[
            \frac{H_S/N_S}{(H_S\cap N)/N_S}\isom H_S/(H_S\cap N) \isom H_SN/N \hookrightarrow H/N \isom \PSL_2(\Z/5\Z),     
        \]
        so $(H_S\cap N)/N_S$ must be a nontrivial normal subgroup of $H_S/N_S$.  However, the only proper nontrivial normal subgroups of $\left(\PSL_2(\Z/5\Z)\right)^2$ are $\PSL_2(\Z/5\Z)\times \{1\}$ or $\{1\}\times \PSL_2(\Z/5\Z)$, so $N_S$ must contain either $H\cap \ker\pi_S$ or $H\cap \ker \pi_{S^c}$, which results in a contradiction.

        \textbf{Proof of Claim~\ref{Claim:IsomorphicPGLs}:}
        Let $G< \PGL_2(\Z/p\Z)$ be a subgroup that has a quotient isomorphic to $\PGL_2(\Z/5\Z)$.  If $p\nmid \#G$, then by~\cite[Section 2.5]{Serre-OpenImage}, $G$ must be isomorphic to a cyclic group, a dihedral group, $A_4$, $S_4$ or $A_5\cong \PSL_2(\Z/5\Z)$, so has no quotient isomorphic to $\PGL_2(\Z/5\Z)$. 
        Thus, $p$ must divide $\#G$.  Then $G\cap \PSL_2(\Z/p\Z)$ is also of order divisible by $p$ and so by \cite[Theorem~6.25, Chapter 3]{Suzuki}, $G\cap \PSL_2(\Z/p\Z)$ is solvable or equal to $\PSL_2(\Z/p\Z)$.  Since $G$ has a quotient isomorphic to $\PGL_2(\Z/5\Z)$, $G\cap \PSL_2(\Z/p\Z)$ cannot be solvable and hence $G = \PGL_2(\Z/p\Z)$ and $p=5$.
    \end{proof}       

    \begin{proof}[Proof of Proposition~\ref{prop:surjectiveSL}]
        Since we have $G_{\ell^sn}<\GL_2(\Z/\ell^sn\Z)\simeq \GL_2(\Z/\ell^s\Z) \times \GL_2(\Z/n\Z)$, there are natural surjective projection maps
		\[
			\pi_{s}: G_{\ell^s n} \to G_{\ell^s} \quad \text{and} \quad \varpi_s: G_{\ell^sn}\to G_n.
		\]
		Observe that $\ker \pi_{s}$ and $\ker \varpi_s$ can be identified as normal subgroups of $G_n$ and $G_{\ell^s}$ respectively, and by Goursat's Lemma (see Lemma~\ref{lem:Goursat}), we have 
            \begin{equation}\label{eq:Goursat}
                G_{\ell^s}/ \ker \varpi_s \isom G_n / \ker \pi_{s}.
            \end{equation}
		We first prove the proposition for the case when $s=1$.
		{By \cite[Theorem 4.9]{Artin57}, $\ker \varpi_1$ either contains $\SL_2(\Z/\ell\Z)$ or is a subgroup of the center $Z(\GL_2(\Z/\ell\Z))$ of $\GL_2(\Z/\ell\Z)$. If $\ker\varpi_1 \subseteq Z(\GL_2(\Z/\ell\Z))$ and $\ell=5$, then the left-hand side of \eqref{eq:Goursat} has a quotient $\PGL_2(\Z/5\Z)$, which contradicts Lemma~\ref{lem:Subquo} since the right-hand side cannot have such a quotient. Similarly, if $\ker \varpi_1 \subseteq Z(\GL_2(\Z/\ell{\Z}))$ and $\ell>5$, then the left-hand side of \eqref{eq:Goursat} has a subquotient $\PSL_2(\Z/\ell\Z)$, which is impossible by \cite[Appendix, Corollary~11]{CojocaruKani}. Therefore, $\ker \varpi_1$ must contain $\SL_2(\Z/\ell\Z)$.}
            
            For $s>1$, since $\varpi_s$ is surjective and factors through
            \[
            	G_{\ell^s n} \subset \GL_2(\Z/\ell^s n\Z) \to \GL_2(\Z/\ell n \Z) \to \GL_2(\Z/n\Z),
            \]
            $\ker \varpi_s \subset \GL_2(\Z/\ell^s \Z)$ maps surjectively onto $\ker\varpi_1\subset \GL_2(\Z/\ell\Z)$. Then the proposition follows from~\cite[Appendix, Lemma 12]{CojocaruKani}. 
	\end{proof}

    \subsection{Bounding the level from the index}\label{sec:LevelIndex}
        \begin{prop}[{\cite[Part I, \S6, Lemmas 2 \& 3]{LangTrotter-FrobeniusDistributions}}]
            \label{prop:MaximalKernelGivesLevel}
            Let $\ell$ be a prime and let $G$ be a closed subgroup of $\GL_2(\Z_{\ell})$.  Set $s_0 = 1$ if $\ell$ is odd and $s_0 = 2$ otherwise.  If 
            \[
                \ker (G \bmod \ell^{s+1} \to G \bmod \ell^s) = 
                I + \Mat_2(\ell^s\Z/\ell^{s+1}\Z)
            \]
            for some $s\geq s_0$,  then
            \[
                \ker (G \to G \bmod \ell^s) = I + \ell^s\Mat_2(\Z_{\ell}).
            \]
        \end{prop}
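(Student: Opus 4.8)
The plan is to recast the statement in terms of the principal congruence subgroups $K_t := I + \ell^t\Mat_2(\Z_\ell) = \ker\bigl(\GL_2(\Z_\ell)\to\GL_2(\Z/\ell^t\Z)\bigr)$ of $\GL_2(\Z_\ell)$. These are open normal subgroups forming a neighbourhood basis of the identity, with $\bigcap_{t\geq 1}K_t=\{I\}$ and $K_t/K_{t+1}\cong\Mat_2(\F_\ell)$ as abelian groups (the abelianness because $[K_t,K_t]\subseteq K_{2t}\subseteq K_{t+1}$, by a direct expansion of a commutator of two elements $I+\ell^t A$, $I+\ell^t B$). Writing $H:=G\cap K_s=\ker(G\to G\bmod\ell^s)$, which is closed since $G$ and $K_s$ are, the hypothesis says exactly that $H$ surjects onto $K_s/K_{s+1}$, and the conclusion to be proved is $H=K_s$. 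Since $H$ is closed and the $K_t$ are open with trivial intersection, it suffices to show that $H$ is dense in $K_s$, i.e.\ that $HK_t=K_s$ for every $t\geq s$.

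The one computational ingredient I would record first is the power formula: if $x\equiv I+\ell^i A\pmod{\ell^{i+1}}$ with $i\geq s_0$, then $x^\ell\equiv I+\ell^{i+1}A\pmod{\ell^{i+2}}$, and hence by iteration $x^{\ell^j}\equiv I+\ell^{i+j}A\pmod{\ell^{i+j+1}}$. This is where the hypothesis $s\geq s_0$ is essential: when $\ell=2$ and $i=1$ one has $(I+2A)^2\equiv I+4(A+A^2)\pmod 8$, so the leading coefficient is not preserved, whereas for $i\geq 2$ (and for $i\geq 1$ when $\ell$ is odd) all the binomial error terms, in particular $\binom{\ell}{2}\ell^{2i}A^2$, are absorbed modulo $\ell^{i+2}$. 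Using this, for each $B\in\Mat_2(\Z_\ell)$ pick (by hypothesis) some $h_B\in H$ with $h_B\equiv I+\ell^s B\pmod{\ell^{s+1}}$; then $h_B^{\ell^{t-s}}\in H$ lies in $K_t$ and has image $B\bmod\ell$ in $K_t/K_{t+1}$. So in fact $H\cap K_t$ already surjects onto $K_t/K_{t+1}$ for every $t\geq s$.

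The induction on $t\geq s$ then runs as follows, with the cases $t=s$ (trivial) and $t=s+1$ (the hypothesis) as base cases. Assuming $HK_t=K_s$, take any $g\in K_s$; choose $h_1\in H$ with $gh_1^{-1}\in K_t$ (possible since $g\in HK_t$ and $K_t$ is normal), write $gh_1^{-1}\equiv I+\ell^t A\pmod{\ell^{t+1}}$, and correct by the element $h_A^{\ell^{t-s}}\in H\cap K_t$ from the previous paragraph, which has the same image in the abelian group $K_t/K_{t+1}$. Then $gh_1^{-1}\bigl(h_A^{\ell^{t-s}}\bigr)^{-1}\in K_{t+1}$, and since $K_{t+1}\trianglelefteq\GL_2(\Z_\ell)$ this gives $g\in K_{t+1}H=HK_{t+1}$; as $g$ was arbitrary, $HK_{t+1}=K_s$. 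Hence $HK_t=K_s$ for all $t\geq s$, so $H$ is dense in $K_s$, and being closed, $H=K_s$, which is the assertion $\ker(G\to G\bmod\ell^s)=I+\ell^s\Mat_2(\Z_\ell)$.

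I expect no genuine obstacle beyond this bookkeeping: the only delicate point is that $s_0$ must be chosen so that the power map $x\mapsto x^\ell$ preserves leading coefficients on congruence subgroups, which fails for $\ell=2$ at level $1$ and forces $s_0=2$ there. (Equivalently, and more conceptually, for $s\geq s_0$ the group $K_s$ is a uniform pro-$\ell$ group whose Frattini subgroup is $\Phi(K_s)=K_s^\ell[K_s,K_s]=K_{s+1}$, so any closed subgroup surjecting onto the Frattini quotient $K_s/K_{s+1}$ must be all of $K_s$ by the topological Burnside basis theorem; the power computation above is precisely the identity $K_s^\ell=K_{s+1}$. The hands‑on version above is essentially the argument of Lang and Trotter.)
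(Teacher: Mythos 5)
Your proof is correct and follows essentially the same route as the paper's: both hinge on the computation that raising to the $\ell$-th power carries level-$i$ congruence data to level $i+1$ for $i\geq s_0$, propagate the level-$s$ hypothesis inductively to all higher levels, and finish with a density-plus-closedness argument (the paper phrases this via the filtration quotients $U_n/U_{n+1}\subset V_n/V_{n+1}$, you via the cosets $HK_t$, but these are two views of the same step). The closing remark about uniform pro-$\ell$ groups and the Frattini quotient is a nice conceptual gloss but not a different proof.
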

        \begin{remark}
            This proof follows the one given by Lang and Trotter.  We repeat it here for the reader's convenience and to show that the proof does give the lemma as stated, even though the statement of~\cite[Part I, \S6, Lemmas 2 \& 3]{LangTrotter-FrobeniusDistributions} is slightly weaker.
        \end{remark}
        \begin{proof}
            For any positive integer $n$, let $U_n := \ker (G \to G \bmod \ell^n)$ and let $V_n := I + \ell^n\Mat_2(\Z_{\ell})$.  Note that for all $n$, $U_n\subset V_n$ and $U_n = U_1 \cap V_n$.  

            Observe that for $s\geq s_0$, raising to the $\ell^{th}$ power gives the following maps
            \[
                V_s/V_{s+1} \stackrel{\sim}{\to} V_{s+1}/V_{s+2}, 
                \quad\textup{and}\quad
                U_s/U_{s+1} \hookrightarrow U_{s+1}/U_{s+2}.
            \]
            By assumption, the natural inclusion $U_s/U_{s+1} \subset V_s/V_{s+1}$ is an isomorphism for some $s\geq s_0$.  Combining these facts, we get the following commutative diagram for any positive $k$:
            \[
            \begin{tikzcd}        
                U_s/U_{s+1} \arrow[hook]{d} \arrow{r}{\sim} & V_s/V_{s+1}\arrow{d}{\sim}\\
                U_{s+k}/U_{s+k+1}   \arrow[hook]{r} & V_{s+k}/V_{s+k+1},
            \end{tikzcd}
            \]
            where the vertical maps are raising to the $({\ell^{k}})^{th}$ power and the horizontal maps are the natural inclusions.  Hence, $U_{s+k}/U_{s+k+1} = V_{s+k}/V_{s+k+1}$ for all $k\geq 0$ and so $U_s = V_s$.
        \end{proof}

    \subsection{Determining $m$-adic level from level of $\ell$-adic components}\label{sec:madiclevel}
            
        \begin{proposition}\label{prop:levelforcomp}
            Let $\ell_1, \dots, \ell_q$ be distinct primes and let $\mm := \prod_{i=1}^q \ell_i$. For $i = 1, \dots, q$, let $t_i\geq 1$ be positive integers and let $\mm_i := \prod_{j \neq i}{\ell_{j}}$.  If $G$ is a closed subgroup of $\GL_2(\Zhat)$ such that $G_{\mm_i\cdot{\ell_i}^{\infty}}=\pi^{-1}(G_{\mm_i{\ell_i}^{t_i}})$ for each $i$,
            then $G_{\mm^{\infty}}=\pi^{-1}(G_M)$ for $M=\prod_{i=1}^q {\ell_i}^{t_i}$.
        \end{proposition}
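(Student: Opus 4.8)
The plan is to reduce the proposition to the claim that $G_{\mm^\infty}$, regarded inside $\GL_2(\Z_\mm) = \prod_{i=1}^q \GL_2(\Z_{\ell_i})$, contains the whole kernel $K := \ker\big(\GL_2(\Z_\mm)\to\GL_2(\Z/M\Z)\big) = \prod_{i=1}^q K_i$, where $K_i := I + \ell_i^{t_i}\Mat_2(\Z_{\ell_i})$. Granting this, the proposition follows at once: since $\Supp(M) = \Supp(\mm)$, the composite $\GL_2(\Zhat)\to\GL_2(\Z_\mm)\to\GL_2(\Z/M\Z)$ is the usual reduction map, so the image of $G_{\mm^\infty}$ modulo $M$ is precisely $G_M$, and any subgroup of $\GL_2(\Z_\mm)$ containing $K$ is the preimage of its image modulo $M$; hence $G_{\mm^\infty} = \pi^{-1}(G_M)$. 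Moreover, $K$ is generated by its coordinate subgroups $K_i^{(i)} := \{I\}\times\cdots\times K_i\times\cdots\times\{I\}$, and $G_{\mm^\infty}$ is closed, so it is enough to show $K_i^{(i)}\subseteq G_{\mm^\infty}$ for each fixed $i$.

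So fix $i$. First I would observe that the projection $p_i\colon \GL_2(\Z_\mm)\to\GL_2(\Z/\mm_i\Z)\times\GL_2(\Z_{\ell_i})$ that reduces the $\ell_j$-component modulo $\ell_j$ for $j\neq i$ and keeps the $\ell_i$-component sends $G_{\mm^\infty}$ onto $G_{\mm_i\cdot\ell_i^\infty}$; by hypothesis the latter equals $\pi^{-1}(G_{\mm_i\ell_i^{t_i}})$, and so contains $\ker\pi = \{I\}\times K_i$. Unwinding this, for every $\kappa\in K_i$ there is $h\in G_{\mm^\infty}$ whose $\ell_i$-component is $\kappa$ and whose $\ell_j$-component, for each $j\neq i$, is congruent to $I$ modulo $\ell_j$, hence lies in the pro-$\ell_j$ group $I+\ell_j\Mat_2(\Z_{\ell_j})$. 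It remains to strip away the components of $h$ at the primes $\ell_j$, $j\neq i$, without disturbing its $\ell_i$-component $\kappa$. For this I would pass to the procyclic subgroup $\overline{\langle h\rangle}\subseteq G_{\mm^\infty}$ and extract the $\ell_i$-part of $h$: pick positive integers $a_n$ with $a_n\equiv 1\pmod{\ell_i^n}$ and $a_n\equiv 0\pmod{\ell_j^n}$ for all $j\neq i$ (Chinese Remainder Theorem); since exponentiation by $\Z_\ell$ is continuous on a pro-$\ell$ group, the powers $h^{a_n}$ converge componentwise to the element $h'$ with $\ell_i$-component $\kappa$ and $\ell_j$-component $I$ for all $j\neq i$. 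Because $G_{\mm^\infty}$ is closed, $h'\in G_{\mm^\infty}$, that is $\kappa^{(i)}\in G_{\mm^\infty}$; letting $\kappa$ range over $K_i$ gives $K_i^{(i)}\subseteq G_{\mm^\infty}$, as needed.

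The crux — the only step that is not routine bookkeeping with the projection maps $\GL_2(\Zhat)\to\GL_2(\Z_m)\to\GL_2(\Z/n\Z)$ — is this final decoupling of the behavior of $h$ at $\ell_i$ from its behavior at the remaining primes, which is exactly where one uses that the $\ell_j$ are distinct and that $K_i$ and the groups $I+\ell_j\Mat_2(\Z_{\ell_j})$ are pro-$\ell_i$, respectively pro-$\ell_j$. An alternative to the limiting argument, better suited to the toolkit already in place, is to apply Goursat's Lemma~\ref{lem:Goursat} to $H^\circ := G_{\mm^\infty}\cap\big(\GL_2(\Z_{\ell_i})\times\prod_{j\neq i}(I+\ell_j\Mat_2(\Z_{\ell_j}))\big)$ inside $\GL_2(\Z_{\ell_i})\times\prod_{j\neq i}\GL_2(\Z_{\ell_j})$: the projection of $H^\circ$ to the first factor contains $K_i$ by the previous paragraph, while the projection to the second factor, and hence every quotient of it, is a pro-$\{\ell_j : j\neq i\}$ group; since no nontrivial pro-$\ell_i$ group embeds in a pro-$\{\ell_j : j\neq i\}$ group, the isomorphism supplied by Goursat's Lemma forces the $\GL_2(\Z_{\ell_i})$-factor of the Goursat kernel to contain $K_i$, i.e.\ $K_i^{(i)}\subseteq G_{\mm^\infty}$.
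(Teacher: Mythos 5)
Your proof is correct, and it takes a genuinely different route from the paper's. The paper stays at the level of finite quotients and argues by cardinality bookkeeping on commutative squares of reduction maps: for a fixed $i$ it compares the kernel of $G_{M\ell_i^{r_i}}\to G_M$ against the kernel of $G_{\mm_i\ell_i^{r_i+t_i}}\to G_{\mm_i\ell_i^{t_i}}$, observing that the horizontal kernels have $\ell_i$-power order while the vertical kernels have order prime to $\ell_i$, so that multiplicativity of cardinalities forces the top kernel to be as large as the bottom (which is maximal by hypothesis); then an induction on the number of primes, using the same coprimality trick, completes the argument. You instead pass directly to the profinite group $G_{\mm^{\infty}}\subseteq \GL_2(\Z_{\mm})$, reduce the claim to the containment $K=\prod_i K_i\subseteq G_{\mm^{\infty}}$ of the kernel of reduction mod $M$, and establish $K_i^{(i)}\subseteq G_{\mm^{\infty}}$ for each $i$ by decoupling the prime $\ell_i$ from the others --- either via the $p$-adic power limit $h^{a_n}\to h'$ (CRT plus continuity of $\Z_{\ell}$-exponentiation on the pro-$\ell$ coordinate groups $K_i$ and $I+\ell_j\Mat_2(\Z_{\ell_j})$, plus closedness of $G_{\mm^{\infty}}$), or via Goursat's Lemma together with the fact that no nontrivial pro-$\ell_i$ group embeds in a pro-$\{\ell_j:j\neq i\}$ group. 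The role that coprimality of the $\ell_j$ plays is more conceptually transparent in your version (you literally extract the $\ell_i$-part of an element $h$), and the reduction to a kernel-containment statement is a cleaner framing; the paper's version is entirely finite and elementary, avoiding any explicit use of the profinite topology beyond the definitions, at the cost of a somewhat more opaque bookkeeping argument.
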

        \begin{proof}
            For any $1\leq i\leq q$ and $r_i\geq 0$, consider the following commutative diagram of natural reduction maps.
            \[
            \xymatrix{
                G_{M {\ell_i}^{r_i}} \ar@{->>}[r]\ar@{->>}[d] & G_{M} \ar@{->>}[d]\\
                G_{\mm_i {\ell_i}^{r_i+t_i}} \ar@{->>}[r] & G_{\mm_i {\ell_i}^{t_i}}}
            \]
            The kernel of the top horizontal map is a subgroup of $I + \Mat_2(M\Z/M\ell_i^{r_i}\Z)$, so its order is a power of $\ell_i$. Similarly, the order of the kernel of the lower horizontal map is a power of $\ell_i$, while the order of the kernels of the vertical maps are coprime to $\ell_i$. {Since $\# \ker(G_{M_i \ell_i^{r_i}} \to G_{M}) \cdot \# \ker(G_{M} \to G_{\mm_i\ell_i^{t_i}})$ is equal to $\# \ker(G_{M_i \ell_i^{r_i}} \to G_{\mm_i\ell_i^{r_i}}) \cdot \# \ker( G_{\mm_i \ell_i^{r_i}} \to G_{\mm_i\ell_i^{t_i}})$,} the kernels of horizontal maps must be isomorphic, and hence $G_{M{\ell_i}^{r_i}}$ is the full preimage of $G_{M}$, by assumption.

            To complete the proof, it remains to show that for any collection of positive integers $\{r_i\}_{i=1}^q$, $G_{M\prod_{i = 1}^q\ell_i^{r_i}}$ is the full preimage of $G_M$.  We do so with an inductive argument.  Let $1\leq q'\leq q$ and let $\{r_i\}_{i=1}^{q'}$ be a collection of positive integers. Consider the following commutative diagram of natural reduction maps.
            \[\xymatrix{
                G_{M\prod_{i=1}^{q'}{\ell_i}^{r_i}} \ar@{->>}[r]\ar@{->>}[d] & 
                G_{M\prod_{i=1}^{q'-1}{\ell_i}^{r_i}}\ar@{->>}[d]\\
                G_{M{\ell_{q'}}^{r_{q'}}}
                \ar@{->>}[r] & 
                G_{M}}                
            \]
            Again the kernels of the horizontal maps and the kernels of the vertical maps have coprime orders and so, by the induction hypothesis, the kernels of all maps are as large as possible.
        \end{proof}

  \section{Images of isolated points}
  \label{sec:MapsOfCurves}

    Let $C$ be a curve over a number field $F$ and consider the morphism
    \[
        \phi_d\colon \Sym^d C\to \Pic^d_C    
    \]
    that sends an unordered tuple of points to the sum of their divisor classes.  Let $W^d$ be the image of $\Sym^d C$ in $\Pic^d_C$. 
    Note that if there is a degree $d$ point on $C$ then $\Pic^d_C \isom \Pic^0_C$ and in particular is an abelian variety.

    \begin{definition}\label{def:Isolated}\hfill
        \begin{enumerate}
            \item A degree $d$ point $x\in C$ is $\defPisolated$ if there is no other point $x'\in (\Sym^d C)(F)$ such that $\phi_d(x) = \phi_d(x')$.  
            \item A degree $d$ point $x\in C$ is $\defAVisolated$ if there is no positive rank subabelian variety $A\subset \Pic^0_C$ such that $ \phi_d(x) + A \subset W^d$.
            \item A degree $d$ point $x\in C$ is \defi{isolated} if it is $\Pisolated$ and $\AVisolated$.
            \item A degree $d$ point $x\in C$ is \defi{sporadic} if there are only finitely many closed points $y\in C$ with {$\deg(y) \leq \deg(x)$}.
        \end{enumerate}
    \end{definition}
    Faltings's theorem \cite{Faltings} on rational points on subvarieties of abelian varieties implies the following two results on isolated and sporadic points.
    \begin{thm}\label{thm:FiniteIsolated}
        Let $C$ be a curve over a number field.
        \begin{enumerate}
            \item There are infinitely many degree $d$ points on $C$ if and only if there is a degree $d$ point on $C$ that is \emph{not} isolated.  In particular, sporadic points are isolated.\label{case:CharacterizingFiniteDegreed}
            \item There are only finitely many isolated points on $C$.\label{case:FinitelyManyIsolated}
        \end{enumerate}
    \end{thm}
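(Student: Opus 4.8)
The plan is to deduce the theorem from Faltings's theorem on rational points of closed subvarieties of abelian varieties, together with elementary facts about symmetric products; a naive Hilbert‑irreducibility argument can be avoided (and would anyway fail, since abelian varieties are not Hilbertian). Throughout write $\Sigma\subseteq\Sym^d C$ for the closed \emph{decomposable locus} $\bigcup_{e=1}^{\lfloor d/2\rfloor}\im\!\big(\Sym^e C\times\Sym^{d-e}C\to\Sym^d C\big)$, and record the elementary fact that an $F$-point $D$ of $\Sym^d C$ is the divisor of a single closed point of degree $d$ precisely when $D\notin\Sigma$. Note also that $x$ is $\Pisolated$ exactly when $h^0(C,\OO([x]))=1$, i.e.\ when the fiber $\phi_d^{-1}(\phi_d([x]))$ is the single reduced point $[x]$.

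For the forward direction of (1), suppose $C$ has infinitely many degree $d$ points, call this set $\Sigma_d$. If some fiber of $\phi_d$ meets $\Sigma_d$ in two points, then two distinct degree $d$ points are linearly equivalent, so neither is $\Pisolated$. Otherwise $\phi_d(\Sigma_d)\subseteq W^d(F)$ is infinite; since there is a degree $d$ point we identify $\Pic^d_C$ with the abelian variety $\Pic^0_C$, and Faltings puts $W^d(F)$ inside a finite union of translates $w_i+A_i\subseteq W^d$. Infinitely many elements of $\phi_d(\Sigma_d)$ lie in one $w_{i_0}+A_{i_0}$, which thus has infinitely many $F$-points and in particular an $F$-point, so $A_{i_0}$ has positive dimension and positive rank; choosing $x\in\Sigma_d$ with $\phi_d([x])\in w_{i_0}+A_{i_0}$ gives $\phi_d([x])+A_{i_0}=w_{i_0}+A_{i_0}\subseteq W^d$, so $x$ is not $\AVisolated$. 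For the reverse direction, let $x$ have degree $d$ and be non-isolated. If $x$ is not $\Pisolated$, then $|[x]|\isom\PP^r_F$ with $r\geq1$; it contains $[x]\notin\Sigma$, so $|[x]|\cap\Sigma$ is a proper closed subset, whence $\PP^r(F)\setminus\Sigma$ is Zariski dense, hence infinite, and consists of pairwise distinct degree $d$ closed points. If $x$ is $\Pisolated$ but not $\AVisolated$, pick a positive-rank abelian subvariety $A\subseteq\Pic^0_C$ with $\phi_d([x])+A\subseteq W^d$ and replace $A$ by the identity component of the Zariski closure of $A(F)$; this remains a positive-dimensional positive-rank abelian subvariety with $\phi_d([x])+A\subseteq W^d$, and now $A(F)$ is Zariski dense in $A$. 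Put $T:=\phi_d([x])+A$, which is isomorphic to $A$ over $F$ since $\phi_d([x])\in T(F)$, and $Y:=\phi_d^{-1}(T)$. Since $h^0$ is upper semicontinuous, equals $1$ at $\OO([x])\in T$, and is $\geq1$ everywhere on $T$, the generic fiber of $\phi_d|_Y\colon Y\to T$ is a single reduced point; hence $Y$ has a unique irreducible component $Y_0$ dominating $T$, the proper surjection $Y_0\to T$ is birational, and $[x]\in Y_0$ because the fiber of $Y_0\to T$ over $\phi_d([x])$ is nonempty and contained in $\phi_d^{-1}(\phi_d([x]))=\{[x]\}$. Transporting density of $T(F)=A(F)$ across the birational equivalence, $Y_0(F)$ is Zariski dense in $Y_0$; since $[x]\in Y_0\setminus\Sigma$, the set $Y_0(F)\setminus\Sigma$ is Zariski dense in the positive-dimensional variety $Y_0$, hence infinite, and again consists of pairwise distinct degree $d$ closed points. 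The ``in particular'' is then immediate: a sporadic point of degree $d$ forces finitely many degree $d$ points, so every degree $d$ point is isolated.

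For (2), Riemann--Roch shows every effective divisor of degree $d>g(C)$ has $h^0\geq2$, so there are no $\Pisolated$---hence no isolated---points of degree exceeding $g(C)$; it suffices to bound isolated points of each degree $d\leq g(C)$. Fix such a $d$; if $C$ has no degree $d$ points we are done, so assume $\Pic^d_C\isom\Pic^0_C$ and apply Faltings to get $W^d(F)\subseteq\bigcup_i(w_i+A_i)$ with $w_i+A_i\subseteq W^d$. If $x$ is isolated of degree $d$ with $\phi_d([x])\in w_i+A_i$, then $A_i$ cannot be simultaneously positive-dimensional and of positive rank, for then $\phi_d([x])+A_i=w_i+A_i\subseteq W^d$ would violate $\AVisolated$ness; so the relevant translate $w_i+A_i$, which has the $F$-point $\phi_d([x])$, has only finitely many $F$-points. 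Thus $\phi_d$ carries the isolated degree $d$ points into a finite set, and $\phi_d$ is injective on them by $\Pisolated$ness, so there are finitely many. Summing over $d=1,\dots,g(C)$ finishes (2).

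The main obstacle is the reverse direction of (1) in the $\AVisolated$ case: turning the abstract containment $\phi_d([x])+A\subseteq W^d$ into an honest positive-dimensional subvariety of $\Sym^d C$ through $[x]$ with a Zariski-dense set of $F$-points. Two points make this work: (i) first reducing to the case where $x$ is $\Pisolated$, which forces $\phi_d$ to be generically one-to-one over $T$ and so produces a single well-defined dominating component $Y_0$ containing $[x]$ with $Y_0\to T$ birational; and (ii) shrinking $A$ to the Zariski closure of its rational points, so density of $A(F)$ transports to density of $Y_0(F)$. It is worth noting that although a class in $\Pic^d_C(F)$ need not be represented by a genuine line bundle, this Brauer-type obstruction never intervenes, since the family $Y_0$ ultimately used already lies inside $\Sym^d C$, whose $F$-points are exactly the effective $F$-rational degree $d$ divisors.
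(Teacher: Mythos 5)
The forward direction of (1) and part (2) match the paper's arguments closely and are fine, and your tightening of the Riemann--Roch degree bound from $d>g+1$ to $d>g$ is correct. The difficulty is the backward direction of (1), where the key step has a genuine gap.

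Your ``decomposable locus'' $\Sigma=\bigcup_{e=1}^{\lfloor d/2\rfloor}\im\bigl(\Sym^e C\times\Sym^{d-e}C\to\Sym^d C\bigr)$ is \emph{all} of $\Sym^d C$ whenever $d\geq 2$. Indeed, over $\overline{F}$ every effective degree $d$ divisor is a sum of $\overline{F}$-points, so already the $e=1$ addition map $C\times\Sym^{d-1}C\to\Sym^d C$ is surjective; its scheme-theoretic image is therefore the whole of $\Sym^d C$. In particular, for a degree $d>1$ closed point $x$, the $F$-point $[x]$ does lie in $\Sigma$ (base-changing to $\overline{F}$ it is visibly $P_1+\dots+P_d$), so ``$[x]\notin\Sigma$'' is false and ``$|[x]|\cap\Sigma$ is a proper closed subset'' fails. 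The set you actually need to avoid is the set of $F$-points of $\Sym^d C$ arising as $D_1+D_2$ with $D_1,D_2$ effective \emph{$F$-rational} divisors of positive degree, i.e.\ the image of $(\Sym^e C)(F)\times(\Sym^{d-e}C)(F)$. That set is \emph{not} Zariski closed, so knowing $Y_0(F)$ is dense and $[x]\in Y_0(F)$ is a closed point of degree $d$ does not let you conclude that $Y_0(F)$ contains infinitely many further irreducible divisors. The same issue invalidates the $\Pisolated$ sub-case (``$\PP^r(F)\setminus\Sigma$ is dense'').

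This is exactly the point where the paper does real work. In the $\Pisolated$ sub-case the paper uses that $[x]$ and a linearly equivalent $[x']$ have disjoint support, obtains a degree-$d$ morphism $g\colon C\to\PP^1$, and applies Hilbert irreducibility over $\PP^1$ (not over $T$, so your parenthetical objection that abelian varieties are not Hilbertian does not apply to what the paper actually does). In the $\AVisolated$ sub-case the paper works in $\Pic^d_C$ rather than $\Sym^d C$: it applies Faltings a \emph{second} time to $\bigcup_i\bigl(W^{d-i}(F)+W^i(F)\bigr)$, writes this as a finite union of cosets $y_j+A_j(F)$, uses the $\Pisolated$ hypothesis to see $\phi_d([x])$ avoids this union, and then argues group-theoretically that $\phi_d([x])+H$ escapes the union infinitely often. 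Your construction of $Y_0\to T$ (shrinking $A$ to the closure of its rational points, using semicontinuity of $h^0$ to obtain a birational dominating component, and transporting density) is a nice reformulation inside $\Sym^d C$ and is correct as far as it goes, but by itself it only produces infinitely many $F$-rational effective divisors; it does not show that infinitely many of them are irreducible, which is the crux. Some version of the paper's second Faltings argument (or a correctly set-up thin-set argument) is still needed to finish.
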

    \noindent We provide the details of the proof in Section~\ref{sec:ProofTheoremFiniteIsolated}.

    In this section, we consider an arbitrary morphism of curves, and give a criterion for when images of isolated points remain isolated.  Our main result is the following.
  \begin{theorem}\label{thm:PushingForwardSporadicIsolated}
    Let $f\colon C \to D$ be a finite map of curves, let $x\in C$ be a closed point, and let $y = f(x) \in D$.  Assume that $\deg(x) = \deg(y)\cdot\deg(f)$.
    \begin{enumerate}
        \item If $x$ is $\Pisolated$, then $y$ is $\Pisolated$.\label{case:Pisolated}
        \item If $x$ is $\AVisolated$, then $y$ is $\AVisolated$.\label{case:Aisolated}
        \item If $x$ is sporadic, then $y$ is sporadic.\label{case:sporadic}
    \end{enumerate}
  \end{theorem}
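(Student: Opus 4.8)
Write $n=\deg(f)$, $e=\deg(y)$, and $d=\deg(x)=en$. The plan rests on the observation that the degree hypothesis forces the pullback divisor $f^{*}y$ on $C$ to be exactly $x$: since $x$ lies in the fiber $f^{-1}(y)$ and $[\kk(x):\kk(y)]=\deg(x)/\deg(y)=n$, the fundamental identity $\sum_{z\in f^{-1}(y)}e_z[\kk(z):\kk(y)]=n$ leaves no room for other points or for ramification, so $f^{-1}(y)=\{x\}$ with ramification index $1$ and hence $f^{*}y=x$. So the first step is to record this, together with the routine properties of pullback of divisors along the $F$-morphism $f$: it takes an effective $F$-rational divisor of degree $m$ to an effective $F$-rational divisor of degree $mn$, it sends a principal divisor $\divv(g)$ to $\divv(g\circ f)$ and so preserves linear equivalence, and consequently it induces maps $(\Sym^{e}D)(F)\to(\Sym^{d}C)(F)$ and $\Pic^{e}_{D}\to\Pic^{d}_{C}$ compatible with the Abel--Jacobi maps $\phi_{e}$ and $\phi_{d}$; moreover $f^{*}$ is injective on $\Div(D)$ because $f_{*}f^{*}=[n]$ there and $\Div(D)$ is torsion free.

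For part \eqref{case:Pisolated} I would argue contrapositively. If $y$ is not $\Pisolated$, choose $y'\in(\Sym^{e}D)(F)$ with $y'\neq y$ and $\phi_{e}(y')=\phi_{e}(y)$, i.e.\ $y'\sim y$ on $D$. Pulling back, $f^{*}y'\sim f^{*}y=x$ on $C$ with $f^{*}y'\in(\Sym^{d}C)(F)$, so $\phi_{d}(f^{*}y')=\phi_{d}(x)$. Since $x$ is $\Pisolated$, this forces $f^{*}y'=x=f^{*}y$, and injectivity of $f^{*}$ on divisors gives $y'=y$, a contradiction.

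For part \eqref{case:Aisolated} I would again work contrapositively. Suppose there is an abelian subvariety $B\subseteq\Pic^{0}_{D}$ with $\rank B(F)>0$ and $\phi_{e}(y)+B\subseteq W^{e}_{D}$, where $W^{e}_{D}$ denotes the image of $\Sym^{e}D$ in $\Pic^{e}_{D}$. Set $A:=f^{*}(B)\subseteq\Pic^{0}_{C}$, the image of $B$ under the pullback homomorphism. Since $f_{*}f^{*}=[n]$ on $\Pic^{0}_{D}$, the kernel of $f^{*}\colon\Pic^{0}_{D}\to\Pic^{0}_{C}$ is finite, so $f^{*}|_{B}\colon B\to A$ is an $F$-isogeny; hence $A$ is an abelian subvariety of $\Pic^{0}_{C}$ with $\rank A(F)=\rank B(F)>0$. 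Applying the morphism $f^{*}$ to the inclusion $\phi_{e}(y)+B\subseteq W^{e}_{D}$, and using $\phi_{d}(x)=\phi_{d}(f^{*}y)=f^{*}(\phi_{e}(y))$ together with $f^{*}(W^{e}_{D})\subseteq W^{d}_{C}$ (the pullback of an effective divisor of degree $e$ is effective of degree $d$, so this can be checked on $\overline{F}$-points), we obtain $\phi_{d}(x)+A=f^{*}\big(\phi_{e}(y)+B\big)\subseteq W^{d}_{C}$, contradicting that $x$ is $\AVisolated$.

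Part \eqref{case:sporadic} is direct: a finite morphism of curves is surjective, so every closed point $w\in D$ has a preimage $z\in C$ with $\deg(z)=[\kk(z):\kk(w)]\deg(w)\le n\deg(w)$; thus $\deg(w)\le\deg(y)$ implies $\deg(z)\le n\deg(y)=\deg(x)$, so $f$ maps $\{z\in C:\deg(z)\le\deg(x)\}$ onto $\{w\in D:\deg(w)\le\deg(y)\}$, and finiteness of the former (sporadicity of $x$) forces finiteness of the latter. No step here is deep; the point requiring the most care is the bookkeeping around the pullback operation---the identity $f^{*}y=x$, the compatibility of $f^{*}$ with $\phi_{d}$, with $W^{d}$, with $F$-rationality, and with linear equivalence, and the fact that $f^{*}|_{B}$ is an isogeny so that positivity of the rank is preserved. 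The degree hypothesis $\deg(x)=\deg(y)\deg(f)$ is used precisely to make $f^{*}y=x$ hold (in parts \eqref{case:Pisolated} and \eqref{case:Aisolated}) and to give $\deg(x)\ge n\deg(y)$ (in part \eqref{case:sporadic}).
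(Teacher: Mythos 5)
Your proof is correct and follows essentially the same route as the paper's: for each part you contrapose and transport a witness of non-isolation from $D$ to $C$ via pullback. The useful thing you make explicit, which the paper uses only implicitly (in part (2) it appears as the remark that the left vertical arrow of the commutative diagram sends $y$ to $x$), is that the degree hypothesis forces $f^{*}y=x$ as divisors; your part (1) then works by pulling back $y'$ directly and using injectivity of $f^{*}$ on $\Div(D)$, while the paper packages the same mechanism as composing the linear system of $y'$ with $f$ to get a degree $\deg(x)$ map $C\to\PP^1$ and taking a generic fiber -- these are equivalent. You also spell out the isogeny argument showing $f^{*}(B)$ has the same Mordell--Weil rank as $B$, a point the paper asserts without comment. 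No gaps.
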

  \begin{proof}
    Let $d = \deg(y)$ and let $e = \deg(f)$.  Then by assumption $de = \deg(x)$.

    \eqref{case:Pisolated} {Assume that $y$ is not $\Pisolated$, so there exists a point $y'\in (\Sym^dC)(F)$, different from $y$, such that $\phi_d(y) = \phi_d(y')$, or, in other words such that there exists a function $g\in \kk(D)^{\times}$ such that $\divv(g) = y - y'$. Since $y$ is a degree $d$ point (and not just an effective degree $d$ {divisor}), the assumption that $y\neq y'$ implies that $y$ and $y'$ have distinct support.}  Therefore the map $g\colon D \to \PP^1$ has degree $d$, and hence $g\circ f$ gives a degree $de$ map.  Then for any $z\in \PP^1(F)$ different from $g(f(x))$,  the fiber $(g\circ f)^{-1}(z)$ gives a point of $(\Sym^{{de}}C)(F)$, distinct from $x$, such that $\phi_{de}(x)=\phi_{de}((g\circ f)^{-1}(z)).$ In particular, $x$ is not $\Pisolated$.

    \eqref{case:Aisolated} Assume that $y$ is not $\AVisolated$, so there exists a subabelian variety  $A\subset \Pic^0_D$ such that $\phi_d(y)+ A\subset W^d$.  The morphism $f$ induces a commutative diagram
    \[
        \xymatrix{
            \Sym^d D \ar[r]^{\phi_d} \ar[d] & \Pic^d_D \ar[d]^{f^*}\\
            \Sym^{de} C \ar[r]^{\phi_{de}}  & \Pic^{de}_C,
        }
    \]
    where the left vertical arrow sends $y$ to $x$.  Therefore, $\phi_{de}(x) + f^*A\subset W^{de}$.  Since $f^*A$ is a positive rank subabelian variety of $\Pic^0 C$, the point $x$ is not $\AVisolated$.

    \eqref{case:sporadic} Assume that $y$ is not sporadic, i.e., that there are infinitely many closed points $y'\in D$ with $\deg(y')\leq \deg(y) = d$.  For each of these points $y'$, there is a closed point $x'\in f^{-1}(y')$ such that 
    \[
        \deg(x') \leq \deg(y')e \leq \deg(y)e = de = \deg(x).
    \]
    Hence, the point $x$ is not sporadic.
  \end{proof}

  \subsection{Proof of Theorem~\ref{thm:FiniteIsolated}}\label{sec:ProofTheoremFiniteIsolated}
    {\eqref{case:CharacterizingFiniteDegreed}} The forward direction is a straightforward consequence of Faltings's theorem  \cite{Faltings}; we include the details for the readers' convenience.  Assume that there are infinitely many degree $d$ points.  Then either there are two degree $d$ points $x, x'\in C$ such that $\phi_d(x) = \phi_d(x')$ and so in particular $x$ and $x'$ are not $\Pisolated$, or $\phi_d$ is injective on the set of degree $d$ points.  In the latter case,  $W^d\subset \Pic_C^d$ contains infinitely many rational points.  Faltings's theorem states that the rational points on $W^d$ are a finite union of translates of subabelian varieties, so in particular, there must be a positive rank abelian variety $A\subset \Pic^0_C$ and a degree $d$ point $x\in C$ such that $x + A\subset W^d$, i.e., the degree $d$ point $x$ is not $\AVisolated$. 

    Now we prove the backwards direction, which requires a more detailed study of Faltings's theorem.
    Let $x\in C$ be a degree $d$ point that is not isolated.  If $x$ is not $\Pisolated$, then there exists an $x'\in (\Sym^d C)(F), x\ne x'$, such that $\phi_d(x) = \phi_d(x')$, or equivalently, there exists a rational function $g\in \kk(C)^{\times}$ such that $\divv(g) = x - x'$.  Since $x$ is a closed point and $x'\in (\Sym^d C)(F)$, $x\ne x'$ implies that $x$ and $x'$ have disjoint support.  Thus the function $g$ gives a degree $d$ morphism $g\colon C \to \PP^1$.  By Hilbert's irreducibility theorem~\cite[Chap. 9]{Serre-MWLectures}, there are infinitely many degree $1$ points $z\in \PP^1$ such that $g^{-1}(z)$ has degree $d$, which gives the desired result.
    
    Now assume that $x$ is $\Pisolated$ but not $\AVisolated$, i.e., that $x$ is not equivalent to any other effective divisors and that there is a  positive rank subabelian variety $A\subset \Pic^0 C$ such that $x + T \subset W^d$.  Since the cokernel of $\Pic C \to \left(\Pic \overline{C}\right)^{\Gal(\overline{F}/F)}$ is torsion, there is a finite index subgroup $H\subset A(F)$ such that every divisor class in $H$ (and therefore every divisor class in $x + H$) is represented by an $F$-rational divisor, and every divisor class in $A(F)\setminus H$ is \emph{not} represented by an $F$-rational divisor.  In other words, $\phi_d\left((\Sym^d C)(F)\right)\cap (x + A(F)) = x + H$.
    
    Since $H$ has positive rank, taking the preimage of $x + H$ under $\phi_d$ yields infinitely many rational points on $\Sym^d C$, or, equivalently, infinitely many effective degree $d$ $0$-cycles on $C$.  It remains to prove that infinitely many of these $0$-cycles are irreducible, i.e., are not in the image of $\cup_i\left((\Sym^{d-i}C)(F)\times (\Sym^i C)(F)\right)$.
    
    Consider the following commutative diagram
    \[
        \xymatrix{
            (\Sym^{d-i} C)(F) \times (\Sym^i C)(F) \ar[rr]^(.63){\phi_{d-i}\times \phi_i} \ar[d] && W^{d-i}  \times W^i \ar[d]\\
            (\Sym^{d} C)(F) \ar[rr]^{\phi_{d}}  && W^{d},
        }
    \]
    where the vertical maps are induced by concatenation and summation, respectively.  If there are only finitely many degree $d$ points on $C$, then all but finitely many of the points in $x + H$ are contained in the union  $\cup_i (W^{d-i}(F) + W^i(F))$.  Faltings's theorem on rational points on subvarieties of abelian varieties implies that 
    \begin{equation}\label{eq:ImageOfLowerWd}
        \bigcup_{i=1}^{\lfloor d/2\rfloor} (W^{d-i}(F) + W^i(F)) = \bigcup_{j=1}^n y_j + A_j(F),
    \end{equation}
    where $n$ is some nonnegative integer, the $A_j$'s are some subabelian varieties of $\Pic^0_C$ and the $y_j$'s are degree $d$ divisors on $C$, which can be taken to be reducible and effective.
    
    We are concerned with the intersection
    \begin{align*}
        (x + H) \cap \left(\bigcup_{j=1}^n y_j + A_j(F)\right) & = 
        x + \left(H \cap \left(\bigcup_{j=1}^n y_j - x + A_j(F)\right)\right)\\
        &= x + \bigcup_{j=1}^n \left(H\cap \left(y_j-x + A_j(F)\right)\right)
    \end{align*}
    If the intersection $H\cap (y_j-x + A_j(F))$ is nonempty, then it is a coset of $H\cap A_j(F)$.  In addition, since $x$ is a $\Pisolated$ degree $d$ point on $C$, $x$ cannot be written as the sum of two nonzero effective divisors, so by definition of~\eqref{eq:ImageOfLowerWd}, $H\cap (y_j-x + A_j(F))$ does not include the identity.  Thus,
    \[
        (x + H) \cap \left(\bigcup_{j=1}^n y_j + A_j(F)\right)  = x + \bigcup_{j\in J} \left(z_j + H\cap A_j(F)\right),
    \] 
    where $J\subset \{1, \dots, n\}$ and $z_j\in H\setminus H\cap A_j(F)$.

    For each $j\in J$, let $G_j$ be a subgroup of $H$ of finite index that contains $H\cap A_j(F)$ and that does not contain $z_j$.  Then we have
    \begin{align*}
        (x + H) \cap \left(\bigcup_{j=1}^n y_j + A_j(F)\right) & = x + \bigcup_{j\in J} \left(z_j + H\cap A_j(F)\right)\\
        & \subset x + \bigcup_{j\in J} \left(z_j +G_j\right)\\
        & \subset (x+ H) \setminus \left(x + \cap_{j\in J}G_j\right)\subsetneq x + H.
    \end{align*}
    Since each $G_j$ is finite index in $H$, so is the intersection $\cap_{j\in J} G_j$.  Hence, the image of $\cup_i (W^{d-i}(F) + W^i(F))$ misses infinitely many rational points of $x+H$, and so there are infinitely many degree $d$ points on $C$.

    {\eqref{case:FinitelyManyIsolated}}       If $C$ has genus $0$, then no point is $\PP^1$-isolated, so the statement trivially holds.  Now assume that $g:= \textup{genus}(C)$ is positive,  and let $P$ be a point of degree $d$.  If $d> g + 1$, then by Riemann-Roch, $\ell(P) = d - g + 1 > 2$ and so $P$ is not $\PP^1$-isolated.   Therefore, any isolated point on $C$ must have bounded degree, and so it suffices to prove that there are only finitely many isolated points of a fixed degree $d$.

    Recall that degree $d$ points on $C$ give rise to rational points on $\Sym^d C$ that in turn map, via $\phi_d$, injectively to rational points on $W^d$. By Faltings's theorem, $W^d(F)$ is the finite union of translates of subabelian varieties of $\Pic^0 C$.  By definition, any degree $d$ point on $C$ that lands in a translate of a positive rank subabelian variety is not $\AVisolated$.  Therefore, the set of degree $d$ isolated points of $C$ must inject (under $\phi_d$) into a finite union of translates of rank $0$ subabelian varieties, so in particular must be finite.
\qed

  \section{Isolated points above a fixed non-CM $j$-invariant}
  \label{sec:FixedjInvariant}
  
    For any non-CM elliptic curve $E$ over a number field $k$, recall from \S\ref{sec:Serre} that {
    \[
        S_E = S_{E/k} := \left\{2, 3\right\} \cup \left\{\ell : \rho_{E, \ell^{\infty}}(\Gal_k) \not\supset  \SL_2(\Z_{\ell})\right\} \cup \left\{5, \textup{if }\rho_{E, 5^{\infty}}(\Gal_k) \neq  \GL_2(\Z_{5})\right\}.
    \]}

    {In this section we show that the degree of a non-cuspidal non-CM point $x \in X_1(n)$ is as large as possible given the degree of its image in $X_1(a)$ for $a = \gcd(n, M_{E_{x}}(S_{E_{x}}))$, where $E_{x}$ is an elliptic curve over $\Q(j(x))$ with $j$-invariant $j(x)$.  }
	\begin{theorem}\label{thm:DegreeOfMapFixedjInvariant}
        Fix a non-CM elliptic curve $E$ over a {number field $k$}.   Let $S$ be a finite set of places containing $S_E$ and let {$\mm_S := \prod_{\ell\in S}\ell$. Let} $M$ be a positive integer with $\Supp(M)\subset S$ satisfying
        \begin{equation}\label{eq:FullPreimage}
            \im \rho_{E, \mm_S^\infty} = \pi^{-1}(\im\rho_{E, M}).
        \end{equation}
          If $x\in X_1(n)$ is a closed point with $j(x) = j(E)$, then {
            $\deg(x) = \deg(f)\deg(f(x)),$
          where $f$ denotes the natural map $X_1(n) \to X_1(\gcd(n,M)).$}
    \end{theorem}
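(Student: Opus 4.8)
The plan is to compare the three degrees directly via the moduli/Galois‑representation description of the points. Set $a=\gcd(n,M)$, $b=n/a$, and $y=f(x)$, and work with a model of $E$ over $\Q(j(E))$ (permitted by Lemma~\ref{lem:degree}, since $E$ is non‑CM and hence $\Aut(E)=\{\pm1\}$; the comparison with a general base field $k$ is an elementary twisting argument that does not touch the core). Fix a basis of $E[n]$ so that the point $P$ underlying $x$ corresponds to $e_1=(1,0)\in(\Z/n\Z)^2$; then $y$ is represented by $(E,bP)$, and $bP$ corresponds to $e_1\in(\Z/a\Z)^2$. Writing $G_m:=\im\rho_{E,m}$ for its action on $(\Z/m\Z)^2$, orbit–stabilizer gives $[\Q(j(E))(P):\Q(j(E))]=|G_n\cdot e_1|$ and $[\Q(j(E))(bP):\Q(j(E))]=|G_a\cdot e_1|$, so Lemma~\ref{lem:degree} yields $\deg(x)=c_x\,|G_n\cdot e_1|\,[\Q(j(E)):\Q]$ and $\deg(y)=c_y\,|G_a\cdot e_1|\,[\Q(j(E)):\Q]$ with $c_x,c_y\in\{1,\tfrac12\}$, while Proposition~\ref{prop:Degree} gives $\deg(f)=c_f\,b^2\prod_{p\mid b,\,p\nmid a}(1-p^{-2})$ with $c_f\in\{1,\tfrac12\}$. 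Thus it suffices to establish
\[
\frac{|G_n\cdot e_1|}{|G_a\cdot e_1|}=b^2\prod_{p\mid b,\,p\nmid a}\Bigl(1-\frac1{p^2}\Bigr)
\qquad\text{and}\qquad
c_x=c_y\,c_f.
\]

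For the first identity: reduction mod $a$ realizes $G_n\cdot e_1\to G_a\cdot e_1$ as a $G_n$‑equivariant surjection of transitive $G_n$‑sets, so all fibres have common size $|G_n\cdot e_1|/|G_a\cdot e_1|$; since $\GL_2$ preserves order, the fibre over $e_1$ is $(G_n\cdot e_1)\cap\Phi$ where $\Phi:=\{v\in(\Z/n\Z)^2:\ v\equiv e_1\ (\mathrm{mod}\ a),\ \ord(v)=n\}$. A prime‑by‑prime count gives $|\Phi|=b^2\prod_{p\mid b,\,p\nmid a}(1-p^{-2})$, so the first identity is equivalent to $\Phi\subseteq G_n\cdot e_1$. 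To prove this, factor $n=n_S n'$ with $n_S$ supported on $S$ and $\gcd(n',\mm_S)=1$; since $\Supp(M)\subset S$ we have $a\mid n_S$ and $\gcd(a,n')=1$, so for $v=(v_S,v')$ the congruence $v\equiv e_1\ (\mathrm{mod}\ a)$ constrains only $v_S$. Reducing \eqref{eq:FullPreimage} modulo $n_S$ shows $G_{n_S}=\pi^{-1}(G_a)$ for $\pi\colon\GL_2(\Z/n_S\Z)\to\GL_2(\Z/a\Z)$, so $G_{n_S}$ contains the full kernel $K_S:=\ker\pi$; a prime‑by‑prime check (trivial at primes dividing $n_S$ but not $M$, where $K_S$ is all of $\GL_2$ at that prime and acts transitively on primitive vectors, and otherwise using that $I+\ell^{j}\Mat_2$ acts transitively on the relevant residue coset) shows $K_S$ is transitive on $\{w:\ w\equiv e_1\ (\mathrm{mod}\ a),\ \ord(w)=n_S\}$, so some element of $G_{n_S}$ sends $e_1$ to $v_S$. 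On the other hand each prime dividing $n'$ lies outside $S$ (which contains $S_E$), hence is $\geq 5$ with $\ell$‑adic image containing $\SL_2(\Z_\ell)$, and equal to $\GL_2(\Z_5)$ if $\ell=5$; iterating Proposition~\ref{prop:surjectiveSL} then gives $\{I\}\times\SL_2(\Z/n'\Z)\subseteq\ker(G_n\to G_{n_S})\subseteq G_n$. Now choose $g\in G_n$ whose $n_S$‑component sends $e_1$ to $v_S$ (possible by the previous step and surjectivity of $G_n\to G_{n_S}$), and post‑multiply $g$ by a suitable element of $\{I\}\times\SL_2(\Z/n'\Z)$, using transitivity of $\SL_2(\Z/n'\Z)$ on primitive vectors of $(\Z/n'\Z)^2$, to correct the $n'$‑component; the result is an element of $G_n$ carrying $e_1$ to $v$, so $v\in G_n\cdot e_1$.

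For $c_x=c_yc_f$: recall $c_f=\tfrac12$ exactly when $a\le2<n$; $c_x=\tfrac12$ exactly when $n>2$ and $-e_1\in G_n\cdot e_1$ (equivalently, some $\sigma\in\Gal_{\Q(j(E))}$ sends $P$ to $-P$); and $c_y=\tfrac12$ exactly when $a>2$ and $-e_1\in G_a\cdot e_1$. If $a\le2<n$, then $-e_1\equiv e_1\ (\mathrm{mod}\ a)$, so $-e_1\in\Phi\subseteq G_n\cdot e_1$ by the previous paragraph, while $a\le2$ forces $c_y=1$; hence $c_x=\tfrac12=c_yc_f$. If $a\ge3$, then $c_f=1$ and $n\ge a>2$, so it suffices to show $-e_1\in G_n\cdot e_1\iff -e_1\in G_a\cdot e_1$: the forward direction is reduction mod $a$, and the reverse is the same two‑step lifting argument as above, now transporting $-e_1$ first through $K_S$ (transitive on $\{w:\ w\equiv -e_1\ (\mathrm{mod}\ a),\ \ord(w)=n_S\}$) and then through $\SL_2(\Z/n'\Z)$. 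Finally, if $n\le2$ then $a\le2$ and all three constants are $1$. Combining the two displayed identities gives $\deg(x)=\deg(f)\deg(f(x))$.

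The step I expect to be the main obstacle is the containment $\Phi\subseteq G_n\cdot e_1$, i.e.\ that the $G_n$‑orbit of $e_1$ is as large as it possibly can be over level $a$. This is precisely where the hypothesis \eqref{eq:FullPreimage} (controlling the primes of $S$ \emph{above} level $M$) must be fused with Proposition~\ref{prop:surjectiveSL} (controlling the primes outside $S$), and where some care is needed because $S$ may contain primes not dividing $M$, so the "new'' level at such a prime is trivial and the corresponding kernel is the full $\GL_2$. The reconciliation $c_x=c_yc_f$ and the reduction to the base field $\Q(j(E))$ are comparatively routine, but they rely on the same orbit‑transitivity inputs.
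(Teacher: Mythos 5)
Your proof is correct and rests on exactly the same ingredients the paper uses — the full–preimage hypothesis \eqref{eq:FullPreimage} for the $S$-part, Proposition~\ref{prop:surjectiveSL} for the primes outside $S$, and the dictionary between residue degrees and Galois orbits from Lemma~\ref{lem:degree} — but it is organized as a single direct orbit computation rather than the paper's modular decomposition. The paper first reduces (Lemma~\ref{lem:LargeGaloisImageDegreeOfMap}) to the field-theoretic criterion that $[k(P):k(bP)]$ be maximal, then verifies that criterion separately for the two legs of the factored map $X_1(n)\to X_1(n_0)\to X_1(\gcd(n,M))$ via Proposition~\ref{prop:EliminatingSurjectivePrimes} (primes outside $S$, one at a time, by a Goursat-style cardinality argument) and Proposition~\ref{prop:FixedSupportAndLevel} (primes in $S$). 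You instead phrase everything as the single containment $\Phi\subseteq G_n\cdot e_1$ and prove it in one two-stage lift — first through the kernel $K_S$ at level $n_S$, then by post-multiplying by an element of $\{I\}\times\SL_2(\Z/n'\Z)$ — and then you reconcile the half-integer constants $c_x=c_yc_f$ explicitly. (The paper's Lemma~\ref{lem:LargeGaloisImageDegreeOfMap} slips past this last point with a one-sided inequality $c_x\geq c_{f(x)}$ in the $a>2$ case plus the a priori bound $\deg(x)\le\deg(f)\deg(f(x))$; your argument is a little more work but more symmetric.) The trade-off is that the paper's intermediate propositions are reusable (Proposition~\ref{prop:EliminatingSurjectivePrimes} is invoked directly in the proof of Theorem~\ref{thm:classification} and Corollary~\ref{cor:SurjectivePrimes}), whereas your fused argument, while perhaps more transparent for this theorem, does not isolate those statements. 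One small caveat you share with the paper: both treatments implicitly take $k=\Q(j(E))$ when passing from the Galois-image hypothesis to residue-degree computations; your remark that the general-$k$ case is an elementary twisting reduction is in the same spirit as the paper's Remark~\ref{Rmk:Leveljinvariant} and is fine to leave at that level of detail.
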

    \begin{remark}\label{Rmk:Leveljinvariant}
            Note that if $E$ and $E'$ are quadratic twists of each other, both defined over a {number field $k$}, then $S_E = S_{E'}$ (see, e.g., \cite[Lemma 5.27]{Sutherland-ComputingGaloisImages}).  Furthermore, for any $S$, 
            \begin{equation}\label{eq:MaximalDegreeGrowth}
                \pm (\im\rho_{E, \mm_{S}^{\infty}}) = \pm (\im\rho_{E', \mm_{S}^{\infty}})
            \end{equation}
            (see, e.g., \cite[Lemma 5.17]{Sutherland-ComputingGaloisImages}).  Since any open subgroup of $\GL_2(\Z_{\mm_S})$ has only finitely many subgroups of index $2$, there is an integer $M$ that will satisfy~\eqref{eq:FullPreimage} for all quadratic twists of a fixed elliptic curve. 
    \end{remark}
    {This theorem combined with Theorem~\ref{thm:PushingForwardSporadicIsolated} yields the following corollary, of which Theorem~\ref{thm:UnconditionalMain} is a special case.
    \begin{cor}\label{cor:SporadicPtsFixedjInvariant}
        Fix a non-CM elliptic curve $E$ over a number field $k$.   Let $S$ be a finite set of places containing $S_E$ and let $M$ be a positive integer with $\Supp(M)\subset S$ satisfying
        \[
            \im \rho_{E, \mm_S^\infty} = \pi^{-1}(\im\rho_{E, M}).
        \]
        Let $x\in X_1(n)$ be a point with $j(x) = j(E)$, and let $f$ denote the natural map $X_1(n) \to X_1(\gcd(n,M)).$
        \begin{enumerate}
            \item If $x$ is $\Pisolated$, then $f(x)$ is $\Pisolated$.
            \item If $x$ is $\AVisolated$, then $f(x)$ is $\AVisolated$.
            \item If $x$ is sporadic, then $f(x)$ is sporadic.
        \end{enumerate}
    \end{cor}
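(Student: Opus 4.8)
The plan is to obtain the corollary as an immediate consequence of Theorem~\ref{thm:DegreeOfMapFixedjInvariant} and Theorem~\ref{thm:PushingForwardSporadicIsolated}, so there is essentially nothing new to prove. First I would record that the natural map $f\colon X_1(n)\to X_1(\gcd(n,M))$ is a finite morphism of curves: it is the nonconstant $\Q$-rational map induced by the inclusion of congruence subgroups $\Gamma_1(n)\subset\Gamma_1(\gcd(n,M))$ (cf.\ Proposition~\ref{prop:Degree}), and a nonconstant morphism between smooth projective curves is automatically finite. In particular $\deg(f)$ is well defined and $y := f(x)$ is a closed point of $X_1(\gcd(n,M))$.

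Next, the hypotheses placed on $S$ and $M$ --- namely $S_E\subset S$ finite, $\Supp(M)\subset S$, and $\im\rho_{E,\mm_S^\infty}=\pi^{-1}(\im\rho_{E,M})$ (equation~\eqref{eq:FullPreimage}) --- are exactly those of Theorem~\ref{thm:DegreeOfMapFixedjInvariant}. Applying that theorem to the closed point $x\in X_1(n)$ with $j(x)=j(E)$ therefore yields the key numerical identity
\[
    \deg(x)=\deg(f)\cdot\deg(f(x)).
\]

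With this identity in hand, I would invoke Theorem~\ref{thm:PushingForwardSporadicIsolated} with $C=X_1(n)$, $D=X_1(\gcd(n,M))$, the finite map $f$, the closed point $x$, and $y=f(x)$: its hypothesis $\deg(x)=\deg(y)\cdot\deg(f)$ is precisely the displayed identity. Part~\eqref{case:Pisolated} then gives that $f(x)$ is $\Pisolated$ when $x$ is, part~\eqref{case:Aisolated} gives the $\AVisolated$ statement, and part~\eqref{case:sporadic} gives the sporadic statement, establishing (1)--(3) respectively. There is no real obstacle here: all of the substantive work is contained in Theorem~\ref{thm:DegreeOfMapFixedjInvariant} (the degree computation for maps of modular curves over a fixed $j$-invariant) and in Theorem~\ref{thm:PushingForwardSporadicIsolated} (the general pushforward criterion); the corollary is simply their composition, and the only thing to check is that the hypotheses transfer verbatim, which they do by construction.
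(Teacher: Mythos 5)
Your proposal is correct and coincides exactly with the paper's approach: the paper deduces Corollary~\ref{cor:SporadicPtsFixedjInvariant} precisely by combining the degree identity of Theorem~\ref{thm:DegreeOfMapFixedjInvariant} with the pushforward criterion of Theorem~\ref{thm:PushingForwardSporadicIsolated}, without any further argument. Your additional remark that $f$ is finite (being a nonconstant morphism of smooth projective curves) is a harmless explicit verification of a hypothesis the paper leaves implicit.
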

    From this, we deduce the following.}
    \begin{cor}\label{cor:SurjectivePrimes}
      Let $E$ be a non-CM elliptic curve defined over $k := \Q(j(E))$.  If $\ell\notin S_E$, then there are no sporadic {or isolated} points on $X_1(\ell^s)$ lying over $j(E)$ for any $s\in \NN$. 
    \end{cor}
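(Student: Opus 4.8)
The plan is to reduce the statement directly to Corollary~\ref{cor:SporadicPtsFixedjInvariant} by pushing forward all the way to $X_1(1)$. Since we are given $\ell\notin S_E$, I would apply that corollary with the finite set of primes $S:=S_E$. By Serre's Open Image Theorem, $\im\rho_{E,\mm_S^\infty}$ is an open, hence finite-index, subgroup of $\GL_2(\Z_{\mm_S})$, so it is cut out by congruence conditions at primes of $S$ only; thus there is a positive integer $M$ with $\Supp(M)\subseteq S$ and $\im\rho_{E,\mm_S^\infty}=\pi^{-1}(\im\rho_{E,M})$ (one may take $M=M_E(S)$). The key numerical observation is that $\ell\notin S\supseteq\Supp(M)$, so $\gcd(\ell^s,M)=1$ for every $s\in\NN$, and hence the natural map $f$ of Corollary~\ref{cor:SporadicPtsFixedjInvariant} is a morphism $X_1(\ell^s)\to X_1(\gcd(\ell^s,M))=X_1(1)\isom\PP^1$.

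With this set-up, I would take an arbitrary closed point $x\in X_1(\ell^s)$ with $j(x)=j(E)$ and push it forward along $f$. By Corollary~\ref{cor:SporadicPtsFixedjInvariant}: if $x$ were sporadic, then $f(x)$ would be a sporadic point on $\PP^1$; and if $x$ were $\Pisolated$ --- in particular, if $x$ were isolated --- then $f(x)$ would be a $\Pisolated$ point on $\PP^1$. Neither is possible: $\PP^1$ has infinitely many rational points, so it has no sporadic closed point, and $\PP^1$ has genus $0$, so (as recorded in the proof of Theorem~\ref{thm:FiniteIsolated}\eqref{case:FinitelyManyIsolated}) no closed point of $\PP^1$ is $\Pisolated$. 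Hence $x$ is neither sporadic nor isolated; as $s$ and $x$ were arbitrary, the corollary follows.

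I do not expect a genuine obstacle here: all the substantive work lies in Theorem~\ref{thm:DegreeOfMapFixedjInvariant} and Corollary~\ref{cor:SporadicPtsFixedjInvariant}, and the present statement is simply the special case in which the target is the rational curve $X_1(1)$, on which no closed point can be sporadic or $\Pisolated$. The only steps that merit a sentence of care are: (i) confirming that a level $M$ with $\Supp(M)\subseteq S_E$ exists satisfying the hypothesis of Corollary~\ref{cor:SporadicPtsFixedjInvariant}, which follows from the openness of $\im\rho_{E,\mm_S^\infty}$ in $\GL_2(\Z_{\mm_S})$ and is what forces $\ell\nmid M$; and (ii) recalling that the choice $k=\Q(j(E))$, together with Remark~\ref{Rmk:Leveljinvariant}, makes $S_E$ --- and hence the hypothesis $\ell\notin S_E$ --- depend only on $j(E)$ and not on the particular model of $E$.
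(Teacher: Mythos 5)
Your proposal is correct and follows the same route the paper has in mind: the paper simply asserts that Corollary~\ref{cor:SurjectivePrimes} is deduced from Corollary~\ref{cor:SporadicPtsFixedjInvariant}, and the deduction is exactly your observation that $\ell\notin S_E\supseteq\Supp(M)$ forces $\gcd(\ell^s,M)=1$, so the pushforward lands on $X_1(1)\isom\PP^1$, which has no sporadic or $\Pisolated$ closed points. The appeal to Remark~\ref{Rmk:Leveljinvariant} is harmless but not actually needed, since Theorem~\ref{thm:DegreeOfMapFixedjInvariant} and Corollary~\ref{cor:SporadicPtsFixedjInvariant} are already stated for an arbitrary closed point $x$ with $j(x)=j(E)$.
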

    
    {In Section~\ref{subsec:SporadicPointsLowerLevel}, specifically Lemma~\ref{lem:LargeGaloisImageDegreeOfMap}, we show that the desired maximal degree growth condition (i.e., the conclusion of Theorem~\ref{thm:DegreeOfMapFixedjInvariant}) is implied by a condition on the degree of field extensions $k(P)/k(bP)$ where $P$ is a point of order $ab$ on a non-CM elliptic curve $E$.  
    We then show that the assumed growth of the Galois representation~\eqref{eq:FullPreimage} implies the hypothesis of Lemma~\ref{lem:LargeGaloisImageDegreeOfMap} in two different cases. First, for maps $X_1(n) \to X_1(n\ell^{-1})$ for prime divisors $\ell$ of $n$ outside of $S_E$ (see Section~\ref{subsec:SurjectivePrimes}), and second, for maps $X_1(ab)\to X_1(a)$ for integers $a,b$ with bounded support (see Section~\ref{subsec:FixedSupport}).  The results of these two sections are brought together in Section~\ref{subsec:ProofOfFixedjInvariant} to prove Theorem~\ref{thm:DegreeOfMapFixedjInvariant}.}

    \begin{remark}
        As discussed in Section~\ref{sec:Serre}, the full strength of Serre's Open Image Theorem implies that for any non-CM elliptic curve $E/k$, there exists a positive integer {$M_E$} such that
        \[
            \im \rho_{E} = \pi^{-1}(\im \rho_{E, {M_E}}).  
        \]
        The arguments in Section~\ref{subsec:FixedSupport} alone then imply that {$\deg(x) = \deg(f)\deg(f(x)),$
        where $f$ denotes the natural map $X_1(n) \to X_1(\gcd(n,M_E))$}, which yields a weaker version of Theorem~\ref{thm:DegreeOfMapFixedjInvariant}.  
        
        While there is not a dramatic difference in the strength of these results for a fixed elliptic curve, the difference is substantial when applied to a family of elliptic curves.  It is well-known that {$M_{E}$} can be arbitrarily large for a non-CM elliptic curve {$E$} over a fixed number field $k$ (see Section~\ref{sec:Serre}). However, for a fixed finite set of places $S$, we prove that $M_{E}(S)$ can be bounded depending only on $[k:\Q]$.  This allows us to obtain the uniform version of Corollary~\ref{cor:SporadicPtsFixedjInvariant}, namely Theorem~\ref{thm:Main} (see \S\ref{sec:Uniform}).
    \end{remark}
 
    \subsection{Field-theoretic condition for maximal degree growth}\label{subsec:SporadicPointsLowerLevel}

    \begin{lemma}\label{lem:LargeGaloisImageDegreeOfMap}
        Let $a$ and $b$ be positive integers, $E$ a non-CM elliptic curve over {a number field} $k$, and $P\in E$ a point of order $ab$. Let $x:=[(E,P)] \in X_1(ab)$ and let $f$ denote the map $X_1(ab) \to X_1(a)$.  If $[k(P) : k(bP)]$ is as large as possible, i.e., if $[k(P): k(bP)] =\# \{Q\in E: bQ = bP, Q \textup{ order }ab\}$, then 
        \[
            \deg(x) = {\deg(f)}\deg(f(x)).
        \]
    \end{lemma}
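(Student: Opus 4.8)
The plan is to prove directly that $[\kk(x):\kk(f(x))] = \deg(f)$; combined with $\deg(x) = [\kk(x):\kk(f(x))]\cdot\deg(f(x))$ this is exactly the assertion. One inequality is free: since $f$ has degree $\deg(f)$, every closed point $x$ satisfies $[\kk(x):\kk(f(x))]\le \deg(f)$. So it suffices to prove $[\kk(x):\kk(f(x))]\ge \deg(f)$.

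First I would set up the fields. By the moduli description of residue fields on $X_1$ (cf.\ the discussion preceding Lemma~\ref{lem:degree}), $\kk(x) = \Q(j(E),\mathfrak h_E(P))$ and $\kk(f(x)) = \Q(j(E),\mathfrak h_E(bP))$ for a Weber function $\mathfrak h_E$, which we may take to be $k$-rational since $E$ is given over $k$; then $\mathfrak h_E(Q)\in k(Q)$ for all $Q\in E$. Embedding everything compatibly in $\overline{\Q}$ via the point $[(E,P)]$, this gives $\kk(f(x))\subseteq \kk(x)$ and $\kk(f(x))\subseteq k(bP)$, hence $\kk(f(x))\subseteq \kk(x)\cap k(bP)$. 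Set $L := k(bP)\cdot\kk(x)$, which equals $k(bP)(\mathfrak h_E(P))$ because $j(E)\in k\subseteq k(bP)$. The standard compositum bound yields
\[
    [L:k(bP)]\ \le\ [\kk(x):\kk(x)\cap k(bP)]\ \le\ [\kk(x):\kk(f(x))],
\]
so it is enough to show $[k(bP)(\mathfrak h_E(P)):k(bP)] \ge \deg(f)$, i.e.\ that the $\Gal_{k(bP)}$-orbit of $\mathfrak h_E(P)$ has size at least $\deg(f)$.

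Now I would pass to torsion points. Let $\mathcal P := \{Q\in E: bQ = bP,\ \ord(Q)=ab\}$; since it fixes $E$ and $bP$, the group $\Gal_{k(bP)}$ acts on $\mathcal P$, and the hypothesis says precisely that this action is transitive. As $\mathfrak h_E$ is $k$-rational, the map $\mathcal P\to\overline{\Q}$, $Q\mapsto\mathfrak h_E(Q)$, is $\Gal_{k(bP)}$-equivariant, so the $\Gal_{k(bP)}$-orbit of $\mathfrak h_E(P)$ equals the whole image $\{\mathfrak h_E(Q):Q\in\mathcal P\}$. It remains to check that this image has exactly $\deg(f)$ elements. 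Since $E$ is non-CM, $\Aut(E)=\{\pm1\}$ and a Weber function has fibers exactly the sets $\{Q,-Q\}$; using $b(-Q)=-bP$ one sees that $\{\mathfrak h_E(Q):Q\in\mathcal P\}$ is in bijection, via $Q\mapsto[(E,Q)]$, with the geometric fiber $f^{-1}([(E,bP)]) = \{[(E,Q)]:bQ=\pm bP,\ \ord(Q)=ab\}$. Because $[(E,bP)]$ is non-cuspidal with $j(E)\notin\{0,1728\}$ and $f$ is unramified over such points (the cover $Y_1(ab)\to Y_1(a)$ is étale away from cusps and the points with $j\in\{0,1728\}$), this fiber has $\deg(f)$ points. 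Equivalently, one can compute $\#\mathcal P = b^2\prod_{p\mid b,\,p\nmid a}(1-1/p^2)$ prime-by-prime and observe that $Q\mapsto\mathfrak h_E(Q)$ is $(1/c_f)$-to-one on $\mathcal P$, which matches $\deg(f)$ by Proposition~\ref{prop:Degree}. Hence $[L:k(bP)]=\deg(f)$, and the lemma follows.

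The only delicate point is the last one: correctly identifying $\kk(f(x))$ with a subfield of $k(bP)$ through the Weber function, and then identifying $\{\mathfrak h_E(Q):Q\in\mathcal P\}$ with the degree-$\deg(f)$ fiber of $f$, including the factor of two caused by $-I\in\Gamma_1(a)$ when $a\le 2$ (equivalently, the constant $c_f$ in Proposition~\ref{prop:Degree}). These are bookkeeping steps requiring care with the coarse moduli interpretation, but no further ideas.
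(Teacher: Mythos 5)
Your proof is correct, and it takes a genuinely different route from the paper. The paper's proof works directly with the formula $\deg(x)=c_x[k(P):\Q]$ from Lemma~\ref{lem:degree}, splitting into the three cases $a\le 2,\,ab>2$; $ab\le 2$; and $a>2$, and in each case carefully tracking the constants $c_x$, $c_{f(x)}$, and the factor of $2$ in \eqref{eq:DegreeModularCurveMap} so that $\deg(x)=\deg(f)\deg(f(x))$ falls out. You instead reduce to a single clean statement, $[\kk(x):\kk(f(x))]=\deg(f)$, obtain the upper bound for free from $\deg(f)$, and prove the lower bound by a compositum inequality that trades the problem for computing $[k(bP)(\mathfrak h_E(P)):k(bP)]$, which by the transitivity of the $\Gal_{k(bP)}$-action on $\mathcal P$ (the content of the hypothesis) equals $\#\mathfrak h_E(\mathcal P)$. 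The case analysis in the paper's proof reappears in your argument only in the final count $\#\mathfrak h_E(\mathcal P)=\deg(f)$, which you can do either geometrically (unramifiedness of $X_1(ab)\to X_1(a)$ over non-cuspidal points with $j\neq 0,1728$) or by matching the $(1/c_f)$-to-one multiplicity of $\mathfrak h_E$ on $\mathcal P$ against Proposition~\ref{prop:Degree}; the latter is the more elementary option and avoids invoking facts about ramification of the modular cover. Both approaches are of comparable length; yours is arguably more conceptual and makes the role of the transitivity hypothesis, and the fiber of $f$, more transparent, while the paper's is more self-contained relative to the tools it has already set up (Lemma~\ref{lem:degree}).
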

    \begin{proof}
        From the definition of $X_1(n)$, we have that
        \begin{equation}\label{eq:DegreeModularCurveMap}
            \#\{Q\in E : bQ = bP, Q \textup{ order }ab\} = 
            \begin{cases}
                2\deg(X_1(ab) \to X_1(a)) & \textup{if }a\leq2
                \textup{ and }ab>2,\\
                \phantom{2}\deg(X_1(ab) \to X_1(a)) & \textup{otherwise}.
            \end{cases}    
        \end{equation}
        Let us first consider the case that $a \leq 2$ and $ab > 2$.  Then $\deg(f(x)) = [k(bP):\Q]$.  Since $[k(P):k(bP)]$ is as large as possible and $a\leq 2$, there must be a $\sigma \in \Gal_k$ such that $\sigma(P) = -P$.  Hence $\deg(x) = \frac12[k(P):\Q]$ by Lemma \ref{lem:degree}, so~\eqref{eq:DegreeModularCurveMap} yields the desired result.

        Now assume that $ab \leq 2$.  Then $\deg(f(x)) = [k(bP):\Q]$ and $\deg(x) = [k(P):\Q]$, so~\eqref{eq:DegreeModularCurveMap} again yields the desired result. 

        Finally we consider the case when $a>2$.  Note that for any point $y\in X_1(ab)$, $\deg(y) \leq \deg(f(y)) \cdot \deg(X_1(ab)\to X_1(a))$.  Combining this with~\eqref{eq:DegreeModularCurveMap}, it remains to prove that
		\[
			\frac{\deg(x)}{\deg(f(x))} \geq \#\{Q \in E : bQ = bP, Q \textup{ order }ab\}.
		\]
		{By Lemma \ref{lem:degree}, $\deg(x) = c_x \cdot [k(P): \Q]$ and $\deg(f(x)) = c_{f(x)}\cdot [k(bP):\Q]$ where $c_x, c_{f(x)} \in \{1, 1/2\}.$  
		Since any $\sigma\in \Gal_k$ that sends $P$ to $-P$ also sends $bP$ to $-bP$}, $c_x\geq c_{f(x)}$ and so these arguments together show that
		\[
			\frac{\deg(x)}{\deg(f(x))} = \frac{c_x  [k(P): \Q]}{c_{f(x)} [k(bP):\Q]} = \frac{c_x}{c_{f(x)}} [k(P):k(bP)] \geq [k(P):k(bP)].
		\] 
		By assumption, $[k(P):k(bP)] = \#\{Q \in E : bQ = bP, Q \textup{ order }ab\}$, yielding the desired inequality.
    \end{proof}

    \subsection{Eliminating primes with {large} Galois representation}\label{subsec:SurjectivePrimes}
    \begin{prop}\label{prop:EliminatingSurjectivePrimes}    
        Let $E$ be a non-CM elliptic curve over {a number field $k$}, let $\ell$ be a prime {not contained in $S_E$}, and let $a$ and $s$ {be positive integers}.  Let $x\in X_1(a\ell^s)$ {be a closed point with $j(x) = j(E)$ and let $f\colon X_1(a\ell^s) \to X_1(a)$ be the natural map.  Then
        \[
            \deg(x) = \deg(f)\deg(f(x)).    
        \]}
    \end{prop}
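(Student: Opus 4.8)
The plan is to reduce this to Lemma~\ref{lem:LargeGaloisImageDegreeOfMap}, applied with its ``$b$'' taken to be $\ell^s$, and then to verify that lemma's hypothesis using Proposition~\ref{prop:surjectiveSL}. Note first that $x$ is necessarily non-cuspidal and that $\{2,3\}\subseteq S_E$ forces $\ell\geq 5$. I would begin by reducing to the case $k=\Q(j(E))$: choose an elliptic curve $E_0$ over $\Q(j(E))$ underlying $x$, together with a point $P\in E_0$ of order exactly $a\ell^s$ with $x=[(E_0,P)]$. Since $E$ is non-CM, $E_0$ is a quadratic twist of a model of $E$ over $\Q(j(E))$, and both quadratic twisting and passing to a subfield of $k$ change the $\ell$-adic image only up to the scalar $-I$; because $\SL_2(\Z/\ell^r\Z)$ has no subgroup of index $2$ for $\ell\geq 5$ (and because the unique index-$2$ subgroup of $\GL_2(\Z/5\Z)$ contains $-I$), the hypothesis $\ell\notin S_E$ then gives $\rho_{E_0,\ell^\infty}(\Gal_{\Q(j(E))})\supseteq\SL_2(\Z_\ell)$, with equality to $\GL_2(\Z_5)$ if $\ell=5$. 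After renaming, we may assume $E/k$ has this property, and by Lemma~\ref{lem:LargeGaloisImageDegreeOfMap} it is enough to show that $[k(P):k(\ell^s P)]$ is maximal, i.e.\ equals $\#S$, where $S:=\{Q\in E : \ell^s Q=\ell^s P \textup{ and } Q \textup{ has order } a\ell^s\}$.

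For this, set $e:=s+\ord_\ell(a)$ and $m:=a\ell^s/\ell^e$, and split $P=P_\ell+P_m$ into its $\ell$-primary part $P_\ell$ (of order $\ell^e$) and prime-to-$\ell$ part $P_m$ (of order $m$); then $k(\ell^s P)=k(\ell^s P_\ell,\,P_m)$, since $\ell^s P_m$ generates $\langle P_m\rangle$ and both $\ell^s P_\ell$ and $\ell^s P_m$ are multiples of $\ell^s P$. Let $H:=\{\sigma\in\Gal_k : \sigma(\ell^s P)=\ell^s P\}$. As $k(\ell^s P)$, resp.\ $k(P)$, is the fixed field of $H$, resp.\ of the subgroup $\{\sigma\in\Gal_k : \sigma(P)=P\}\subseteq H$, we get $[k(P):k(\ell^s P)]=\#(H\cdot P)$, and clearly $H\cdot P\subseteq S$; so it suffices to prove that $H$ acts transitively on $S$. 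Fix generators of $E[a\ell^s]$, identify $G:=\rho_{E,a\ell^s}(\Gal_k)$ with a subgroup of $\GL_2(\Z/\ell^e\Z)\times\GL_2(\Z/m\Z)$, and write $v_Q\in(\Z/\ell^e\Z)^2$ for the $\ell$-primary coordinate vector of a torsion point $Q$. The hypotheses of Proposition~\ref{prop:surjectiveSL} hold for the prime $\ell$ by the previous paragraph, so (with $s=e$ and $n=m$ there) it gives $\SL_2(\Z/\ell^e\Z)\times\{I\}\subseteq\ker\!\bigl(G\to\rho_{E,m}(\Gal_k)\bigr)\subseteq G$. Now take $Q=P+T\in S$ with $T\in E[\ell^s]$. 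The prime-to-$\ell$ part of $Q$ equals $P_m$, hence still has order $m$, so the $\ell$-primary part $P_\ell+T$ of $Q$ has order exactly $\ell^e$; thus $v_{P_\ell}$ and $v_{P_\ell+T}=v_{P_\ell}+v_T$ are both primitive vectors (i.e.\ of order $\ell^e$) in $(\Z/\ell^e\Z)^2$. Since $\SL_2(\Z/\ell^e\Z)$ acts transitively on primitive vectors, there is $g_0\in\SL_2(\Z/\ell^e\Z)$ with $g_0 v_{P_\ell}=v_{P_\ell}+v_T$, and then $(g_0,I)\in G$. Any lift $\sigma\in\Gal_k$ of $(g_0,I)$ fixes $P_m$, sends $P_\ell$ to $P_\ell+T$ (hence $P$ to $Q$), and fixes $\ell^s P_\ell$ because $\sigma(\ell^s P_\ell)=\ell^s(P_\ell+T)=\ell^s P_\ell$ (as $\ell^s T=0$); therefore $\sigma$ fixes $k(\ell^s P)$, i.e.\ $\sigma\in H$, and $\sigma(P)=Q$. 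Hence $H\cdot P=S$, so $[k(P):k(\ell^s P)]=\#S$.

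This is precisely the hypothesis of Lemma~\ref{lem:LargeGaloisImageDegreeOfMap}, which therefore yields $\deg(x)=\deg(f)\deg(f(x))$. The one step carrying real content is the middle one --- translating the group-theoretic output of Proposition~\ref{prop:surjectiveSL} (``$\SL_2(\Z/\ell^e\Z)$ sits in the kernel toward prime-to-$\ell$ level'') into the field-degree statement, and especially checking that the matrix $g_0$ produced by transitivity genuinely fixes $\ell^s P$, hence all of $k(\ell^s P)$. Everything of real depth is already packaged in Proposition~\ref{prop:surjectiveSL} and, behind it, the subquotient classification of $\GL_2(\Z/n\Z)$; the remaining points I would watch most closely are the descent to $\Q(j(E))$ and the $\ell=5$ versus $\ell>5$ case split, both of which are routine but easy to fumble.
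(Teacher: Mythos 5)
Your proof is correct, and it rests on the same two pillars as the paper's: Lemma~\ref{lem:LargeGaloisImageDegreeOfMap} to translate ``maximal degree growth'' into a field-degree condition, and Proposition~\ref{prop:surjectiveSL} to extract an $\SL_2$ inside the relevant kernel. The divergence is in how the bridge between them is built. The paper first reduces to the case $\ell\nmid a$ (via a factorization of $f$ through $X_1(b)$ with $a=b\ell^t$), then sandwiches $[k(P):k(\ell^s P)]$ between two expressions using the field towers $k(E[a\ell^s])\supset k(P)\supset k(\ell^sP)$ and $k(E[a\ell^s])\supset k(E[a])\supset k(\ell^sP)$, with Proposition~\ref{prop:surjectiveSL} forcing equality via a cardinality count on the chain~\eqref{eq:EqualityOfQuotients}. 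You skip the $\ell\nmid a$ reduction entirely, working with $e=s+\ord_\ell(a)$ directly, and instead of counting indices you give an explicit orbit argument: decompose $P$ into $\ell$-primary and prime-to-$\ell$ parts, observe that $k(\ell^sP)=k(\ell^sP_\ell,P_m)$, and use the transitivity of $\SL_2(\Z/\ell^e\Z)$ on primitive vectors (applied inside the $\SL_2(\Z/\ell^e\Z)\times\{I\}$ that Proposition~\ref{prop:surjectiveSL} supplies) to exhibit, for each $Q\in S$, an element of $\Gal_{k(\ell^sP)}$ sending $P$ to $Q$. The paper also implicitly uses transitivity of $\SL_2$ on primitive vectors, to identify the cardinalities at the two ends of~\eqref{eq:EqualityOfQuotients}, so the mathematical content is the same; your version just makes the mechanism visible. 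You also spell out the quadratic-twist / base-change bookkeeping needed to pass from $E/k$ to an actual model $E_0/\Q(j(E))$ underlying $x$; the paper treats this tersely (essentially delegating it to Remark~\ref{Rmk:Leveljinvariant}), and your care here is warranted. The one phrasing worth tightening: when $\ell=5$ you invoke the index-$2$ subgroup of $\GL_2(\Z/5\Z)$ but the relevant group for the twist argument is $\GL_2(\Z_5)$; the unique index-$2$ subgroup of $\GL_2(\Z_5)$ also contains $-I$ (it is $\det^{-1}$ of the squares in $\Z_5^\times$), so the conclusion stands, but the parenthetical as written refers to the wrong group.
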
        
    \begin{proof}
        {Write $a = b\ell^t$ where $\ell\nmid b$, let $g$ denote the map $X_1(a) \to X_1(b)$ and let $h\colon X_1(a\ell^s) \to X_1(b)$ be the composition $g\circ f$.  Since $\deg(h) = \deg(f)\deg(g)$, the general case follows from the case when $\ell\nmid a$.  We work with this assumption for the remainder of the proof.}

        {Let $P\in E$ be a point of order $a\ell^s$ such that $x = [(E,P)]$ and for any $c|a\ell^s$, let $B_{c}^1\subset \Aut(E[c])$ be the stabilizer of $\frac{a\ell^s}{c}P$.}  Let $H$ denote the kernel of the projection map $\im \rho_{E, a\ell^s} \to \im \rho_{E, a}$.

        {We wish to prove $[k(P) : k(\ell^sP)] =  \#\{ Q\in E: \ell^sQ=\ell^sP,\; Q \text{ order } a\ell^s\}$, so that we can apply Lemma~\ref{lem:LargeGaloisImageDegreeOfMap}.}  Note that we always have the following upper bound
        \[
            \#\left(\Aut(E[\ell^s])/B_{\ell^s}^1\right) = \#\{ Q\in E: \ell^sQ=\ell^sP,\; Q \text{ order } a\ell^s\} \geq [k(P):k(\ell^sP)].
        \]
        We may also apply Galois theory to the towers of fields $k(E[a\ell^s])\supset k(P) \supset k(\ell^sP)$ and $k(E[a\ell^s])\supset k(E[a]) \supset k(\ell^sP)$ to obtain the following lower bound.
        \begin{align*}
            [k(P):k(\ell^s P)] & = 
            \frac{[k(E[a\ell^s]):k(E[a])]\cdot
            [k(E[a]):k(\ell^s P)]}{[k(E[a\ell^s]):k(P)]}
            = \frac{\#H \cdot \#\left(\im \rho_{E,a}\cap \BB{a}\right)}
            {\#\left(\im \rho_{E,a\ell^s}\cap \BB{a\ell^s}\right)}\\
            & \geq \frac{\#H \cdot \#\left(\im \rho_{E,a}\cap \BB{a}\right)}
            {\#(H\cap \BB{a\ell^s})\cdot\#\left(\im \rho_{E, a} \cap \BB{a}\right)}
            = \frac{\#H}{\#(H\cap \BB{a\ell^s})} = \frac{\#H}{\#(H\cap \BB{\ell^s})}.
        \end{align*}

        {Since {$\ell\not\in S_E$}, we may use Proposition~\ref{prop:surjectiveSL} to conclude that $H$ must contain $\SL_2(\Z/\ell^s\Z).$} Therefore we have set inclusions
        \begin{equation}\label{eq:EqualityOfQuotients}
            \SL_2(\Z/\ell^s\Z)/\left(\SL_2(\Z/\ell^s\Z)\cap \BB{\ell^s}\right)\hookrightarrow
            H/\left(H\cap\BB{\ell^s}\right) \hookrightarrow 
            \Aut(E[\ell^s])/\BB{\ell^s}.
        \end{equation}

        Since the sets on the right and the left of \eqref{eq:EqualityOfQuotients} have the same cardinality, all inclusions in \eqref{eq:EqualityOfQuotients} must be bijections.  {Hence, the upper and lower bounds obtained above agree, and, in particular, $[k(P) : k(\ell^sP)] =  \#\{ Q\in E: \ell^sQ=\ell^sP,\; Q \text{ order } a\ell^s\}$ as desired.}
    \end{proof}

     \subsection{Maps between $\{X_1(n)\}$ where $n$ has specified support}\label{subsec:FixedSupport}
        \begin{prop} \label{prop:FixedSupportAndLevel}
            Let $E$ be a non-CM elliptic curve over {a number field $k$}, let $S$ be a finite set of primes, and let $\mm_S := \prod_{\ell\in S}\ell$. Let $M = M_E(S)$ be a positive integer with $\Supp(M) \subset S$ such that
            \[
            \im \rho_{E, \mm_S^{\infty}} = \pi^{-1}(\im \rho_{E, M})
            \]
            and let $a$ and $b$ be positive integers with $ \gcd(ab, M) | a$ and $\Supp(ab) \subset S$.
           {Let $x\in X_1(ab)$ be a closed point with $j(x) = j(E)$ and let $f$ denote the natural map $X_1(ab) \to X_1(a).$ Then
            \[
                \deg(x) = \deg(f)\deg(f(x)).    
            \]}
        \end{prop}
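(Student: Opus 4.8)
The plan is to reduce, via Lemma~\ref{lem:LargeGaloisImageDegreeOfMap}, to a purely group‑theoretic statement about the Galois image. Writing $x = [(E,P)]$ for a point $P\in E$ of order $ab$ (so that $f(x) = [(E,bP)]$), it suffices to show that $[k(P):k(bP)]$ is as large as possible, i.e.\ that $[k(P):k(bP)] = N$, where $N := \#\{Q\in E : bQ = bP,\ Q\text{ of order }ab\}$. Combining equation~\eqref{eq:DegreeModularCurveMap} from the proof of Lemma~\ref{lem:LargeGaloisImageDegreeOfMap} with Proposition~\ref{prop:Degree} gives $N = b^2\prod_{p\mid b,\,p\nmid a}(1-p^{-2})$, a quantity depending only on $a$ and $b$. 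The inequality $[k(P):k(bP)]\le N$ is immediate, since $\Gal_{k(bP)}$ permutes $\{Q : bQ = bP,\ Q\text{ of order }ab\}$ and $k(P)$ is the fixed field of the stabilizer of $P$; the content is the reverse inequality.

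For the lower bound I would copy the argument in the proof of Proposition~\ref{prop:EliminatingSurjectivePrimes}. Set $H := \ker(\im\rho_{E,ab}\to\im\rho_{E,a})$ and, for $c\mid ab$, let $\BB{c}\subset\Aut(E[c])$ be the stabilizer of $\tfrac{ab}{c}P$. Applying Galois theory to the towers $k(E[ab])\supset k(P)\supset k(bP)$ and $k(E[ab])\supset k(E[a])\supset k(bP)$, together with the bound $\#(\im\rho_{E,ab}\cap\BB{ab})\le \#(H\cap\BB{ab})\cdot\#(\im\rho_{E,a}\cap\BB{a})$ coming from reduction mod $a$, yields
\[
    [k(P):k(bP)] = \frac{\#H\cdot\#(\im\rho_{E,a}\cap\BB{a})}{\#(\im\rho_{E,ab}\cap\BB{ab})} \;\ge\; \frac{\#H}{\#(H\cap\BB{ab})}.
\]
So it remains to prove $\#H/\#(H\cap\BB{ab}) = N$.

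The heart of the matter — and the only place the hypotheses enter — is that $H$ is as large as the ambient group permits, namely $H = \ker\bigl(\GL_2(\Z/ab\Z)\to\GL_2(\Z/a\Z)\bigr)$. Here is how I would argue it. The hypothesis $\im\rho_{E,\mm_S^\infty}=\pi^{-1}(\im\rho_{E,M})$ (with $\pi$ the reduction $\GL_2(\Z_{\mm_S})\to\GL_2(\Z/M\Z)$, which makes sense because $\Supp(M)\subseteq S$) says precisely that $\im\rho_{E,\mm_S^\infty}$ contains $\ker\bigl(\GL_2(\Z_{\mm_S})\to\GL_2(\Z/M\Z)\bigr)$. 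Since $\Supp(ab)\subseteq S$, reducing modulo $ab$ shows that $\im\rho_{E,ab}$ contains the image of this kernel, which is $\ker\bigl(\GL_2(\Z/ab\Z)\to\GL_2(\Z/\gcd(ab,M)\Z)\bigr)$; and since $\gcd(ab,M)\mid a$, this kernel contains $\ker\bigl(\GL_2(\Z/ab\Z)\to\GL_2(\Z/a\Z)\bigr)$. Thus $\im\rho_{E,ab}$ contains the full kernel of reduction mod $a$ and surjects onto $\im\rho_{E,a}$, hence equals the full preimage of $\im\rho_{E,a}$, and consequently $H = \ker\bigl(\GL_2(\Z/ab\Z)\to\GL_2(\Z/a\Z)\bigr)$. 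I expect this step — the interplay of the level hypothesis with the divisibility $\gcd(ab,M)\mid a$ — to be the main obstacle; it is also precisely what separates this result from the weaker version obtainable from Serre's Open Image Theorem alone, where one has no control on the level.

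It then remains to compute $\#H/\#(H\cap\BB{ab})$, which is routine. With $H$ the full kernel, $\#H = \#\GL_2(\Z/ab\Z)/\#\GL_2(\Z/a\Z) = b^4\prod_{p\mid b,\,p\nmid a}(1-p^{-1})(1-p^{-2})$; and choosing a basis of $E[ab]$ with $P=(1,0)$, the group $H\cap\BB{ab}$ consists of the matrices $\left(\begin{smallmatrix}1&\beta\\0&\delta\end{smallmatrix}\right)$ with $\beta\equiv 0$, $\delta\equiv 1\pmod{a}$ and $\delta$ a unit, so $\#(H\cap\BB{ab}) = b\cdot\#\ker\bigl((\Z/ab\Z)^\times\to(\Z/a\Z)^\times\bigr) = b^2\prod_{p\mid b,\,p\nmid a}(1-p^{-1})$. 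Dividing gives $\#H/\#(H\cap\BB{ab}) = b^2\prod_{p\mid b,\,p\nmid a}(1-p^{-2}) = N$. Combined with the lower bound above and the elementary upper bound $[k(P):k(bP)]\le N$, this forces $[k(P):k(bP)] = N$, and Lemma~\ref{lem:LargeGaloisImageDegreeOfMap} then gives $\deg(x) = \deg(f)\deg(f(x))$.
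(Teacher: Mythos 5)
Your proof is correct and follows essentially the same approach as the paper's: both establish that $\im\rho_{E,ab}$ is the full preimage of $\im\rho_{E,a}$ (the paper via the auxiliary $M':=\lcm(a,M)$, you by tracking the kernel of reduction through the divisibility $\gcd(ab,M)\mid a$) and then invoke Lemma~\ref{lem:LargeGaloisImageDegreeOfMap}. The Galois-tower and explicit cardinality computation you carry out, modeled on the proof of Proposition~\ref{prop:EliminatingSurjectivePrimes}, simply unpacks the step the paper asserts more briskly, namely that full-preimage forces $[k(P):k(bP)]$ to be maximal.
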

        \begin{proof}
            Let $M' := \lcm(a, M)$ and let $n = ab$.  By definition, $\im \rho_{E, n}$ is the mod $n$ reduction of $\im \rho_{E, \mm_S^{\infty}}$ and $\im \rho_{E, a}$ is the mod $a$ reduction of $\im \rho_{E, M'}$.  Since $\im \rho_{E, \mm_S^{\infty}} = \pi^{-1}(\im \rho_{E, M})$, this implies that
            \[
                \im \rho_{E, \mm_S^{\infty}} = \pi^{-1}(\im \rho_{E, M'})
                \quad \textup{ and that }\quad
                \im \rho_{E, n} = \pi^{-1}(\im \rho_{E, a}),
            \]
            where by abuse of notation, we use $\pi$ to denote both natural projections. In other words, the mod $n$ Galois representation is as large as possible given the mod $a$ Galois representation.  Hence, for any $P\in E$ of order $n$, the extension $[k(P):k(bP)]$ is as large as possible, i.e., $[k(P):k(bP)] = \# \{Q\in E : bQ = bP, Q \textup{ order }n\}$.  In particular this applies to a point $P\in E$ such that $x=[(E, P)]\in X_1(n).$ Therefore, Lemma~\ref{lem:LargeGaloisImageDegreeOfMap} completes the proof.
        \end{proof}

 	\subsection{Proof of Theorem~\ref{thm:DegreeOfMapFixedjInvariant}}
	\label{subsec:ProofOfFixedjInvariant}
       {Let $x\in X_1(n)$ be a closed point with $j(x) = j(E)$ and write $n = n_0n_1$ where $\Supp(n_0) \subset S$ and $\Supp(n_1)$ is disjoint from $S$.  Note that $\gcd(n, M)|n_0$.  We factor the map $f$ as
        \[
            X_1(n) \stackrel{f_1}{\to} X_1(n_0) \stackrel{f_2}{\to} X_1(\gcd(n,M)).     
        \]
        By inductively applying Proposition~\ref{prop:EliminatingSurjectivePrimes} to powers of primes $\ell\notin S_E$, we see that $\deg(x) = \deg(f_1)\deg(f_1(x))$.  Then we apply Proposition~\ref{prop:FixedSupportAndLevel} with $a = \gcd(n, M)$, $b = n_0/\gcd(n, M)$ to show that
        \[
            \deg(f_1(x)) = \deg(f_2)\deg(f_2(f_1(x))). 
        \]}

\section{Proof of Theorem~\ref{thm:Main}}\label{sec:Uniform}

    In this section we prove Theorem~\ref{thm:Main}. 
    For a fixed number field $k$, Conjecture~\ref{conj:Uniformity} implies that there is a finite set of primes $S = S(k)$ such that for all {non-CM} elliptic curves $E/k$, $S \supset S_{E/k}$ (see~\eqref{eq:DefnOfSE}).  Furthermore, Conjecture~\ref{conj:StrongUniformity} implies that $S(k)$ can be taken to depend only on $[k:\Q]$. Thus, to deduce Theorem~\ref{thm:Main} from Corollary~\ref{cor:SporadicPtsFixedjInvariant}, it suffices to show that for any positive integer $d$ and any finite set of primes $S$, there is an integer $M = M_d(S)$ such that for all number fields $k$ of degree $d$ and all {non-CM} elliptic curves $E/k$, we have
    \[
        \im \rho_{E, \mm_S^{\infty}} = \pi^{-1}(\im \rho_{E, M}).
    \]
    Hence Proposition~\ref{prop:UniformLevelFiniteSetPrimes} completes the proof of Theorem~\ref{thm:Main}.
    
    \begin{prop}\label{prop:UniformLevelFiniteSetPrimes}
		Let $d$ be a positive integer, $S$ a finite set of primes, and $\calE$ a set of non-CM elliptic curves over number fields of degree at most $d$.
		\begin{enumerate}
            \item\label{item1}\label{part:TheoreticalBound} There exists a  positive integer $M$ with $\Supp(M) \subset S$ such that for all $E/k \in \calE$ 
            \[
				\im \rho_{E, \frakm_S^{\infty}} = \pi^{-1}(\im \rho_{E, M}).
            \]
			\item\label{item2}\label{part:ExplicitBoundByInduction} {Let $M_d(S,\calE)$ be the smallest such $M$ as in~\eqref{item1} and for all }$\ell\in S$, define
			\[
                {\tau  = \tau_{S,\calE, \ell} := 
                \max_{E/k \in \calE} 
                \left( v_{\ell}\left(\#\im\rho_{E, \frakm_{\Snoell}}\right)\right) \leq 
                v_{\ell}(\#\GL_2(\Z/\frakm_{\Snoell}\Z)).}
			\]
			Then $v_{\ell}(M_d(S,\calE)) \leq \max(v_{\ell}(M_d(\{\ell\}, \calE)), v_{\ell}(2\ell)) + \tau$.
		\end{enumerate}
	\end{prop}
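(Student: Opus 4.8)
The plan is to prove the sharp pointwise inequality
$v_\ell(M_E(S)) \le \max\!\bigl(v_\ell(M_E(\{\ell\})),\, v_\ell(2\ell)\bigr) + v_\ell\bigl(\#\im\rho_{E,\frakm_{S\setminus\{\ell\}}}\bigr)$
for each individual $E/k\in\calE$ and each $\ell\in S$; taking $\max_{E/k\in\calE}$ then yields \eqref{item2}, and since the right-hand side is then visibly finite, the integer $M := \prod_{\ell\in S}\ell^{\sup_{E/k\in\calE}v_\ell(M_E(S))}$ witnesses \eqref{item1}. The base case is the singleton $S=\{\ell\}$, where one only needs that $v_\ell(M_E(\{\ell\}))$ is bounded uniformly over $\calE$. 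Here I would combine Theorem~\ref{thm:UniformIndex}, which gives $[\GL_2(\Z_\ell):\im\rho_{E,\ell^\infty}]\le c(d,\ell)$ with $c(d,\ell)$ independent of $E/k\in\calE$, with Proposition~\ref{prop:MaximalKernelGivesLevel}: writing $i_n := [\GL_2(\Z/\ell^n\Z):\im\rho_{E,\ell^n}]$, each ratio $i_{n+1}/i_n$ is a power of $\ell$ that equals $1$ precisely when $\ker(\im\rho_{E,\ell^{n+1}}\to\im\rho_{E,\ell^n}) = I+\Mat_2(\ell^n\Z/\ell^{n+1}\Z)$; since $i_n\le c(d,\ell)$ for all $n$, some such $s$ occurs with $s_0\le s\le s_0+\log_\ell c(d,\ell)$, and Proposition~\ref{prop:MaximalKernelGivesLevel} then gives $\im\rho_{E,\ell^\infty}=\pi^{-1}(\im\rho_{E,\ell^s})$, so $v_\ell(M_E(\{\ell\}))\le s_0+\log_\ell c(d,\ell)$.

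For general $S$, fix $\ell\in S$ and write $\mm' := \frakm_{S\setminus\{\ell\}}$. The key object is $\hat N$, the kernel of $\im\rho_{E,\mm'\ell^{\infty}}\to\im\rho_{E,\mm'}$; since $\im\rho_{E,\mm'\ell^{\infty}}$ is by definition a subgroup of $\GL_2(\Z/\mm'\Z)\times\GL_2(\Z_\ell)$, I view $\hat N$ as a closed subgroup of $\GL_2(\Z_\ell)$, necessarily contained in $\im\rho_{E,\ell^\infty}$. Applying Goursat's lemma (Lemma~\ref{lem:Goursat}) to this product, $[\im\rho_{E,\ell^\infty}:\hat N]$ equals the index in $\im\rho_{E,\mm'}$ of $\ker(\im\rho_{E,\mm'\ell^{\infty}}\to\im\rho_{E,\ell^\infty})$, hence $[\im\rho_{E,\ell^\infty}:\hat N]\le \#\im\rho_{E,\mm'}$, so $v_\ell([\im\rho_{E,\ell^\infty}:\hat N])\le v_\ell(\#\im\rho_{E,\mm'})\le v_\ell(\#\GL_2(\Z/\mm'\Z))$. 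On the other hand, unwinding the definitions, the least $t\ge 1$ with $\im\rho_{E,\mm'\ell^{\infty}}=\pi^{-1}(\im\rho_{E,\mm'\ell^{t}})$ is $\max(1,\lambda)$, where $\ell^\lambda$ is the $\ell$-adic level of $\hat N$; so Proposition~\ref{prop:levelforcomp}, applied to the adelic image $\im\rho_E$ with exponent $\max(1,\lambda)$ at the prime $\ell$ for each $\ell\in S$, gives $v_\ell(M_E(S))\le\max(1,\lambda)$.

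It then remains to bound $\lambda$ in terms of $a := v_\ell(M_E(\{\ell\}))$ (the level exponent of $\im\rho_{E,\ell^\infty}$) and $v_\ell([\im\rho_{E,\ell^\infty}:\hat N])$. Set $a' := \max(a, v_\ell(2\ell))$, so that $G_0 := I+\ell^{a'}\Mat_2(\Z_\ell)$ is contained in $\im\rho_{E,\ell^\infty}$, is a uniformly powerful pro-$\ell$ group, and satisfies $G_0^{\ell^b}=I+\ell^{a'+b}\Mat_2(\Z_\ell)$ for all $b\ge 0$. Since $[G_0:\hat N\cap G_0]$ divides $[\im\rho_{E,\ell^\infty}:\hat N]$ and is a power of $\ell$, it is $\ell^b$ with $b\le v_\ell([\im\rho_{E,\ell^\infty}:\hat N])$; the finite quotient $G_0/(\hat N\cap G_0)$ then has exponent dividing $\ell^b$, so $I+\ell^{a'+b}\Mat_2(\Z_\ell)=G_0^{\ell^b}\subseteq\hat N$, i.e. $\lambda\le a'+b\le\max(a,v_\ell(2\ell))+v_\ell([\im\rho_{E,\ell^\infty}:\hat N])$. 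Chaining this with the two inequalities of the previous paragraph produces the pointwise bound on $v_\ell(M_E(S))$, and taking $\max_{E/k\in\calE}$ gives \eqref{item2} (and with it \eqref{item1}, as explained above).

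The hard part will be the structural statement in the third paragraph, namely $G_0^{\ell^b}=I+\ell^{a'+b}\Mat_2(\Z_\ell)$ for congruence subgroups of $\GL_2(\Z_\ell)$, together with its breakdown at the $2$-adic boundary $a=1$ — this is exactly the phenomenon that the correction term $v_\ell(2\ell)$ (and, in the base case, the constraint $s\ge s_0$ in Proposition~\ref{prop:MaximalKernelGivesLevel}) is designed to absorb. Everything else amounts to careful bookkeeping about which level of $\GL_2(\Zhat)$ each of the groups $\im\rho_{E,\bullet}$ is being viewed at, plus routine applications of Goursat's lemma, Proposition~\ref{prop:levelforcomp}, and Theorem~\ref{thm:UniformIndex}.
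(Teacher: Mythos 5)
Your argument is correct in its essentials and reaches the stated bound, but by a genuinely different mechanism from the paper. Both proofs handle the base case $\#S=1$ the same way (Theorem~\ref{thm:UniformIndex} plus Proposition~\ref{prop:MaximalKernelGivesLevel}), both reduce the general case to bounding the $\ell$-adic level of the kernel $\ker(\im\rho_{E,\mm'\ell^\infty}\to\im\rho_{E,\mm'})$ viewed inside $\GL_2(\Z_\ell)$, and both feed into Proposition~\ref{prop:levelforcomp} via Goursat. The divergence is in how that level is bounded. The paper works at the auxiliary modulus $N_i=\prod_{j\ne i}\ell_j^{s_j}$ and tracks \emph{two} sequences of kernels, $K^i_{E,s}$ and $L^i_{E,s}$, showing via a Goursat diagram that the decreasing chain of the $L$-groups inside $\im\rho_{E,N_i}$ stabilizes after at most $v_\ell(\#\im\rho_{E,N_i})$ proper steps, at which point Proposition~\ref{prop:MaximalKernelGivesLevel} kicks in. You instead work at the squarefree modulus $\mm'$, transfer the Goursat index bound to $\hat N$ directly, and then exploit the powerful pro-$\ell$ structure of the congruence subgroup $G_0=I+\ell^{a'}\Mat_2(\Z_\ell)$: the quotient $G_0/(\hat N\cap G_0)$ has exponent dividing $\ell^b$, so $G_0^{\ell^b}\subseteq\hat N$, and $G_0^{\ell^b}=I+\ell^{a'+b}\Mat_2(\Z_\ell)$ closes the argument. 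Your route is more direct and makes the arithmetic of the bound fully transparent in a single displayed inequality; the paper's chain-stabilization argument buys the bound using only the comparatively soft Lang--Trotter lemma (Proposition~\ref{prop:MaximalKernelGivesLevel}) rather than the full uniformly-powerful identity $G_0^{\ell^b}=V_{a'+b}$ that you invoke. The two structural inputs are close cousins (both are ``raising to the $\ell$th power is an isomorphism $V_s/V_{s+1}\to V_{s+1}/V_{s+2}$ for $s\ge s_0$''), but yours is strictly stronger as stated.

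Two small points to tidy up in a final write-up. First, as stated your logic is slightly circular: part~\eqref{item2} refers to $M_d(S,\calE)$, which only makes sense once part~\eqref{item1} is established; you should first define $M^\circ:=\prod_{\ell\in S}\ell^{\max_E(\dots)}$, verify $\im\rho_{E,\frakm_S^\infty}=\pi^{-1}(\im\rho_{E,M^\circ})$ for all $E$ (proving \eqref{item1}), and then note $M_d(S,\calE)\mid M^\circ$ to get \eqref{item2}. Second, you flag $G_0^{\ell^b}=I+\ell^{a'+b}\Mat_2(\Z_\ell)$ as ``the hard part'' and leave it unproven; it is standard (cleanest via the $\log$--$\exp$ correspondence on $V_{a'}$, valid for $a'\ge s_0$, which is precisely what your $\max(\cdot,v_\ell(2\ell))$ guarantees), but since it is not quoted from the paper it would need a reference or a short proof. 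Also, after Goursat you write $[\im\rho_{E,\ell^\infty}:\hat N]\le\#\im\rho_{E,\mm'}$; the relevant fact is that the index \emph{divides} $\#\im\rho_{E,\mm'}$, which is what actually gives the $v_\ell$-inequality.
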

	\begin{remark}
		In the proof of Proposition~\ref{prop:UniformLevelFiniteSetPrimes}\eqref{item2}, if $\im\rho_{E,\frakm_{\Snoell}}$ is a Sylow $\ell$-subgroup of $\GL_2(\Z/\frakm_{\Snoell}\Z)$, then a chief series (a maximal normal series) of {$\im\rho_{E,\frakm_{\Snoell}}$} does have length $\tau$. So the bound in \eqref{item2} is sharp if the group structure of $\im\rho_{E,\ell^{\infty}}$ allows. However, given set values for $d, S,$ and $\calE$, information about the group structure of possible Galois representations (rather than just bounds on the cardinality) could give sharper bounds.
    \end{remark}
    \begin{remark}
        {A weaker version of Proposition~\ref{prop:UniformLevelFiniteSetPrimes} follows from~\cite[Proof of Lemma 8]{Jones09v2}.  Indeed, Jones's proof goes through over a number field and for \emph{any} finite set of primes $S$ (rather than only $S = \{2,3,5\}\cup\{p: \im \rho_{E,p} \neq \GL_2(\Z/p\Z)\}\cup \Supp(\Delta_E)$, {which is the case under consideration in~\cite[Lemma 8]{Jones09v2}}) and shows that 
        \[
            v_{\ell}(M_d(S,\calE)) \leq \max(v_{\ell}(M_d(\{\ell\}, \calE)), v_{\ell}(2\ell)) + v_{\ell}(\#\GL_2(\Z/\frakm_{\Snoell}\Z)).
        \]
        The proof here and the one in~\cite{Jones09v2} roughly follow the same structure; however, by isolating the purely group-theoretic components (e.g., Proposition~\ref{prop:levelforcomp}), we are able to obtain a sharper bound in~\eqref{part:ExplicitBoundByInduction}.}
    \end{remark}
    \begin{proof}
        When $\#S = 1$, part~\eqref{part:TheoreticalBound} follows from Theorem~\ref{thm:UniformIndex} and Proposition~\ref{prop:MaximalKernelGivesLevel} and part~\eqref{part:ExplicitBoundByInduction} is immediate.

        {We prove part~\eqref{part:TheoreticalBound} when $\#S$ is arbitrary} by induction using Proposition~\ref{prop:levelforcomp}.  Let $S = \{\ell_1, \dots, \ell_q\}$, let $M_i =  M_d(\{{\ell_i}\}, \calE)$, let $s_i = \max(v_{\ell_i}(M_i), v_{\ell_i}(2\ell_i))$, and let $N_i = \prod_{j\neq i}\ell_j^{s_j}$.  It suffices to show that for all $1\leq i\leq q$ there exists a $t_i \geq s_i$ such that for all $E/k\in \calE$
        \[
            \im \rho_{E, N_i\cdot \ell_i^{\infty}} = \pi^{-1}(\im \rho_{E, N_i\ell_i^{t_i}});  
        \]
        then Proposition~\ref{prop:levelforcomp} implies that we may take $M = \prod_{i}\ell_i^{t_i}.$

        Fix {$i\in \{1,\dots, q\}$}.  For any $E/k\in\calE$ and any $s\geq s_i$, define
        \[
            K_{E,s}^i :=  \ker(\im \rho_{E, N_i\cdot \ell_i^s} \to \im \rho_{E, N_i}), \quad \textup{and} \quad
            L_{E,s}^i :=  \ker(\im \rho_{E, N_i\cdot \ell_i^s} \to \im \rho_{E, \ell_i^s}).
        \]
        By definition, $K^i_{E, s'}$ maps surjectively onto $K^i_{E,s}$ for any $s'\geq s$, so $K^i_{E,s}$ is the mod $N_i\ell_i^s$ reduction of $K_E^i := \ker(\im \rho_{E, N_i\cdot {\ell_i^{\infty}}} \to \im \rho_{E, N_i})$.  Let us now consider $L^i_{E,s}$.  Since $\ell_i\nmid N_i$, $L^i_{E,s}$ can be viewed as a subgroup of $\im\rho_{E, N_i}$ and {we have} $L^i_{E,s'}\subset L^i_{E,s}$ for all $s'\geq s$. Let $r\geq s_i$ be an integer such that $L^i_{E,r} = L^i_{E, r+1}$.
        Then we have the following diagram
        \begin{equation}\label{eq:diag2}
        \xymatrix{
            \im \rho_{E, \ell_i^{r+1}}/K^i_{E,r+1} \ar@{->>}[r] \ar[d]^{\isom} &  \im \rho_{E, \ell_i^{r}}/K^i_{E,r} \ar[d]^{\isom} \\
            \im \rho_{E, N_i} / L^i_{E,r+1} \ar@{=}[r]
             & \im \rho_{E, N_i} / L^i_{E,r}}
        \end{equation}
        where the vertical isomorphisms are given by Goursat's Lemma (Lemma~\ref{lem:Goursat})\footnote{By tracing through the isomorphism given by Goursat's lemma, one can prove that this diagram is commutative.  We do not do so here, since the claims that follow can also be deduced from cardinality arguments.}. Since $r\geq s_i$,  $\im \rho_{E, \ell_i^{r+1}}$ is the full preimage of $\im\rho_{E, \ell_i^r}$ under the natural reduction map. So \eqref{eq:diag2} implies that $K^i_{E,r+1}$ is the full preimage of $K^i_{E, r}$ under the natural reduction map. Then by Proposition~\ref{prop:MaximalKernelGivesLevel}, $K^i_E$ is the full preimage of $K^i_{E, r}$ under the map $\GL_2(\Z_\ell) \to \GL_2(\Z/\ell^r\Z)$ and therefore $\im \rho_{E, N_i\cdot\ell_i^\infty} = \pi^{-1} (\im\rho_{E, N_i\ell_i^r})$.   Hence we may take {$t_{E,i}$} to be the minimal {$r\geq s_i$} such that $L^i_{E,r} = L^i_{E, r+1}$. Since $L^i_{E,s}$ is a subgroup of $\im\rho_{E, N_i} \subset \GL_2(\Z/N_i\Z)$, {$t_{E,i}$} may be bounded independent of $E/k$, depending only on $N_i$. This completes the proof of~\eqref{part:TheoreticalBound}.
            
        It remains to prove~\eqref{part:ExplicitBoundByInduction}.  Let $s\geq s_i$ and consider the following diagram, where again the vertical isomorphisms follow from Goursat's Lemma.
        \begin{equation}\label{eq:diag3}
            \xymatrix{
                \im \rho_{E, \ell_i^{s}}/K^i_{E,s} \ar@{->>}[r] \ar[d]^{\isom} &  \im \rho_{E, \ell^{s_i}}/K^i_{E, s_i} \ar[d]^{\isom} \\
                \im \rho_{E, N_i} / L^i_{E,s} \ar@{->>}[r] & \im \rho_{E, N_i} / L^i_{E,s_i}
            }
        \end{equation}
        The kernel of the top horizontal map is an $\ell_i$-primary subgroup, so the index of $L^i_{E,s}$ in $L^i_{E,s_i}$ is a power of $\ell_i$. Thus, the maximal chain of proper containments $L^i_{E, s_i}\supsetneq L^i_{E, s_i+1}\supsetneq \dots\supsetneq L^i_{E, t_i}$ is bounded by $v_{\ell_i}({\#}\im \rho_{E, N_i}) = v_{\ell_i}({\#}\im \rho_{E, \frakm_{{S-\{\ell_i\}}}})$, 
        which yields~\eqref{part:ExplicitBoundByInduction}.
    \end{proof}

\section{Lifting sporadic points}\label{sec:LiftingSporadic}
    {In this section we study when a sporadic point on $X_1(n)$ lifts to a sporadic point on a modular curve of higher level.}  
    We give a numerical criterion that is sufficient for lifting sporadic points (see Lemma~\ref{lem:Lifting}), and use this to prove that 
    there exist sporadic points such that \emph{every} lift is sporadic. The examples we have identified correspond to {CM elliptic curves}.

    \begin{theorem} \label{thm:CM}
        Let $E$ be an elliptic curve with CM by an order in an imaginary quadratic field $K$. Then for all sufficiently large primes $\ell$ which split in $K$, there exists a sporadic point $x=[(E,P)] \in X_1(\ell)$ with only sporadic lifts. Specifically, for any positive integer $d$ and any point $y \in X_1(d\ell)$ with $\pi(y) = x$, the point $y$ is sporadic, where $\pi$ denotes the natural map $X_1(d\ell) \rightarrow X_1(\ell)$. 
    \end{theorem}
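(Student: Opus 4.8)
The plan is to produce $x$ via the CM construction of Clark--Cook--Rice--Stankewicz~\cite{CCS} and then control \emph{every} lift of $x$ by comparing two quantities: the degree over $\Q$ of a lift, and the $\Q$-gonality of the modular curve on which it lives.  The degree will grow only linearly in $\ell$, while the gonality grows quadratically; and --- crucially --- passing from level $\ell$ to level $d\ell$ multiplies the degree bound by at most $\deg(\pi)\le d^{2}$ while multiplying the gonality lower bound by exactly $d^{2}$.  Hence the governing inequality reduces to one in $\ell$ alone, so the threshold ``$\ell$ sufficiently large'' can be chosen independently of $d$, which is exactly what forces \emph{all} lifts of $x$ to be sporadic.

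First I would fix the CM order $\calO = \End(E)$, of conductor $f$ say, and a model of $E$ over $k_0 := \Q(j(E))$.  For a prime $\ell \nmid f$ that splits in $K$, write $\ell\calO = \mathfrak{l}\overline{\mathfrak{l}}$, let $P$ be a generator of the cyclic group $E[\mathfrak{l}] \cong \Z/\ell\Z$, and set $x = [(E,P)] \in X_1(\ell)$.  The essential input, which I would quote from~\cite{CCS} rather than reprove, is the degree estimate
\[
    \deg(x) \;=\; [\,k_0(\mathfrak{h}(P)):\Q\,] \;\le\; \kappa_E\,\ell,
\]
where $\kappa_E$ depends only on $\calO$ (equivalently, only on $E$).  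This holds because $\Gal_{k_0}$ acts on $E[\ell]$ through the normalizer of a split Cartan subgroup and $P$ spans a Cartan-stable line, so $k_0(\mathfrak{h}(P))$ is contained in a subfield of bounded index inside a ring class field of conductor dividing $f\ell$, which has degree $O_E(\ell)$ over $\Q$; the Weber function $\mathfrak{h}$ and the unit group $\calO^{\times}$ affect only the constant.

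Next I would bring in the gonality.  By Abramovich's theorem~\cite{Abramovich-Gonality}, together with the elementary lower bound $[\SL_2(\Z):\Gamma_1(N)] = N^{2}\prod_{p\mid N}(1-p^{-2}) \ge \tfrac{6}{\pi^{2}}N^{2}$, there is an absolute constant $c_0 > 0$ with $\gon_\Q(X_1(N)) \ge \gon_\C(X_1(N)) \ge c_0 N^{2}$ for all $N \ge 3$.  I would combine this with Frey's criterion~\cite{Frey-InfinitelyManyDegreed} (a consequence of Faltings's theorem~\cite{Faltings}): a closed point $z$ on a curve $C/\Q$ with $\gon_\Q(C) > 2\deg(z)$ is sporadic.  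Taking $C = X_1(\ell)$, $z = x$ gives $\gon_\Q(X_1(\ell)) \ge c_0\ell^{2} > 2\kappa_E\ell \ge 2\deg(x)$ once $\ell > 2\kappa_E/c_0$, so $x$ is sporadic.  For the lifting claim, let $d \ge 1$ and let $y \in X_1(d\ell)$ be any closed point with $\pi(y) = x$.  Since $\kk(x) \subseteq \kk(y)$ and $[\kk(y):\kk(x)] \le \deg(\pi)$, we have $\deg(y) \le \deg(\pi)\deg(x)$; and by Proposition~\ref{prop:Degree} applied with $a = \ell > 2$, $b = d$ (so $c_f = 1$ there) we get $\deg(\pi) = d^{2}\prod_{p\mid d,\,p\nmid\ell}(1-p^{-2}) \le d^{2}$.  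Therefore
\[
    2\deg(y) \;\le\; 2 d^{2}\kappa_E\,\ell \;<\; c_0\, d^{2}\ell^{2} \;\le\; c_0(d\ell)^{2} \;\le\; \gon_\Q(X_1(d\ell))
\]
whenever $\ell > 2\kappa_E/c_0$, which is the \emph{same} condition as for $x$ and does not involve $d$; by Frey's criterion again, $y$ is sporadic.

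The only non-formal ingredient is the degree estimate $\deg(x) = O_E(\ell)$ in the second paragraph --- this is exactly the class-field-theoretic CM computation of~\cite{CCS}, which tracks the split-Cartan image of $\Gal_{k_0}$ on $E[\ell]$, the Weber function, and the unit group $\calO^{\times}$, and which already contains the verification that $x$ is sporadic on $X_1(\ell)$.  I expect this to be the main point to establish carefully --- or, more efficiently, to cite.  Everything downstream is bookkeeping; the feature that makes the theorem work is that the factor $d^{2}$ appears on both sides of the comparison of $\deg(\pi)\deg(x)$ with $\gon_\Q(X_1(d\ell))$, so the sufficient lower bound on $\ell$ is genuinely independent of $d$.
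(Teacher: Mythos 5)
Your proof is correct and takes essentially the same approach as the paper's: produce a low-degree CM point, compare its degree to the $\Q$-gonality via Abramovich and Frey, and exploit the fact that $\deg(\pi)$ and the gonality lower bound both scale by $d^{2}$ when passing from level $\ell$ to $d\ell$, so the threshold on $\ell$ is independent of $d$. The paper packages the lifting step into a separate lemma (Lemma~\ref{lem:Lifting}) keeping the exact index $[\PSL_2(\Z):\Gamma_1(N)]$ so the cancellation is an equality rather than a two-sided estimate, and cites \cite{BourdonClark} rather than \cite{CCS} for the degree formula, but these are cosmetic differences.
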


    The key to the proof of Theorem \ref{thm:CM} is producing a sporadic point of sufficiently low degree so we may apply the following lemma. It is a consequence of Abramovich's lower bound on gonality in \cite{Abramovich-Gonality} and the result of Frey \cite{Frey-InfinitelyManyDegreed} which states that a curve $C_{/K}$ has infinitely many points of degree at most $d$ only if $\gon_K(C) \leq 2d$.

    \begin{lemma}\label{lem:Lifting}
        {Suppose there is a point $x \in X_1(N)$ with 
        \[
        \deg(x) < \frac{7}{1600}[\PSL_2(\Z) : \Gamma _1(N)].
        \]
        {Then $x$ is sporadic and} for any positive integer $d$ and any point $y \in X_1(dN)$ with $\pi(y) = x$, the point $y$ is sporadic, where $\pi$ denotes the natural map $X_1(dN) \rightarrow X_1(N)$.}
    \end{lemma}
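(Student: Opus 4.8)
The plan is to deduce both assertions by combining Abramovich's linear lower bound on the gonality of modular curves \cite{Abramovich-Gonality} with Frey's criterion \cite{Frey-InfinitelyManyDegreed}, after reducing the lifting statement to the sporadicity of a point of small degree on the higher-level curve. The key numerical input is that Abramovich's bound reads $\gon_\C(X_1(M)) \geq \frac{7}{800}[\PSL_2(\Z):\Gamma_1(M)]$ for every positive integer $M$, where the index is exactly the quantity appearing in the lemma, so that the constant $\frac{7}{1600}$ is $\frac12$ times Abramovich's constant, the extra factor of $\frac12$ being precisely the loss in Frey's inequality $\gon_K(C)\leq 2d$.

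First I would isolate the following auxiliary claim: \emph{for every positive integer $M$, any closed point $z\in X_1(M)$ with $\deg(z) < \frac{7}{1600}[\PSL_2(\Z):\Gamma_1(M)]$ is sporadic.} Suppose not; then $X_1(M)$ has infinitely many closed points of degree at most $\deg(z)$, so Frey's theorem gives $\gon_\Q(X_1(M)) \leq 2\deg(z) < \frac{7}{800}[\PSL_2(\Z):\Gamma_1(M)]$. Since $X_1(M)$ is geometrically integral over $\Q$, base change along $\Q\hookrightarrow\C$ does not increase the gonality, so $\gon_\Q(X_1(M)) \geq \gon_\C(X_1(M))$, and Abramovich's bound then yields $\gon_\Q(X_1(M)) \geq \frac{7}{800}[\PSL_2(\Z):\Gamma_1(M)]$, a contradiction. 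Applying this claim with $M=N$ and $z=x$ shows $x$ is sporadic.

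Next, for the lifting assertion, let $y\in X_1(dN)$ be a closed point with $\pi(y)=x$. Then $\kk(x)\subseteq\kk(y)$ with $[\kk(y):\kk(x)]\leq\deg(\pi)$, so $\deg(y) = [\kk(y):\kk(x)]\deg(x) \leq \deg(\pi)\cdot\deg(x)$. The forgetful covering $\pi\colon X_1(dN)\to X_1(N)$ has degree equal to the ratio of indices $[\PSL_2(\Z):\Gamma_1(dN)]/[\PSL_2(\Z):\Gamma_1(N)]$; this follows from the degree formula in Proposition~\ref{prop:Degree} once one tracks the factor of $2$ governed by whether $-I$ lies in the relevant congruence subgroup, which cancels against the corresponding factor in the $\PSL_2(\Z)$-indices. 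Hence
\[
    \deg(y) \;\leq\; \frac{[\PSL_2(\Z):\Gamma_1(dN)]}{[\PSL_2(\Z):\Gamma_1(N)]}\,\deg(x) \;<\; \frac{7}{1600}\,[\PSL_2(\Z):\Gamma_1(dN)],
\]
and the auxiliary claim applied with $M=dN$ and $z=y$ shows $y$ is sporadic.

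I expect the only delicate point to be the bookkeeping in the third paragraph: identifying $\deg(\pi)$ with the ratio of $\PSL_2(\Z)$-indices, since the Riemann-surface degree and the index ratio can differ by a factor of $2$ exactly when $N\leq 2 < dN$, and this factor must be handled consistently on both sides of the inequality. Everything else is a direct assembly of the two cited theorems; I would also note explicitly that Theorem~\ref{thm:PushingForwardSporadicIsolated} does \emph{not} apply here, since it transports sporadicity downward along a finite map under a degree hypothesis, whereas here we need to pass upward from $x$ to its preimages $y$.
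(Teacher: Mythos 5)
Your proof is correct and follows the same Abramovich--Frey strategy as the paper; the only real difference is that you identify $\deg(\pi)$ with the ratio of $\PSL_2(\Z)$-indices, whereas the paper first notes that the hypothesis forces $N>2$ and then applies the Euler-product formula of Proposition~\ref{prop:Degree} with $c_f = 1$. The bookkeeping concern you raise at the end is not actually an issue: as a map of Riemann surfaces, $\deg(\pi)$ is precisely the index of the image of $\Gamma_1(dN)$ in the image of $\Gamma_1(N)$ inside $\PSL_2(\Z)$, so the identity $\deg(\pi) = [\PSL_2(\Z):\Gamma_1(dN)]/[\PSL_2(\Z):\Gamma_1(N)]$ holds unconditionally --- the factor $c_f = \tfrac12$ in Proposition~\ref{prop:Degree} is exactly what converts an $\SL_2(\Z)$-index ratio into a $\PSL_2(\Z)$-index ratio in the case $N \leq 2 < dN$. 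In any case, $[\PSL_2(\Z):\Gamma_1(N)] \leq 3$ when $N \leq 2$, so the degree hypothesis $\deg(x) < \tfrac{7}{1600}[\PSL_2(\Z):\Gamma_1(N)]$ already rules out $N \leq 2$ at the outset.
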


    \begin{proof}
        {We claim that the assumption on the degree of $x$ implies that $\deg(y) < \frac{7}{1600}[\PSL_2(\Z) : \Gamma _1(dN)]$.  Then~\cite[Thm. 0.1]{Abramovich-Gonality}, shows that 
        \[
            \deg(y) < \frac{1}{2}\gon_{\mathbb{Q}}(X_1(d N))\quad\textup{and}\quad
            \deg(x) < \frac{1}{2}\gon_{\mathbb{Q}}(X_1(N)).
        \]
        Thus $x$ and $y$ are sporadic by \cite[Prop. 2]{Frey-InfinitelyManyDegreed}.}
        
        Now we prove the claim.  Let $x \in X_1(N)$ be a $ \deg(x) \leq \frac{7}{1600}[\PSL_2(\Z) : \Gamma _1(N)]$. In particular, this implies $N>2$. Thus for any point $y \in X_1(dN)$ with $\pi(y)=x$ we have
        \begin{align*}
        \deg(y) &\leq \deg(x) \cdot \deg(X_1(dN) \rightarrow X_1(N))\\
        &< \dfrac{7}{1600} [\PSL_2(\Z) : \Gamma _1(N)] \cdot d^2 \prod_{p \mid d, \, p \nmid N} \left(1 - \frac{1}{p^2} \right) \, \, \, \, \, \text{(see Proposition \ref{prop:Degree})}\\
        &= \dfrac{7}{1600}\cdot \frac{1}{2}(dN) \prod_{p \mid dN} \left(1 + \frac{1}{p} \right) \varphi(dN) \\
        &=\dfrac{7}{1600} [\PSL_2(\Z) : \Gamma _1(dN)]. 
                                \end{align*}
\end{proof}

\begin{proof}[Proof of Theorem~\ref{thm:CM}]
Let $E$ be an elliptic curve with CM by an order $\mathcal{O}$ in an imaginary quadratic field $K$. Then $L :=  K(j(E))$ is the ring class field of $\mathcal{O}$ and $[L:K] = h(\mathcal{O})$, the class number of $\mathcal{O}$. (See \cite[Thms. 7.24 and 11.1]{Cox} for details.)
Let $\ell$ be a prime that splits in $K$ 
and satisfies
\[ \ell >  \left(\dfrac{6400}{7} \cdot \dfrac{h(\mathcal{O}) }{\# \mathcal{O} ^{\times}}\right) - 1.  \]
By~\cite[Thm. 6.2]{BourdonClark}, there is a point $P \in E$ of order $\ell$ with
\[ [L(\mathfrak{h}(P)):L] = \frac{\ell - 1}{\# \mathcal{O} ^{\times}} .\] Then for $x = [(E, P)] \in X_1(\ell)$,
\begin{align*}
\deg (x) = [\Q(j(E), \mathfrak{h}(P)): \Q] &\leq [K(j(E), \mathfrak{h}(P)): \Q] = [L(\mathfrak{h}(P)):\Q]
 = \frac{\ell - 1}{\# \mathcal{O} ^{\times}}  \cdot h(\mathcal{O}) \cdot2\\
&< (\ell - 1) \cdot \dfrac{7}{6400}(\ell + 1) \cdot 2  
= \dfrac{7}{1600} [\PSL_2(\Z) : \Gamma _1(\ell )].
\end{align*}
The result now follows from Lemma~\ref{lem:Lifting}.
\end{proof}

\begin{remark}
Note that none of the known non-cuspidal non-CM sporadic points satisfy the degree condition given in Lemma~\ref{lem:Lifting}.  Thus it is an interesting open question to determine 
whether there exist non-CM sporadic points with infinitely many sporadic lifts. If no such examples exist, then by Theorem \ref{thm:Main} there would be only finitely many non-CM sporadic points corresponding to $j$-invariants of bounded degree, assuming Conjecture \ref{conj:StrongUniformity}.
\end{remark}

\section{{Isolated} points with rational $j$-invariant}
\label{sec:RationaljInvariant}
  In this section, we study non-CM {isolated} points with rational $j$-invariant. {Our main result of this section (Theorem~\ref{thm:classification}) gives a classification of the non-cuspidal non-CM  {isolated} points on $X_1(n)$ with rational $j$-invariant.  We prove that they either arise from elliptic curves whose Galois representations are very special (and may not even exist), or they can be mapped to {isolated} points on $X_1(m)$ for an explicit set of integers $m$. }
 
  Later, we focus on sporadic points with rational $j$-invariant on $X_1(n)$ for particular values of $n$.  We show that if $n$ is prime (Proposition~\ref{prop:Primes}), is a power of $2$ (Proposition~\ref{prop:2primary}), or, conditionally on Sutherland~\cite[Conj. 1.1]{Sutherland-ComputingGaloisImages} and Zywina~\cite[Conj. 1.12]{Zywina-PossibleImages}), has $\min(\Supp(n))\geq 17$ (Proposition~\ref{prop:LevelDivisibleByLargePrime}), then any non-CM, non-cuspidal sporadic point with rational $j$-invariant has $j(x) = -7\cdot 11^3$.

\subsection{Classification of non-CM isolated points with rational $j$-invariant}\label{subsec:classification}
    \begin{theorem}\label{thm:classification}
    Let $x\in X_1(n)$ be a non-CM non-cuspidal {isolated} point with $j(x)\in \Q$.  Then one of the following holds:
    \begin{enumerate}
      \item There is an elliptic curve $E/\Q$ with $j(E) = j(x)$ and {a prime $\ell\in \Supp(n)$ such that either $\ell> 17, \ell\neq 37$ and $\rho_{E, \ell}$ is not surjective or $\ell = 17$ or $37$ and $\rho_{E, \ell}$ is a subgroup of the normalizer of a non-split Cartan {subgroup}.}  \label{case:LargePrimeNonSurjective} 
      \item There is an elliptic curve $E/\Q$ with $j(E) = j(x)$ and two distinct primes $\ell_1 > \ell_2 > 3$ {in $\Supp(n)$ }such that both $\rho_{E, \ell_1}$ and $\rho_{E, \ell_2}$ are not surjective.\label{case:TwoPrimesNonSurjective}
      \item There is an elliptic curve $E/\Q$ with $j(E) = j(x)$ and a prime $2<\ell \leq 37$ {in $\Supp(n)$} such that the $\ell$-adic Galois representation of $E$ has level greater than $169$.\label{case:LargeLevel}
      \item {There is a divisor of $n$ of the form $2^a3^bp^c$ such that the image of $x$ in $X_1(2^a3^bp^c)$ is {isolated} and such that $a\leq a_{p}$, $b\leq b_{p}$, $p^c\leq 169$ for one of the following values of $p$, $a_{p}$, $b_{p}$.}
      \[
	        \begin{tabular}{c||c|c|c|c|c|c|c}
                $p$ & 1& 5 & 7 & 11 & 13 & 17 & 37 \\
                \hline
                $a_{p}$ & 9 & 14 & 14 &  13 &  14 &  15 &  13  \\
                \hline
                $b_{p}$ & 5 & 6 & 7 & 6 &  7 &  5 &  8
	        \end{tabular}
      \]\label{case:SporadicLevels}
    \end{enumerate}
  \end{theorem}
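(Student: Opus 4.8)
The plan is to start with a non-CM non-cuspidal isolated point $x \in X_1(n)$ with $j(x) \in \Q$, fix an elliptic curve $E/\Q$ with $j(E) = j(x)$ (possible since $j(x) \in \Q$ and the twist does not affect the modular-curve structure up to the constants $c_x$ tracked in Lemma~\ref{lem:degree}), and then apply Theorem~\ref{thm:DegreeOfMapFixedjInvariant} (or rather Corollary~\ref{cor:SporadicPtsFixedjInvariant}) to push $x$ down to an isolated point on $X_1(\gcd(n, M_E(S_E)))$, where $S_E = S_{E/\Q}$. So the whole theorem reduces to controlling the set $S_E$ and the level $M_E(S_E)$ in terms of explicit numerics. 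The first step is therefore to recall that $S_E \subseteq \{2,3\} \cup \{5 : \rho_{E,5^\infty} \ne \GL_2(\Z_5)\} \cup \{\ell : \rho_{E,\ell^\infty}(\Gal_\Q) \not\supseteq \SL_2(\Z_\ell)\}$, and to invoke the known classification of mod-$\ell$ images over $\Q$: by Mazur, Bilu--Parent--Rebolledo, and the work surveyed by Zywina and Sutherland, for $\ell > 17$ with $\ell \ne 37$ surjectivity of $\rho_{E,\ell}$ is equivalent to $\rho_{E,\ell^\infty}(\Gal_\Q) \supseteq \SL_2(\Z_\ell)$ (via a ``vertical'' result that a surjective mod-$\ell$ image forces surjectivity $\ell$-adically, e.g. the Lang--Trotter/Serre argument bounding the $\ell$-adic level), and the only non-surjective possibility relevant for $\ell \in \{17, 37\}$ with potentially large level is the normalizer of a non-split Cartan. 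This is exactly what feeds cases \eqref{case:LargePrimeNonSurjective} and \eqref{case:TwoPrimesNonSurjective}: either some prime $\ell \in S_E \cap \Supp(n)$ is $> 17$ (giving \eqref{case:LargePrimeNonSurjective}, splitting off the $17,37$ non-split Cartan subcase), or at least two primes $\ell_1 > \ell_2 > 3$ in $S_E \cap \Supp(n)$ have non-surjective mod-$\ell$ representation (giving \eqref{case:TwoPrimesNonSurjective}).

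If neither of those holds, then $S_E \cap \Supp(n) \subseteq \{2, 3\} \cup \{p\}$ for a single prime $3 < p \le 37$ (or no such $p$ at all, the $p=1$ column). The next step is to bound $v_p(M_E(\{p\}))$ and $v_2(M_E(\{2\}))$, $v_3(M_E(\{3\}))$ using the explicit $\ell$-adic level bounds: for $p \notin \{2,3\}$ the $p$-adic level is at most $p^2 = 169$ unless it exceeds $169$, which is precisely case \eqref{case:LargeLevel} (with the clause ``$2 < \ell \le 37$'' coming from the fact that primes $\ge 41$ fall under case \eqref{case:LargePrimeNonSurjective}). For $p = 2, 3$ one uses the known tables of $2$-adic images (Rouse--Zureick-Brown, RSZB) and $3$-adic images (Rouse--Sutherland--Zureick-Brown, or the work of Sutherland--Zywina) to get the bounds $2^9$, $3^5$ in the $p=1$ column, and the slightly larger bounds $2^{a_p}$, $3^{b_p}$ in the other columns — these larger bounds arise from Proposition~\ref{prop:levelforcomp} / Proposition~\ref{prop:UniformLevelFiniteSetPrimes}\eqref{part:ExplicitBoundByInduction}, which says $v_\ell(M) \le \max(v_\ell(M_d(\{\ell\})), v_\ell(2\ell)) + v_\ell(\#\GL_2(\Z/\mm_{S-\{\ell\}}\Z))$: the ``$+\tau$'' correction term $v_2(\#\GL_2(\Z/3p\Z))$ or $v_3(\#\GL_2(\Z/2p\Z))$ is what pushes $a_1 = 9$ up to $a_5 = 14$, etc. One then assembles $M = M_E(S_E) \mid 2^{a_p} 3^{b_p} p^c$ with $p^c \le 169$, takes $d = \gcd(n, M)$, which divides $2^{a'}3^{b'}p^{c'}$ with $a' \le a_p$, $b' \le b_p$, $p^{c'} \le 169$, and concludes via Corollary~\ref{cor:SporadicPtsFixedjInvariant} that the image of $x$ in $X_1(2^{a'}3^{b'}p^{c'})$ is isolated — this is case \eqref{case:SporadicLevels}.

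The main obstacle is the bookkeeping in the last step: one has to verify, prime by prime for $p \in \{5,7,11,13,17,37\}$, that the exact exponents $a_p, b_p$ in the table are correct, which requires combining (i) the sharpest available uniform bound on the $2$-adic and $3$-adic levels over $\Q$ from the literature, (ii) the contribution $\tau_{S,\calE,\ell}$ from Proposition~\ref{prop:UniformLevelFiniteSetPrimes}\eqref{part:ExplicitBoundByInduction} with $S = \{2,3,p\}$, computing $v_2(\#\GL_2(\Z/3p\Z))$ and $v_3(\#\GL_2(\Z/2p\Z))$ explicitly, and (iii) checking that the $\max(\cdot, v_\ell(2\ell))$ term never dominates. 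A secondary subtlety is making sure the reduction to a single elliptic curve $E/\Q$ is legitimate even though $x$ need not itself ``be'' $[(E,P)]$ for that particular twist — here one uses that isolatedness is a property of the point on the $\Q$-curve $X_1(n)$ and that $S_E$ and $M_E(S_E)$ are twist-stable up to a factor of $2$ (Remark~\ref{Rmk:Leveljinvariant}), which is already absorbed into the generous powers of $2$ in the table. Finally, one should double-check the edge cases $\ell \in \{17, 37\}$: these are exactly the primes where a non-surjective but ``large-level-capable'' mod-$\ell$ image (non-split Cartan normalizer) exists, so they straddle cases \eqref{case:LargePrimeNonSurjective} and \eqref{case:LargeLevel}/\eqref{case:SporadicLevels}, and the statement is phrased to allow either outcome.
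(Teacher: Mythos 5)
Your architecture matches the paper's: fix a representative $E/\Q$ with $j(E)=j(x)$, strip away the primes of $\Supp(n)$ outside $S_E$ via Proposition~\ref{prop:EliminatingSurjectivePrimes} together with Theorem~\ref{thm:PushingForwardSporadicIsolated} (this is where cases \eqref{case:LargePrimeNonSurjective} and \eqref{case:TwoPrimesNonSurjective} come in as escape clauses), use the standing assumption that \eqref{case:LargeLevel} fails to cap the single--prime $\ell$-adic levels, and then assemble the $\mm_S$-adic level bound via Proposition~\ref{prop:UniformLevelFiniteSetPrimes}\eqref{part:ExplicitBoundByInduction}. That is exactly the route the paper takes.

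There is, however, a genuine gap in the final bookkeeping that makes your argument fall short of the exponents in the table. You consistently use the \emph{coarse} estimate
\[
v_{\ell}\bigl(M_1(S,\calE)\bigr) \le \max\bigl(v_{\ell}(M_1(\{\ell\},\calE)),\, v_{\ell}(2\ell)\bigr) + v_{\ell}\bigl(\#\GL_2(\Z/\mm_{S-\{\ell\}}\Z)\bigr),
\]
which is indeed tight enough for $p\in\{1,5,7,11,13\}$. But for $p=17$ this yields $a_{17}\le 5+4+9=18$ and $b_{17}\le 4+1+2=7$, and for $p=37$ it yields $a_{37}\le 5+4+5=14$ and $b_{37}\le 4+1+4=9$---none of which match the claimed $a_{17}=15$, $b_{17}=5$, $a_{37}=13$, $b_{37}=8$. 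The paper instead uses the actual definition of $\tau_{S,\calE,\ell}$, which involves $v_\ell(\#\im\rho_{E,\mm_{S-\{\ell\}}})$ rather than $v_\ell(\#\GL_2)$, and then invokes the fact that case \eqref{case:LargePrimeNonSurjective} has already been excluded: since $\im\rho_{E,p}$ is not contained in the normalizer of the non-split Cartan when $p\in\{17,37\}$, Zywina's classification (\cite[Thms.~1.10--1.11]{Zywina-PossibleImages}) pins down $\#\im\rho_{E,17}=2^6\cdot 17$ and $\#\im\rho_{E,37}=2^4\cdot 3^3\cdot 37$, which produces the smaller exponents. You flag $\ell\in\{17,37\}$ as edge cases that ``straddle'' cases, but you do not supply this refinement, and without it the theorem as stated is not proved. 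Two smaller inaccuracies: for $p\notin\{2,3\}$ the bound on the $p$-adic level is $p^c\le 169$ (e.g.\ $5^3=125$ is permitted), not ``at most $p^2=169$''; and the single-prime level bounds for $\ell\ge 3$ do not come from RSZB/Sutherland--Zywina but from the assumption that case \eqref{case:LargeLevel} fails---only the $\ell=2$ bound $2^5$ is pulled from \cite[Cor.~1.3]{RouseZureickBrown}.
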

{\begin{remark}
  This theorem also holds for $x$ a $\Pisolated$, $\AVisolated$, or sporadic point, respectively, at the expense of~\eqref{case:SporadicLevels} giving the statement that the image of $x$ is $\Pisolated$, $\AVisolated$, or sporadic, respectively.
\end{remark}}
\begin{remark}\label{rmk:UnlikelyGaloisRep}
  Each of cases $\eqref{case:LargePrimeNonSurjective}$, $\eqref{case:TwoPrimesNonSurjective}$, and $\eqref{case:LargeLevel}$ should be rare situations, if they occur at all. Indeed, the question of whether elliptic curves as in~\eqref{case:LargePrimeNonSurjective} exist is related to a question originally raised by Serre in 1972, and their non-existence has since been conjectured by Sutherland~\cite[Conj. 1.1]{Sutherland-ComputingGaloisImages} and Zywina~\cite[Conj. 1.12]{Zywina-PossibleImages}.  
  
Assuming $\eqref{case:LargePrimeNonSurjective}$ does not hold, elliptic curves as in~\eqref{case:TwoPrimesNonSurjective} correspond to points on finitely many modular curves of genus greater than $2$, so there are at worst finitely many $j$-invariants in this case \cite[Tables 6--14, Theorem 16A]{DanielsGonzalezJimenez}.   Additionally, there are no elliptic curves in the LMFDB database~\cite{LMFDB} as in~\eqref{case:TwoPrimesNonSurjective}, so in particular, any elliptic curve as in~\eqref{case:TwoPrimesNonSurjective} must have conductor larger than $400,000.$  (The Galois representation computations in LMFDB were carried out using the algorithm from~\cite{Sutherland-ComputingGaloisImages}.)  {If we do not assume~\eqref{case:LargePrimeNonSurjective} does not hold, then we must consider the case where $\ell_1>37$.  In this case the elliptic curves of interest no longer correspond to points on finitely many modular curves, but nevertheless, Lemos has shown that such elliptic curves do not exist, assuming that $\im \rho_{E,\ell_2}$ is contained in the normalizer of a split Cartan subgroup or in a Borel subgroup~\cites{Lemos-Borel, Lemos-CartanNS}.}
  
  Sutherland and Zywina's classification of modular curves of prime-power level with infinitely many points~\cite{SZ-primepower} shows that there are only finitely many rational $j$-invariants corresponding to elliptic curves as in~\eqref{case:LargeLevel}, and suggests that in fact they do not exist.  Table~\ref{table:SZsummary} gives, for each prime $\ell$, the maximal prime-power level for which there exists a modular curve of that level with infinitely many rational points.
  \begin{table}[h]
  \begin{tabular}{c||c|c|c|c|c|c|c}
    $\ell$ & 3 & 5 & 7 & 11 & 13 & 17 & 37 \\
    \hline
    max level & 27 & 25 & 7 &  11 &  13 &  1 &  1  \\
  \end{tabular}\caption{Maximal prime-power level for which there exists a modular curve with infinitely many rational points}\label{table:SZsummary}
\end{table}
Therefore, for $3\leq \ell\leq 37$, respectively, there are already only finitely many $j$-invariants of elliptic curves with an $\ell$-adic Galois representation of level at least $81$, $125$, $49$, $121$, $169$, $17$, or $37$.  Since such $j$-invariants are already rare, it seems reasonable to expect any such correspond to elliptic curves of $\ell$-adic level \emph{exactly}  $81$, $125$, $49$, $121$, $169$, $17$ and $37$, respectively.

This has been (conditionally) verified by Drew Sutherland in the cases $\ell = 17$ and $\ell = 37$.  For these primes, there are conjecturally only $4$ $j$-invariants corresponding to elliptic curves with non-surjective $\ell$-adic Galois representation: $-17\cdot 373^3/2^{17}, -17^2 \cdot 101^3/2, -7 \cdot 11^3,$ and $-7 \cdot 137^3 \cdot 2083^3$~\cite[Conj. 1.12]{Zywina-PossibleImages}.  For each of these $j$-invariants, Sutherland computed that the $\ell$-adic Galois representation is the full preimage of the mod $\ell$ representation, so the representations are indeed of level $\ell$ and not level $\ell^2$ \cite{Sutherland-PersonalCommunication}.\footnote{Sutherland used a generalization of the algorithm in~\cite{Sutherland-ComputingGaloisImages} to prove in each case the index of the mod-$\ell^2$ image is no smaller than that of the mod-$\ell$ image.  It then follows from~\cite[Lemma 3.7]{SZ-primepower} that  the $\ell$-adic image is the full preimage of the mod-$\ell$ image.}

\end{remark}

\begin{proof}
    Let $x\in X_1(n)$ be a non-cuspidal non-CM {isolated} point with $j(x)\in \Q$. Let $E$ be an elliptic curve over {$\Q$} with $j(E)=j(x)$. Assume that~\eqref{case:LargePrimeNonSurjective} does not hold, so in particular $E$ has surjective mod $\ell$ representation for every $\ell > 17$ and $\ell \neq 37$.  Thus Proposition~\ref{prop:EliminatingSurjectivePrimes} {and Theorem~\ref{thm:PushingForwardSporadicIsolated}} together imply that $x$ maps to an {isolated} point on $X_1(n')$ where $n'$ is the largest divisor of $n$ that is not divisible by any primes greater than $17$ except possibly $37$.

    Now assume further that~\eqref{case:TwoPrimesNonSurjective} does not hold, so there is at most one prime $p>3$ for which the $p$-adic Galois representation is not surjective.  If the $p$-adic Galois representation of $E$ is surjective for all primes larger than $3$, then we will abuse notation and set $p=1$.  Under these assumptions, additional applications of Proposition~\ref{prop:EliminatingSurjectivePrimes} {and Theorem~\ref{thm:PushingForwardSporadicIsolated}} show that $x$ maps to an {isolated} point on $X_1(n'')$ where $n''$ is a divisor of $n'$ with $\Supp(n'') \subset S := \{2,3,p\}$\footnote{When $p = 1$, we conflate the set $\{2,3,p\}$ with the set $\{2,3\}$.} and $p \in \{1,5,7,11,13,17,37\}$.  
    Furthermore, Corollary~\ref{cor:SporadicPtsFixedjInvariant} shows that $x$ maps to an {isolated} point on $X_1(\gcd(n'', M))$, where $M$ is the level of the {$\mm_S^{\infty}$} Galois representation of $E$.

    Now we will further assume that~\eqref{case:LargeLevel} does not hold. Let $\calE$ denote the set of all {non-CM} elliptic curves over $\Q$.   Proposition~\ref{prop:UniformLevelFiniteSetPrimes} states that there is an integer {$M_1(S, \calE)$} such that the level of the  {$\mm_S^{\infty}$} Galois representation of $E$ divides {$M_1(S, \calE)$ for all $E\in \calE$}.  We will show that {$M_1(\{2,3, p\}, \calE)$} divides $2^{a_{p}}3^{b_{p}}p^c$ for $p, a_{p}, b_{p}, c$ as in~\eqref{case:SporadicLevels}.

    By the assumption that~\eqref{case:LargeLevel} does not hold and~\cite[Corollary~1.3]{RouseZureickBrown}, we have the following values for the constant $M_1(\{\ell\}, \calE)$ from Proposition~\ref{prop:UniformLevelFiniteSetPrimes}.
   \begin{center}
        \begin{tabular}{c||c|c|c|c|c|c|c|c}
                 $\ell$ & 2 & 3 & 5 & 7 & 11 & 13 & 17 & 37 \\
            \hline
            $M_1(\{\ell\}, \calE)$ & $2^5$ & $3^4$ & $5^3$ & $7^2$ &  $11^2$ &  $13^2$ &  17 &  37  
        \end{tabular}
    \end{center}

    By Proposition~\ref{prop:UniformLevelFiniteSetPrimes}\eqref{part:ExplicitBoundByInduction},
    \[
        v_{\ell}(M_1(S, \calE)) \leq  \max(v_{\ell}(M_1(\{\ell\}, \calE)), v_{\ell}(2\ell)) + \sum_{\ell'\in \Snoell} v_{\ell}(\#\GL_2(\Z/\ell'\Z)).
    \]
    This upper bound combined with Table~\ref{table:SizeGL2} yields the desired divisibility except for the case where $p = 17$ or $p=37$.  

    \begin{table}[h]
      \begin{tabular}{c||c|c|c|c|c|c|c|c}
               $\ell$ & 2 & 3 & 5 & 7 & 11 & 13 & 17 & 37 \\
          \hline\\[-1em]
          $\#\GL_2(\Z/\ell\Z)$ & $2\cdot 3$ & $2^43$ & $2^53^1 5$ & $2^53^27$ &  $2^43^15^211$ &  $2^53^27^113$ &  $2^93^217$ &  $2^53^419^137$ 
      \end{tabular}
      \vspace{1em}\caption{Cardinality of $\GL_2(\Z/\ell\Z)$}\label{table:SizeGL2}
  \end{table}

    Let us consider the case that $p = 17$, {so $\rho_{E, 17}$ is not surjective}.   
    Since we are not in case~\eqref{case:LargePrimeNonSurjective}, we know $\im\rho_{E,17}$ is not contained in the normalizer of the non-split Cartan.  Thus~\cite[Thms. 1.10 and 1.11]{Zywina-PossibleImages} show that $\#\im\rho_{E,17} = 2^617$, so Proposition~\ref{prop:UniformLevelFiniteSetPrimes}\eqref{part:ExplicitBoundByInduction} implies that the level of the $\mm_S^{\infty}$ Galois representation divides $2^{15} 3^5 17$.

    The case when $p=37$ proceeds similarly.  In this case~\cite[Thms. 1.10 and 1.11]{Zywina-PossibleImages} show that $\#\im\rho_{E, 37} = 2^4 3^3 37$ and so Proposition~\ref{prop:UniformLevelFiniteSetPrimes}\eqref{part:ExplicitBoundByInduction} implies that the level of the $\mm_S^{\infty}$ Galois representation divides $2^{13} 3^8 37$.
  \end{proof}

\subsection{Rational $j$-invariants of non-CM non-cuspidal sporadic points on $X_1(n)$ for particular values of $n$}\label{subsec:SporadicClassification}
  
  \begin{prop}\label{prop:Primes}
    Fix a prime $\ell$.  If $x\in X_1(\ell)$ is a non-CM non-cuspidal sporadic point with $j(x)\in \Q$ then $\ell = 37$ and $j(x) = -7\cdot11^3$. 
  \end{prop}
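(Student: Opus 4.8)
The plan is: use that sporadic points are isolated to restrict $\ell$ through the local conditions defining $S_E$, dispose of the small primes by genus and gonality considerations, and for the remaining primes run through the classification of non-surjective mod-$\ell$ images of elliptic curves over $\Q$, computing the degrees of the closed points of $X_1(\ell)$ above $j(x)$ in each case.

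First I would fix an elliptic curve $E/\Q$ with $j(E)=j(x)$. Since sporadic points are isolated (Theorem~\ref{thm:FiniteIsolated}\eqref{case:CharacterizingFiniteDegreed}), Corollary~\ref{cor:SurjectivePrimes} forces $\ell\in S_E$; unwinding the definition of $S_E$ and using that for $\ell\ge 5$ one has $\SL_2(\Z_\ell)\subseteq\rho_{E,\ell^\infty}(\Gal_\Q)$ exactly when $\rho_{E,\ell}$ is surjective, this gives $\ell\in\{2,3\}$, or $\ell\ge 5$ with $\rho_{E,\ell}$ not surjective. If $\ell\in\{2,3,5,7\}$, then $X_1(\ell)$ has genus $0$ with a rational cusp, so $X_1(\ell)\cong\PP^1_\Q$ has no sporadic points; if $\ell\in\{11,13\}$, then $X_1(\ell)$ has genus at most $2$, so $\gon_\Q(X_1(\ell))=2$, by Mazur's theorem $X_1(\ell)(\Q)$ consists only of cusps, and the degree-$2$ map to $\PP^1$ together with Hilbert irreducibility yields infinitely many quadratic points, whence no non-cuspidal point is sporadic. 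So we may assume $\ell\ge 17$ and $\rho_{E,\ell}$ is not surjective.

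For $\ell\ge 17$ I would invoke the classification of subgroups of $\GL_2(\F_\ell)$ with surjective determinant: $\im\rho_{E,\ell}$ lies in a Borel subgroup, the normalizer of a split Cartan, the normalizer of a non-split Cartan, or has projective image $A_4$, $S_4$, or $A_5$. The exceptional case is known not to occur for non-CM $E/\Q$ with $\ell\ge 17$, and the split-Cartan-normalizer case is impossible for $\ell\ge 11$, $\ell\ne 13$ by Bilu--Parent--Rebolledo. For the non-split-Cartan-normalizer case I would use Serre's description of $\rho_{E,\ell}$ restricted to inertia at $\ell$: neither a non-central split semisimple element (eigenvalue ratio $\ne\pm1$) nor a nontrivial unipotent element lies in the normalizer of a non-split Cartan, so ordinary, multiplicative, potentially ordinary and potentially multiplicative reduction at $\ell$ are all incompatible with $\im\rho_{E,\ell}$ being contained in such a normalizer; hence $E$ has potentially good supersingular reduction at $\ell$, and then the level-$2$ fundamental character forces $\rho_{E,\ell}(I_\ell)$ to be a non-split Cartan, so $\im\rho_{E,\ell}$ is the full normalizer. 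Consequently the Cartan acts transitively on $E[\ell]\setminus\{O\}$, the fiber of $X_1(\ell)\to X_1(1)$ over $j(x)$ is a single closed point, and by Lemma~\ref{lem:degree} its degree equals $(\ell^2-1)/2$. This exceeds $\gon_\Q(X_1(\ell))\le\textup{genus}(X_1(\ell))+1=(\ell-5)(\ell-7)/24+1$ (the bound coming from Riemann--Roch and a rational cusp), and a point of degree at least the gonality is never sporadic --- apply Hilbert irreducibility to a gonal $\Q$-map --- so this case is excluded. Therefore $\im\rho_{E,\ell}$ lies in a Borel subgroup, i.e.\ $E$ has a rational $\ell$-isogeny, and by Mazur--Momose this forces $\ell\in\{17,37\}$ (for $\ell\in\{19,43,67,163\}$ the non-cuspidal points of $X_0(\ell)$ are CM), with $j(x)$ then in an explicit finite list.

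It remains to treat the finitely many pairs $(\ell,j(x))$ with $\ell\in\{17,37\}$. From the explicit mod-$\ell$ image one reads off the degrees of the closed points of $X_1(\ell)$ over each such $j$ --- by Lemma~\ref{lem:degree} these are the orbit sizes of $\rho_{E,\ell}(\Gal_\Q)$ on $(E[\ell]\setminus\{O\})/\{\pm1\}$, corrected by the factor $c_x$ --- and compares them with $\gon_\Q(X_1(17))=4$, $\gon_\Q(X_1(37))=18$ (Derickx--van Hoeij) and with the classification of torsion of elliptic curves over number fields of small degree; any point of degree at least the gonality fails to be sporadic. The only surviving case is $\ell=37$, $j=-7\cdot 11^3$: the curve with this $j$-invariant has a rational $37$-isogeny whose isogeny character has order $12$, so the point of order $37$ in the kernel gives a closed point of $X_1(37)$ of degree $6<18$, which is sporadic by Lemma~\ref{lem:Lifting} --- this is the sporadic point found by Derickx--Sutherland. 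I expect the most delicate step to be the non-split-Cartan case: one must make the inertia argument robust enough to conclude, uniformly in $\ell\ge 17$ and without invoking Serre's uniformity conjecture, that the image is the full normalizer (equivalently, that $E$ is potentially good supersingular at $\ell$), since this is exactly what makes the degree of the fiber over $j(x)$ large enough to preclude sporadicity.
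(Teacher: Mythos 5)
Your proof is correct and follows the same outer structure as the paper's: dispose of $\ell\le 13$ by genus/gonality, use Corollary~\ref{cor:SurjectivePrimes} to reduce to $\rho_{E,\ell}$ non-surjective, then run through the maximal subgroups of $\GL_2(\F_\ell)$ and finish the Borel case via Mazur's isogeny theorem and explicit data at $\ell=17,37$.

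The one place you genuinely diverge is the normalizer-of-non-split-Cartan case, and the comparison is worth noting. The paper simply cites Lozano-Robledo~\cite[Theorem 7.3]{LR-TorsionFieldOfDefn}, which gives $[\Q(P):\Q]\ge(\ell^2-1)/6$ for \emph{any} subgroup of the normalizer (no need to pin down the exact image), and then compares with $\gon_\Q(X_1(\ell))\le\mathrm{genus}\le(\ell^2-1)/24$. You instead re-derive a stronger statement from scratch: an inertia-at-$\ell$ argument using fundamental characters to rule out potentially ordinary/multiplicative reduction, concluding potentially supersingular reduction so that $\rho_{E,\ell}(I_\ell)$ contains the full non-split Cartan, then a complex-conjugation argument to upgrade to the full normalizer, giving the exact degree $(\ell^2-1)/2$. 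This is essentially Serre's argument for why the non-split Cartan normalizer image must be the full normalizer, and it does work (for $\ell\ge 5$ the relevant extension is tame, so Serre's fundamental character computation applies verbatim), but it is considerably more delicate than the citation the paper uses — you yourself flag it as the fragile step. The trade-off: your route is self-contained and yields a sharper degree, but Lozano-Robledo's bound already suffices for the gonality comparison and requires no reduction-type casework. For the endgame you also correctly identify the degree-$6$ point over $j=-7\cdot 11^3$ via the order-$12$ isogeny character and the degree-$18$ point over the other rational point of $X_0(37)$, matching the paper's use of~\cite[Table 5]{LR-TorsionFieldOfDefn}.
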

  \begin{proof}
    Let $x=[(E,P)]$ be a non-CM sporadic point on $X_1(\ell)$ with $j(E) \in \Q$.  We may assume $E$ is defined over $\Q$.   Note that $X_1(\ell)$ has infinitely many rational points for $\ell \leq 10$.  Further, $X_1(\ell)$ has gonality $2$ for $\ell=11, 13$, and no non-cuspidal rational points~\cite{Mazur}.  Hence if $x\in X_1(\ell)$ is a non-cuspidal {non-CM} sporadic point, $\ell > 13$.

    If the mod $\ell$ Galois representation of $E$ is surjective, then $x$ cannot be a sporadic point on $X_1(\ell)$ by Corollary \ref{cor:SurjectivePrimes}, so assume that $\rho_{E,\ell}$ is not surjective.  Then the {im $\rho_{E,\ell}$} is contained in a maximal subgroup, which can be an exceptional subgroup, a Borel subgroup or the normalizer of a (split or non-split) Cartan subgroup of $\GL_2(\F_{\ell})$ \cite[Section 2]{Serre-OpenImage}. We will analyze each case separately.
  
    In the case {where im $\rho_{E,\ell}$} is contained in the normalizer of the non-split Cartan subgroup, Lozano-Robledo \cite[Theorem 7.3]{LR-TorsionFieldOfDefn} shows that the degree of a field of definition of a point of order $\ell$ is greater than or equal to $(\ell^2-1)/6$. Since {$\ell>13$ we have
        \[
      \text{gon}_{\Q}(X_1(\ell)) \leq \text{genus}(X_1(\ell))  \leq \frac{1}{24}(\ell^2-1). 
    \] 
    Therefore $x$ cannot be sporadic in this case.}

    If {im $\rho_{E,\ell}$} is contained in the normalizer of the split Cartan subgroup, then by \cite{Split Cartan}, $\ell$ has to be less than or equal to $13$.  Similarly, if {im $\rho_{E,\ell}$} is one of the exceptional subgroups, then by \cite[Theorem 8.1]{LR-TorsionFieldOfDefn}, $\ell \leq 13$.

    If {im $\rho_{E,\ell}$} is contained in a Borel subgroup, then $E$ has a rational isogeny of degree $\ell$. By \cite{Mazur-Isogenies}, $\ell$ is one of the following primes: $2,3,5,7,11,13,17,37$. {Thus we need only consider $\ell = 17$ and $37$.} For $\ell=17$, \cite[Table 5]{LR-TorsionFieldOfDefn} shows that {$\deg(x)\geq 4$}. Since the gonality of $X_1(17)$ is also $4$, {$x$ cannot} be sporadic. 
  
    Finally when $\ell=37$, there are exactly two non-cuspidal points in $X_0(37)(\Q)$ \cite[Table 5]{LR-TorsionFieldOfDefn}. The one corresponding to an elliptic curve with $j$-invariant $-7\cdot 11^3$ gives a degree $6$ point on $X_1(37)$, {which is sporadic since $\gon_{\Q}X_1(37)=18$.} The other gives a point on $X_1(37)$ of degree 18, which is not sporadic.
  \end{proof} 

 \begin{prop}\label{prop:2primary}
   Let $s\geq 1$.  If $x\in X_1(2^s)$ is a non-cuspidal non-CM sporadic point, then $j(x) \notin \Q$.
  \end{prop}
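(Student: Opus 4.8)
The plan is to combine the classification of $2$-adic Galois images over $\Q$ of Rouse and Zureick-Brown \cite{RouseZureickBrown} with the degree-growth results of Section~\ref{sec:FixedjInvariant} to push $x$ down to one of the curves $X_1(2^t)$ with $t\le 5$, and then to treat the surviving cases $t=4,5$ directly.

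Suppose for contradiction that $x=[(E,P)]\in X_1(2^s)$ is a non-cuspidal non-CM sporadic point with $j(x)\in\Q$; since $j(x)\in\Q$ we may take $E$ defined over $\Q$, so that $P$ has order $2^s$. If $s\le 3$ then $X_1(2^s)$ has genus $0$ and a rational cusp, hence is isomorphic to $\PP^1_{\Q}$ and has no sporadic points by Theorem~\ref{thm:FiniteIsolated}; so assume $s\ge 4$. By \cite{RouseZureickBrown} we have $\im\rho_{E,2^\infty}=\pi^{-1}(\im\rho_{E,32})$, that is, the $2$-adic Galois representation of $E$ has level dividing $32$. Applying Proposition~\ref{prop:FixedSupportAndLevel} with $S=\{2\}$, $M=32$, $a=\gcd(2^s,32)$ and $b=2^s/a$ (note $\gcd(ab,M)=a$ and $\Supp(ab)\subset S$) gives $\deg(x)=\deg(f)\cdot\deg(f(x))$ for the natural map $f\colon X_1(2^s)\to X_1(2^t)$ with $t=\min(s,5)$. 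Hence by Theorem~\ref{thm:PushingForwardSporadicIsolated} the image $\bar{x}:=f(x)=[(E,2^{s-t}P)]$ is a sporadic point of $X_1(2^t)$, and it is non-cuspidal since $2^{s-t}P$ has order $2^t>1$. As $X_1(2^t)$ has genus $0$ for $t\le 3$, it remains to rule out non-cuspidal sporadic points on $X_1(16)$ and $X_1(32)$.

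For $X_1(16)$: this curve has genus $2$ and $\gon_{\Q}(X_1(16))=2$ (it is hyperelliptic with the canonical $g^1_2$ defined over $\Q$), so by Hilbert irreducibility, exactly as in the proof of Theorem~\ref{thm:FiniteIsolated}, it has infinitely many closed points of degree $2$; consequently every closed point of degree $\ge 2$ is non-sporadic. On the other hand, Mazur's theorem \cite{Mazur} shows that no elliptic curve over $\Q$ has a rational point of order $16$, so $X_1(16)$ has no non-cuspidal point of degree $1$. Thus $X_1(16)$ has no non-cuspidal sporadic point, which settles the case $t=4$. For $X_1(32)$: this curve has $\gon_{\Q}(X_1(32))=8$ by Derickx--van Hoeij \cite{DerickxVanHoeij}, so it carries a $\Q$-rational $g^1_8$ and hence, again by Hilbert irreducibility, infinitely many closed points of degree $8$; in particular every closed point of degree at least $8$ is non-sporadic. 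It therefore suffices to show $\deg(\bar{x})\ge 8$. Write $Q=2^{s-5}P\in E[32]$, a point of order $32$. By Lemma~\ref{lem:degree}, $\deg(\bar{x})=c_{\bar{x}}\,[\Q(Q):\Q]$ with $c_{\bar{x}}\in\{1,\tfrac12\}$, and $[\Q(Q):\Q]$ is the size of the orbit of $Q$ under $\im\rho_{E,32}$. Running over the finitely many possibilities for $\im\rho_{E,32}$ in the Rouse--Zureick-Brown tables, one checks that this orbit always has size at least $16$, so $\deg(\bar{x})\ge 8$ and $\bar{x}$ is non-sporadic, a contradiction; this settles the case $t=5$ and completes the proof.

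I expect the $X_1(32)$ step to be the main obstacle: the input "$[\Q(Q):\Q]\ge 16$ for every point $Q$ of order $32$ on an elliptic curve over $\Q$" (equivalently, $X_1(32)$ has no non-cuspidal point of degree $<8$ arising from an elliptic curve over $\Q$) is a finite but genuinely nontrivial computation against the Rouse--Zureick-Brown classification, since a crude index estimate only yields orbit size $\ge 8$, which is not enough on a curve whose $\Q$-gonality is exactly $8$. Everything else in the argument is formal, given the level bound of \cite{RouseZureickBrown} and the gonality of $X_1(32)$; note in particular that working with the single prime $S=\{2\}$ in Proposition~\ref{prop:FixedSupportAndLevel} sidesteps any concern about entanglement between the $2$-adic and odd-adic Galois representations inflating the relevant level.
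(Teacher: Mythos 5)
Your overall skeleton matches the paper's: use \cite{RouseZureickBrown} to bound the 2-adic level by $32$, invoke Proposition~\ref{prop:FixedSupportAndLevel} and Theorem~\ref{thm:PushingForwardSporadicIsolated} (as packaged in Corollary~\ref{cor:SporadicPtsFixedjInvariant}) to push $x$ down to $X_1(2^t)$ with $t\le 5$, dispose of $t\le 3$ by genus $0$, and of $t=4$ by noting $X_1(16)$ has gonality $2$ but no non-cuspidal rational points (the paper cites Levi; your citation of Mazur is equally valid). The $X_1(32)$ step is where you diverge, and that divergence is the substantive issue.

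The paper uses the sporadic hypothesis \emph{before} invoking any tables: if $x=[(E,P)]$ were sporadic, there could be only finitely many $j$-invariants of non-CM curves $E'/\Q$ with $\im\rho_{E',32}$ contained in a conjugate of $\im\rho_{E,32}$ (otherwise each such $E'$ would produce a point on $X_1(32)$ of degree $\le\deg(x)$, contradicting sporadicity). By the Rouse--Zureick-Brown classification, this forces $j(E)$ into an explicit list of exactly eight $2$-adic exceptional $j$-invariants, and for those eight curves a Magma computation of the $32$nd division polynomial shows $[\Q(Q):\Q]\ge 32$ for every point $Q$ of order $32$, so $\deg(x)\ge 16>8=\gon_{\Q}X_1(32)$, a contradiction. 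In contrast, you drop the reduction to eight $j$-invariants and instead assert, without verification, a \emph{uniform} bound $[\Q(Q):\Q]\ge 16$ over \emph{every} mod-$32$ image occurring in the RZB classification. That uniform claim is logically stronger than what the proposition asserts (it says $X_1(32)$ has no degree~$<8$ non-cuspidal point with rational non-CM $j$-invariant at all, sporadic or otherwise), it is not established by anything in your write-up or in the paper, and the crude index bound you mention only yields orbit size $\ge 8$. Unless you actually carry out that substantially larger computation (over the full list of $2$-adic images, not eight $j$-invariants), the $t=5$ case remains open. The paper's use of sporadicity to first restrict to the eight exceptional $j$-invariants is not a cosmetic shortcut; it is what makes the finite verification tractable and is the step your proposal is missing.
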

  \begin{proof}
    By~\cite[Cor. 1.3]{RouseZureickBrown}, the $2$-adic Galois representation of any non-CM elliptic curve over $\Q$ has level at most $32$.  Thus, by Proposition~\ref{prop:FixedSupportAndLevel} it suffices to show that $X_1(2^s)$ has no non-cuspidal non-CM sporadic points with rational $j$-invariant for $s \leq 5$.

    If $s=1,2$ or $3$, then modular curve $X_1(2^a)$ is isomorphic to $\PP^1_{\Q}$ and so has no sporadic points. When $s = 4$, the modular curve $X_1(16)$ has genus $2$ and hence gonality $2$ which implies that it has infinitely many points of degree $2$. Additionally, as first established by Levi \cite{Levi}, $X_1(16)$ has no {non-cuspidal} points over $\Q$ and so has no {non-cuspidal} sporadic points.

    Now we consider $X_1(32)$, which has gonality $8$ (see \cite[Table 1]{DerickxVanHoeij}). Let $x=[(E,P)]$ be a non-CM sporadic point on $X_1(32)$ with $j=j(E)\in \Q$. We may assume that $E$ is defined over $\Q$. 
    Since $x$ is a sporadic point, there are only finitely many points $y\in X_1(32)$ with $\deg(y) \leq \deg(x)$. 
    Since the degree of a point $y\in X_1(32)$ {can be calculated from} the mod $32$ Galois representation of an elliptic curve with $j$-invariant $j(y)$, 
    this implies that there are only finitely many $j$-invariants whose mod $32$ Galois representation is contained in a conjugate of {im $\rho_{E, 32}$}.
    By~\cite[Table~1]{RouseZureickBrown}, there are only eight non-CM $j$-invariants with this property: 
    \[
      2^{11}, \; 2^417^3, \; \frac{4097^3}{2^4},\;
\frac{257^3}{2^8}, \;
-\frac{857985^3}{62^8},\;
\frac{919425^3}{496^4},\;
-\frac{3\cdot18249920^3}{171^6},\;\textup{and}\;
-\frac{7\cdot1723187806080^3}{79^{16}}.
    \] 
    Using \texttt{Magma}, we compute the degree of each irreducible factor of $32^{\textup{nd}}$ division polynomial for each of these $j$-invariants and we find that the least degree of a field where a point of order $32$ is defined is $32$, hence there are no non-CM sporadic points on $X_1(32)$ with a rational $j$-invariant.
  \end{proof}
 
\begin{prop}\label{prop:LevelDivisibleByLargePrime}
    Let $n$ be a positive integer with $\min(\Supp(n)) \geq 17$.  Assume~\cite[Conj. 1.1]{Sutherland-ComputingGaloisImages} or~\cite[Conj. 1.12]{Zywina-PossibleImages}. If $x\in X_1(n)$ is a non-cuspidal non-CM sporadic point with $j(x) \in \Q$, then $37|n$ and $j(x) = -7\cdot 11^3$.
\end{prop}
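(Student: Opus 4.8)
The plan is to reduce to the classification in Theorem~\ref{thm:classification} and then apply Proposition~\ref{prop:Primes}. Fix an elliptic curve $E/\Q$ with $j(E)=j(x)$. Since $x$ is sporadic, the sporadic version of Theorem~\ref{thm:classification} (noted in the remark following it) applies, so one of the four cases \eqref{case:LargePrimeNonSurjective}--\eqref{case:SporadicLevels} holds. The bulk of the work is to show that, under the hypothesis $\min(\Supp(n))\ge 17$ together with the assumed uniformity conjecture, cases \eqref{case:LargePrimeNonSurjective}, \eqref{case:TwoPrimesNonSurjective}, and \eqref{case:LargeLevel} are impossible, so that \eqref{case:SporadicLevels} must hold.

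For case~\eqref{case:LargePrimeNonSurjective}: assuming the conjecture, $\rho_{E,\ell}$ is surjective for every non-CM $E/\Q$ and every prime $\ell\notin\{2,3,5,7,11,13,17,37\}$, and $\im\rho_{E,\ell}$ is never contained in the normalizer of a non-split Cartan subgroup for $\ell\in\{17,37\}$; this contradicts both alternatives in \eqref{case:LargePrimeNonSurjective}. For case~\eqref{case:TwoPrimesNonSurjective}: the two non-surjective primes $\ell_1>\ell_2>3$ lie in $\Supp(n)$, hence are $\ge 17$, and by the conjecture the only such primes admitting a non-surjective mod-$\ell$ image are $17$ and $37$, forcing $\ell_2=17$ and $\ell_1=37$; then, exactly as in the proof of Theorem~\ref{thm:classification}, $\im\rho_{E,17}$ and $\im\rho_{E,37}$ are each contained in a Borel subgroup (the non-split Cartan case having been excluded), so $j(E)$ would lie both in $\{-17\cdot 373^3/2^{17},\,-17^2\cdot 101^3/2\}$ and in $\{-7\cdot 11^3,\,-7\cdot 137^3\cdot 2083^3\}$, which is absurd. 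For case~\eqref{case:LargeLevel}: a prime $2<\ell\le 37$ dividing $n$ satisfies $\ell\in\{17,19,23,29,31,37\}$; for $\ell\in\{19,23,29,31\}$ the mod-$\ell$, and hence (as $\ell\ge 5$) the $\ell$-adic, representation is surjective, and for $\ell\in\{17,37\}$ the $\ell$-adic level is $1$ when $\rho_{E,\ell}$ is surjective and is exactly $\ell$ otherwise, by the conditional computation of Sutherland recorded in the remark after Theorem~\ref{thm:classification}. In every case the $\ell$-adic level is at most $37<169$, so \eqref{case:LargeLevel} cannot hold.

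Thus case~\eqref{case:SporadicLevels} holds: some divisor of $n$ of the form $2^a3^bp^c$ is such that the image of $x$ in $X_1(2^a3^bp^c)$ is sporadic, with $p\in\{1,5,7,11,13,17,37\}$ and $p^c\le 169$. Since $\gcd(n,\,2\cdot3\cdot5\cdot7\cdot11\cdot13)=1$, such a divisor must have $a=b=0$, and if $c\ge 1$ then $p\mid n$ forces $p\in\{17,37\}$; together with $p^c\le 169$ this leaves only the divisors $1$, $17$, and $37$. The divisor $1$ is excluded since $X_1(1)\cong\PP^1$ has no sporadic points, so the image $\overline{x}$ of $x$ in $X_1(17)$ or $X_1(37)$ is sporadic; it is moreover non-cuspidal, non-CM, and satisfies $j(\overline{x})=j(x)\in\Q$ because the map to $X_1(p)$ preserves the $j$-invariant. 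Proposition~\ref{prop:Primes} then yields $p=37$ and $j(x)=-7\cdot 11^3$, and since $p\mid n$ we conclude $37\mid n$, completing the proof.

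The main obstacle is making the case analysis watertight, and in particular checking that the conjectural input is calibrated exactly right: it must simultaneously rule out all exotic mod-$17$ and mod-$37$ images over $\Q$ (for \eqref{case:LargePrimeNonSurjective} and \eqref{case:TwoPrimesNonSurjective}) and pin the $17$- and $37$-adic levels down to at most $169$ (for \eqref{case:LargeLevel}). One also needs to confirm that passing from $x$ to its image in $X_1(17)$ or $X_1(37)$ introduces no issue with cusps or CM before invoking Proposition~\ref{prop:Primes}. The remaining steps are routine bookkeeping with the divisibility constraints in case~\eqref{case:SporadicLevels}.
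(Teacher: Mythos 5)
Your proof is correct and follows essentially the same route as the paper's: apply the sporadic version of Theorem~\ref{thm:classification}, rule out cases~\eqref{case:LargePrimeNonSurjective}--\eqref{case:LargeLevel} using the assumed conjecture together with the observations in Remark~\ref{rmk:UnlikelyGaloisRep}, observe that the constraint $\min(\Supp(n))\geq 17$ reduces case~\eqref{case:SporadicLevels} to a sporadic point on $X_1(17)$ or $X_1(37)$, and finish with Proposition~\ref{prop:Primes}. You spell out some steps more explicitly than the paper (the divisor count in case~\eqref{case:SporadicLevels}, the disjointness of the $17$- and $37$-Borel $j$-invariant sets in case~\eqref{case:TwoPrimesNonSurjective}, and the $\ell$-adic level bound in case~\eqref{case:LargeLevel}), but the underlying argument is identical.
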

\begin{proof}
    Let $E$ be an elliptic curve over $\Q$ with $j(E) = j(x)$.  We apply Theorem~\ref{thm:classification}.  By assumption and Remark~\ref{rmk:UnlikelyGaloisRep}, cases~\eqref{case:LargePrimeNonSurjective} and~\eqref{case:LargeLevel} of Theorem~\ref{thm:classification} do not occur.  Further, case~\eqref{case:TwoPrimesNonSurjective} only occurs if $17\cdot 37| n$ and $\im \rho_{E, 17}$ and $\im \rho_{E, 37}$ are both contained in Borel subgroups {(see proof of Proposition \ref{prop:Primes})}, which is impossible (see, e.g., {\cite[Table~4]{LR-TorsionFieldOfDefn}}).  

    Hence, we must be in case~\eqref{case:SporadicLevels} of Theorem~\ref{thm:classification}.  Since $\min(\Supp(n))\geq 17$, the only possible divisors of $n$ of the form $2^a3^bp^c$ (with $a, b, c, p$ as in Theorem~\ref{thm:classification}\eqref{case:SporadicLevels}) are $17$ or $37$.  Thus, for one of $\ell = 17$ or $37$ we must have $\ell | n$ and $x$ maps to a sporadic point on $X_1(\ell)$.
    Proposition~\ref{prop:Primes} then completes the proof.
\end{proof}

	\begin{bibdiv}
		\begin{biblist}

\bib{Abramovich-Gonality}{article}{
   author={Abramovich, Dan},
   title={A linear lower bound on the gonality of modular curves},
   journal={Internat. Math. Res. Notices},
   date={1996},
   number={20},
   pages={1005--1011},
   issn={1073-7928},
   review={\MR{1422373}},
}

\bib{Artin57}{book}{
   author={Artin, E.},
   title={Geometric algebra},
   publisher={Interscience Publishers, Inc., New York-London},
   date={1957},
   pages={x+214},
   review={\MR{0082463}},
}

\bib{Split Cartan}{article}{
   author={Bilu, Yuri},
   author={Parent, Pierre},
   author={Rebolledo, Marusia},
   title={Rational points on $X^+_0(p^r)$},
   language={English, with English and French summaries},
   journal={Ann. Inst. Fourier (Grenoble)},
   volume={63},
   date={2013},
   number={3},
   pages={957--984},
   issn={0373-0956},
   review={\MR{3137477}},
}

\bib{BourdonClark}{misc}{,
    author = {Bourdon, Abbey},
    author = {Clark, Pete L.},
    title = {Torsion points and Galois representations on CM elliptic curves},
    note = {Preprint, \tt{arXiv:1612.03229}}, 
}

\bib{Box}{misc}{
   author = {Box, Josha},
   title = {Quadratic points on modular curves with infinite Mordell--Weil group}
   note = {Preprint, \tt{arXiv:1906.05206}},
}

\bib{Bruin-Najman}{article}{
   author={Bruin, Peter},
   author={Najman, Filip},
   title={Hyperelliptic modular curves $X_0(n)$ and isogenies of elliptic
   curves over quadratic fields},
   journal={LMS J. Comput. Math.},
   volume={18},
   date={2015},
   number={1},
   pages={578--602},
   issn={1461-1570},
   review={\MR{3389884}},
   doi={10.1112/S1461157015000157},
}

\bib{CadoretTamagawa}{article}{
   author={Cadoret, Anna},
   author={Tamagawa, Akio},
   title={A uniform open image theorem for $\ell$-adic representations, II},
   journal={Duke Math. J.},
   volume={162},
   date={2013},
   number={12},
   pages={2301--2344},
   issn={0012-7094},
   review={\MR{3102481}},
}


\bib{CCS}{article}{
   author={Clark, Pete L.},
   author={Cook, Brian},
   author={Stankewicz, James},
   title={Torsion points on elliptic curves with complex multiplication
   (with an appendix by Alex Rice)},
   journal={Int. J. Number Theory},
   volume={9},
   date={2013},
   number={2},
   pages={447--479},
   issn={1793-0421},
   review={\MR{3005559}},
}

\bib{ClarkPollack}{article}{
   author={Clark, Pete L.},
   author={Pollack, Paul},
   title={Pursuing polynomial bounds on torsion},
   journal={Israel J. Math.},
   volume={227},
   date={2018},
   number={2},
   pages={889--909},
   issn={0021-2172},
   review={\MR{3846346}},
   doi={10.1007/s11856-018-1751-8},
}

\bib{CojocaruKani}{article}{
   author={Cojocaru, Alina Carmen},
   title={On the surjectivity of the Galois representations associated to
   non-CM elliptic curves},
   note={With an appendix by Ernst Kani},
   journal={Canad. Math. Bull.},
   volume={48},
   date={2005},
   number={1},
   pages={16--31},
   issn={0008-4395},
   review={\MR{2118760}},
   doi={10.4153/CMB-2005-002-x},
}

\bib{Cox}{book}{
   author={Cox, David A.},
   title={Primes of the form $x^2 + ny^2$},
   series={Pure and Applied Mathematics (Hoboken)},
   edition={2},
   note={Fermat, class field theory, and complex multiplication},
   publisher={John Wiley \& Sons, Inc., Hoboken, NJ},
   date={2013},
   pages={xviii+356},
   isbn={978-1-118-39018-4},
   review={\MR{3236783}},
}

\bib{DanielsGonzalezJimenez}{article}{
    author={Daniels, Harris},
    author={Gonz\'{a}lez-Jim\'{e}nez, Enrique},
    title={Serre's constant of elliptic curves over the rationals},
    note = {Preprint, \tt{arXiv:1812.04133}},
 }

\bib{DeligneRapoport}{article}{
   author={Deligne, P.},
   author={Rapoport, M.},
   title={Les sch\'emas de modules de courbes elliptiques},
   language={French},
   conference={
      title={Modular functions of one variable, II},
      address={Proc. Internat. Summer School, Univ. Antwerp, Antwerp},
      date={1972},
   },
   book={
      publisher={Springer, Berlin},
   },
   date={1973},
   pages={143--316. Lecture Notes in Math., Vol. 349},
   review={\MR{0337993}},
}

\bib{DEvHZB}{misc}{
  author={Derickx, Maarten},
  author={Etropolski, Anastassia},
  author={van Hoeij, Mark},
  author={Morrow, Jackson S.},
  author={Zureick-Brown, David},
  title={Sporadic cubic torsion},
  note = {In preparation},
}

\bib{DerickxVanHoeij}{article}{
   author={Derickx, Maarten},
   author={van Hoeij, Mark},
   title={Gonality of the modular curve $X_1(N)$},
   journal={J. Algebra},
   volume={417},
   date={2014},
   pages={52--71},
   issn={0021-8693},
   review={\MR{3244637}},
}

\bib{DerickxSutherland}{article}{
   author={Derickx, Maarten},
   author={Sutherland, Andrew V.},
   title={Torsion subgroups of elliptic curves over quintic and sextic
   number fields},
   journal={Proc. Amer. Math. Soc.},
   volume={145},
   date={2017},
   number={10},
   pages={4233--4245},
   issn={0002-9939},
   review={\MR{3690609}},
}

\bib{DiamondShurman}{book}{
   author={Diamond, Fred},
   author={Shurman, Jerry},
   title={A first course in modular forms},
   series={Graduate Texts in Mathematics},
   volume={228},
   publisher={Springer-Verlag, New York},
   date={2005},
   pages={xvi+436},
   isbn={0-387-23229-X},
   review={\MR{2112196}},
}

\bib{Faltings}{article}{
author={Faltings, Gerd},
   title={The general case of S. Lang's conjecture},
   conference={
      title={Barsotti Symposium in Algebraic Geometry},
      address={Abano Terme},
      date={1991},
   },
   book={
      series={Perspect. Math.},
      volume={15},
      publisher={Academic Press, San Diego, CA},
   },
   date={1994},
   pages={175--182},
   review={\MR{1307396}},
}

\bib{Frey-InfinitelyManyDegreed}{article}{
   author={Frey, Gerhard},
   title={Curves with infinitely many points of fixed degree},
   journal={Israel J. Math.},
   volume={85},
   date={1994},
   number={1-3},
   pages={79--83},
   issn={0021-2172},
   review={\MR{1264340}},
}

\bib{Goursat89}{article}{
   author={Goursat, Edouard},
   title={Sur les substitutions orthogonales et les divisions r\'eguli\`eres de
   l'espace},
   language={French},
   journal={Ann. Sci. \'Ecole Norm. Sup. (3)},
   volume={6},
   date={1889},
   pages={9--102},
   issn={0012-9593},
   review={\MR{1508819}},
}

\bib{Gunther-Morrow}{misc}{
    author={Gunther, Joseph},
    author={Morrow, Jackson S.},
    title={Irrational points on random hyperelliptic curves},
    note = {Preprint, \tt{arXiv:1709.02041}},
 }

 \bib{Jones09v2}{article}{
    author={Jones, Nathan},
    title={A bound for the torsion conductor of a non-CM elliptic curve},
    journal={Proc. Amer. Math. Soc.},
    volume={137},
    date={2009},
    number={1},
    pages={37--43},
    issn={0002-9939},
    review={\MR{2439422}},
 }

\bib{Jones10}{article}{
   author={Jones, Nathan},
   title={Almost all elliptic curves are Serre curves},
   journal={Trans. Amer. Math. Soc.},
   volume={362},
   date={2010},
   number={3},
   pages={1547--1570},
   issn={0002-9947},
   review={\MR{2563740}},
}

\bib{Kamienny92}{article}{
   author={Kamienny, S.},
   title={Torsion points on elliptic curves and $q$-coefficients of modular
   forms},
   journal={Invent. Math.},
   volume={109},
   date={1992},
   number={2},
   pages={221--229},
   issn={0020-9910},
   review={\MR{1172689}},
}

\bib{KenkuMomoseQuadratic}{article}{
   author={Kenku, M. A.},
   author={Momose, F.},
   title={Torsion points on elliptic curves defined over quadratic fields},
   journal={Nagoya Math. J.},
   volume={109},
   date={1988},
   pages={125--149},
   issn={0027-7630},
   review={\MR{931956}},
   doi={10.1017/S0027763000002816},
}

\bib{LMFDB}{webpage}{
  author       = {The {LMFDB Collaboration}},
  title           =  {The L-functions and Modular Forms Database},
  note = {\url{http://www.lmfdb.org}},
  date          = {2013},
  label = {LMFDB},
}

\bib{Lang-algebra}{book}{
   author={Lang, Serge},
   title={Algebra},
   series={Graduate Texts in Mathematics},
   volume={211},
   edition={3},
   publisher={Springer-Verlag, New York},
   date={2002},
   pages={xvi+914},
   isbn={0-387-95385-X},
   review={\MR{1878556}},
}

\bib{LangTrotter-FrobeniusDistributions}{book}{
   author={Lang, Serge},
   author={Trotter, Hale},
   title={Frobenius distributions in ${\rm GL}_{2}$-extensions},
   series={Lecture Notes in Mathematics, Vol. 504},
   note={Distribution of Frobenius automorphisms in ${\rm
   GL}_{2}$-extensions of the rational numbers},
   publisher={Springer-Verlag, Berlin-New York},
   date={1976},
   pages={iii+274},
   review={\MR{0568299}},
}

\bib{Lemos-Borel}{article}{
   author={Lemos, Pedro},
   title={Serre's uniformity conjecture for elliptic curves with rational
   cyclic isogenies},
   journal={Trans. Amer. Math. Soc.},
   volume={371},
   date={2019},
   number={1},
   pages={137--146},
   issn={0002-9947},
   review={\MR{3885140}},
   doi={10.1090/tran/7198},
}

\bib{Lemos-CartanNS}{article}{
   author={Lemos, Pedro},
   title={Some cases of Serre's uniformity problem},
   journal={Math. Z.},
   volume={292},
   date={2019},
   number={1-2},
   pages={739--762},
   issn={0025-5874},
   review={\MR{3968924}},
   doi={10.1007/s00209-018-2189-8},
}

\bib{Levi}{article}{
   author={Levi, Beppo},
   title={Saggio per una teoria aritmetica delle forme cubiche ternarie},
   journal={Atti della Reale Acc. Sci. di Torino},
   volume={43},
   date={1908},
   pages={99--120, 413--434, 672--681},
}

\bib{LR-TorsionFieldOfDefn}{article}{
   author={Lozano-Robledo, \'Alvaro},
   title={On the field of definition of $p$-torsion points on elliptic
   curves over the rationals},
   journal={Math. Ann.},
   volume={357},
   date={2013},
   number={1},
   pages={279--305},
   issn={0025-5831},
   review={\MR{3084348}},
}
	
\bib{Mazur-Isogenies}{article}{
   author={Mazur, B.},
   title={Rational isogenies of prime degree (with an appendix by D.
   Goldfeld)},
   journal={Invent. Math.},
   volume={44},
   date={1978},
   number={2},
   pages={129--162},
   issn={0020-9910},
   review={\MR{482230}},
}

\bib{Mazur}{article}{
   author={Mazur, B.},
   title={Modular curves and the Eisenstein ideal},
   journal={Inst. Hautes \'Etudes Sci. Publ. Math.},
   number={47},
   date={1977},
   pages={33--186 (1978)},
   issn={0073-8301},
   review={\MR{488287}},
}
		
\bib{Merel}{article}{
   author={Merel, Lo\"\i c},
   title={Bornes pour la torsion des courbes elliptiques sur les corps de
   nombres},
   language={French},
   journal={Invent. Math.},
   volume={124},
   date={1996},
   number={1-3},
   pages={437--449},
   issn={0020-9910},
   review={\MR{1369424}},
}

\bib{Morrow}{article}{
    author={Morrow, Jackson S.},
    title={Composite images of Galois for elliptic curves over $\Q$ and Entanglement fields},
   journal={Math. Comp.},
   volume={88},
   date={2019},
   number={319},
   pages={2389--2421},
   issn={0025-5718},
   review={\MR{3957898}},

 }

 \bib{Najman}{article}{
    author={Najman, Filip},
    title={Torsion of rational elliptic curves over cubic fields and sporadic
    points on $X_1(n)$},
    journal={Math. Res. Lett.},
    volume={23},
    date={2016},
    number={1},
    pages={245--272},
    issn={1073-2780},
    review={\MR{3512885}},
 }

\bib{RibesZalesskii}{book}{
   author={Ribes, Luis},
   author={Zalesskii, Pavel},
   title={Profinite groups},
   series={Ergebnisse der Mathematik und ihrer Grenzgebiete. 3. Folge. A
   Series of Modern Surveys in Mathematics [Results in Mathematics and
   Related Areas. 3rd Series. A Series of Modern Surveys in Mathematics]},
   volume={40},
   edition={2},
   publisher={Springer-Verlag, Berlin},
   date={2010},
   pages={xvi+464},
   isbn={978-3-642-01641-7},
   review={\MR{2599132}},
}
		
\bib{RouseZureickBrown}{article}{
   author={Rouse, Jeremy},
   author={Zureick-Brown, David},
   title={Elliptic curves over $\mathbb Q$ and 2-adic images of Galois},
   journal={Res. Number Theory},
   volume={1},
   date={2015},
   pages={Art. 12, 34},
   issn={2363-9555},
   review={\MR{3500996}},
}

\bib{Serre-OpenImage}{article}{
   author={Serre, Jean-Pierre},
   title={Propri\'et\'es galoisiennes des points d'ordre fini des courbes
   elliptiques},
   language={French},
   journal={Invent. Math.},
   volume={15},
   date={1972},
   number={4},
   pages={259--331},
   issn={0020-9910},
   review={\MR{0387283}},
}

\bib{Serre-37}{article}{
   author={Serre, Jean-Pierre},
   title={Quelques applications du th\'eor\`eme de densit\'e de Chebotarev},
   language={French},
   journal={Inst. Hautes \'Etudes Sci. Publ. Math.},
   number={54},
   date={1981},
   pages={323--401},
   issn={0073-8301},
   review={\MR{644559}},
}

\bib{Serre-MWLectures}{book}{
   author={Serre, Jean-Pierre},
   title={Lectures on the Mordell-Weil theorem},
   series={Aspects of Mathematics},
   edition={3},
   note={Translated from the French and edited by Martin Brown from notes by
   Michel Waldschmidt;
   With a foreword by Brown and Serre},
   publisher={Friedr. Vieweg \& Sohn, Braunschweig},
   date={1997},
   pages={x+218},
   isbn={3-528-28968-6},
   review={\MR{1757192}},
   doi={10.1007/978-3-663-10632-6},
}

\bib{Serre-Abelianladic}{book}{
  author={Serre, Jean-Pierre},
  title={Abelian $l$-adic representations and elliptic curves},
  series={Research Notes in Mathematics},
  volume={7},
  note={With the collaboration of Willem Kuyk and John Labute;
  Revised reprint of the 1968 original},
  publisher={A K Peters, Ltd., Wellesley, MA},
  date={1998},
  pages={199},
  isbn={1-56881-077-6},
  review={\MR{1484415}},
}

\bib{Siksek}{article}{
   author={Siksek, Samir},
   title={Chabauty for symmetric powers of curves},
   journal={Algebra Number Theory},
   volume={3},
   date={2009},
   number={2},
   pages={209--236},
   issn={1937-0652},
   review={\MR{2491943}},
   doi={10.2140/ant.2009.3.209},
}

\bib{Sutherland-ComputingGaloisImages}{article}{
   author={Sutherland, Andrew V.},
   title={Computing images of Galois representations attached to elliptic
   curves},
   journal={Forum Math. Sigma},
   volume={4},
   date={2016},
   pages={e4, 79},
   issn={2050-5094},
   review={\MR{3482279}},
}

\bib{Sutherland-PersonalCommunication}{misc}{
  author={Sutherland, Andrew V.},
  date={2017},
  note = {Personal communication}
}

\bib{Sutherland-Survey}{article}{
   author={Sutherland, Andrew V.},
   title={Torsion subgroups of elliptic curves over number fields},
   note={Preprint, available at \url{https://math.mit.edu/~drew/MazursTheoremSubsequentResults.pdf}},

}

\bib{SZ-primepower}{article}{
   author={Sutherland, Andrew V.},
   author={Zywina, David},
   title={Modular curves of prime-power level with infinitely many rational
   points},
   journal={Algebra Number Theory},
   volume={11},
   date={2017},
   number={5},
   pages={1199--1229},
   issn={1937-0652},
   review={\MR{3671434}},
   doi={10.2140/ant.2017.11.1199},
}

\bib{Suzuki}{book}{
   author={Suzuki, Michio},
   title={Group Theory. I.},
   series={Grundlehren der Mathematischen Wissenschaften [Fundamental Principles of Mathematical Sciences]},
   volume={247},
   publisher={Springer-Verlag, Berlin-New York},
   date={1982},
   pages={},
   isbn={3-540-10915-3},
   review={\MR{0648772}},
}

\bib{vanHoeij}{article}{
   author={van Hoeij, Mark},
   title={Low degree places on the modular curve $X_1(N)$},
   note={Preprint, \tt{arXiv:1202.4355}},
}

\bib{Zywina-PossibleImages}{article}{,
   author = {{Zywina}, D.},
    title = {On the possible images of the mod $\ell$ representations associated to elliptic curves over $\Q$},
  note = {Preprint, \tt{arXiv:1508.07660}},
}

		\end{biblist}
	\end{bibdiv}

\end{document}